\renewcommand{\PrelimText}{\footnotesize[\,Version: \texttt{\jobname.tex}\hfill \today\ at \thistime\,]}
      \theoremstyle{plain}
      \newtheorem{Thm}{Theorem}[section]
      \newtheorem{lem}[Thm]{Lemma}
      \newtheorem{Lem}[Thm]{Lemma}
      \theoremstyle{definition}
      \newtheorem{defi}[Thm]{Definition}
      \theoremstyle{remark}
      \newtheorem{Rem}[Thm]{Remark}
   \theoremstyle{definition}\newtheorem{Example}[Thm]{Example}
\renewcommand{\[}{\begin{eqnarray*}}\renewcommand{\]}{\end{eqnarray*}}
\newcommand{\la}{\begin{eqnarray}}\newcommand{\al}{\end{eqnarray}}
\let\oldsqrt\sqrt
\def\sqrt{\mathpalette\DHLhksqrt}
\def\DHLhksqrt#1#2{%
\setbox0=\hbox{$#1\oldsqrt{#2\,}$}\dimen0=\ht0
\advance\dimen0-0.2\ht0
\setbox2=\hbox{\vrule height\ht0 depth -\dimen0}%
{\box0\lower0.4pt\box2}}
\newcommand{\sign}{\mathrm{sgn}}
\newcommand{\sgn}{\mathrm{sgn}} 
\newcommand{\abs}[1]{\left|#1\right|}
\newcommand{\C}{{\mathbb C}}     %
\newcommand{\N}{{\mathbb N}}     %
\newcommand{\R}{{\mathbb R}}     %
\newcommand{\Z}{{\mathbb Z}}%
\renewcommand{\epsilon}{\varepsilon}
\newcommand{\eps}{\varepsilon}
\renewcommand{\rho}{\varrho}
\newcommand{\E}{{\mathbb E}}
\newcommand{\dd}{{\,\mathrm{d}}}
\newcommand{\Prob}{\mbox{\rm Prob}}
\DeclareMathOperator*{\bigconv}{\mbox{\LARGE$\ast$}}
\newcommand{\cC}{{\mathcal C}}
\newcommand{\cF}{{\mathcal F}}
\newcommand{\cP}{{\mathcal P}}
\begin{document}

\author[Mattner]{Lutz Mattner}
\address{Universit\"at Trier, Fachbereich IV -- Mathematik, 54286~Trier, Germany}
\email{mattner@uni-trier.de}

\author[Shevtsova]{Irina Shevtsova}
\address{Lomonosov Moscow State University,
Faculty of Computational Mathematics and Cybernetics,
and 
Russian Academy of Sciences, 
Institute for Informatics Problems of the Federal Research Scientific Center ``Computer Science and Control'', 
Moscow, Russia
}

\email{ishevtsova@cs.msu.ru}

\title[Berry--Esseen for integrals of smooth functions]{An optimal Berry--Esseen type theorem \\ 
for integrals of smooth functions}

\begin{abstract}
We prove a Berry--Esseen type inequality for approximating expectations of sufficiently smooth functions $f$, 
like $f=|\cdot|^3$, with respect to standardized convolutions of laws $P_1,\ldots, P_n$ on the real line
by corresponding expectations based on symmetric two-point laws $Q_1,\ldots,Q_n$ isoscedastic to the $P_i$.
Equality is attained for every possible constellation of the Lipschitz constant $\|f''\|^{}_{\mathrm{L}}$
and the variances and the third centred absolute moments of the $P_i$. The error bound is strictly  smaller than 
$\frac 16$~times the Lyapunov ratio times $\|f''\|^{}_{\mathrm{L}}$, and tends to zero also if $n$ is fixed and the
third standardized  absolute moments of the $P_i$ tend to one.

In the homoscedastic case of equal variances of the $P_i$, and hence in particular in the
i.i.d.~case, the approximating law is a standardized symmetric binomial one.

The  inequality is strong enough to yield for some constellations, in particular in the i.i.d.~case
with $n$ large enough given  the standardized third absolute moment of $P_1$,
an improvement of a more classical and already optimal
Berry--Esseen type inequality of Tyurin~(2009).

Auxiliary results presented include  some inequalities either purely analytical or
concerning Zolotarev's $\zeta$-metrics, and some binomial moment calculations.
\end{abstract}

% AMS subject classifications (used in AMS journals)
\subjclass[2010]{Primary  60E15; Secondary 60F05}

% AMS keywords (used in AMS journals)
\keywords{Approximation by Rademacher averages, binomial and normal approximation, extreme point methods, 
moment and characteristic function inequalities, osculatory inequalities, penultimate approximation,
Zolotarev's $\zeta$-metrics.}

% acknowledge support, etc
\thanks{The work was partially supported by the  Russian Foundation for Basic Research (projects 15-07-02984-a and 16-31-60110-mol\_a\_dk) and by the Ministry for Education and Science of Russia (grant No.~MD-2116.2017.1).}

% dedication
%    \dedicatory{Dedicated to Professor Donald Knuth on the occasion
%      of his $100$th birthday}

% \date{\today}
\maketitle
\enlargethispage{1.7\baselineskip}  % To have table of contents completely on page 1
\tableofcontents

\section{Introduction and main results}
\subsection{Introduction}                           \label{Subs:Intro}

In statistics and various other applications of probability theory, inconvenient or even intractable distributions are often approximated by relying on some limit theorem. The most popular among such approximations is the \textit{normal} approximation to distributions of sums of a large number $n$ of independent or weakly dependent random variables with appropriate mean and 
variance, which is based on the central limit theorem. However, to use effectively any approximation in practice, one needs an explicit and convenient estimate of its accuracy, and such an estimate may be not as sharp as one might wish. For the purpose of improving the error-bounds one can introduce further terms into the approximating law (leading to the so-called asymptotic expansions) and reach arbitrarily high accuracy, but this requires some additional assumptions on the original distribution. For example, in the case of approximating distributions of sums of independent random variables these conditions are: (i) finiteness of the higher-order moments of the random summands and (ii) some kind of smoothness either of the distributions of the random summands or of the metric under consideration.
 
On the other hand, from the general theory of summation of independent random variables it follows that approximation by infinitely divisible distributions may be more effective even without any moment conditions due to the better error-bound, which is, in the i.i.d.~case and for the Kolmogorov metric, of the order $O(n^{-2/3})$~\cite{Arak1981,Arak1982,Arak_Zaitsev}, rather than $O(n^{-1/2})$ as usual in the CLT, but such an approximation may be  inconvenient, because the sequence of \emph{penultimate} approximating infinitely divisible distributions that guarantees the rate $O(n^{-2/3})$ may be very complicated and usually is not given in an explicit form. Let us recall that an approximation depending on the sample size $n$ not only through location-scale parameters and, in the present context, usually being merely asymptotically normal itself, is sometimes called a \emph{penultimate} approximation, a terminology apparently first introduced in extreme value theory~\cite{Fisher_Tippett}. A recent example of an explicit and convenient penultimate approximation even in the total variation metric, but only for distributions with an absolutely continuous part and finite fourth-order moments, can be found  in~\cite{Boutsikas2015}, where an infinitely divisible shifted-gamma approximation with matching first three moments was proved to have the rate $O(n^{-1})$.

In this paper, as an alternative to the normal approximation, we propose and evaluate another penultimate approximation only assuming finiteness of the third-order moments. Our approximation is in the i.i.d.~case of the same rate $O(n^{-1/2})$ as 
the normal approximation, 
but its error bound depends more favourably on the standardized third absolute moments of the convolved distributions, 
and can in fact tend to zero even for $n$ fixed. 
As the approximating distribution we take the $n$-fold convolution of the symmetric 
two-point laws with the same variances as the original laws, which is asymptotically normal itself.
Thus, in a terminology used for example in~\cite[chapter 4]{LedouxTalagrand1991},  
our approximations are laws of Rademacher averages rather than Gaussian laws.

As a corollary, for the approximation of a standardized characteristic function by  its Taylor polynomial of 
degree~$2$, a new explicit and asymptotically exact error-bound given the absolute third-order moment is
obtained in~\eqref{Eq:IneqCh.F.Taylor(n=1)} below.

Moreover, trivially using the triangle inequality together with the asymptotic normality of the penultimate distribution, which is
valid to a higher order due to vanishing third cumulants and due to 
the smoothness of the metric under consideration, we obtain a sharp upper bound for the accuracy of the normal approximation which improves an already optimal estimate due to Tyurin~\cite{Tyurin2009arxiv,Tyurin2009DAN,Tyurin2010TVP} for some constellations 
(see Theorems~\ref{Thm:Main},~\ref{Thm:Main(noniid)} below). This improvement is possible due to a more favourable dependence of our estimate 
on the moments of the convolved distributions. 

First attempts at a more effective use of the information on the first three moments of the convolved distributions in the estimates of the accuracy of the normal approximation for the Kolmogorov metric were undertaken by Ikeda~\cite{Ikeda1959} and Zahl~\cite{Zahl1963},  followed by Prawitz~\cite{Prawitz1975} and Bentkus~\cite{Bentkus1994}  
(for a detailed review see~\cite[Sections~2.1.1 and~2.4]{Shevtsova2016}). The problem of  optimal use  of moment-type information in the estimates of the accuracy of the normal approximation was posed 
in~\cite{Shevtsova2012DAN1,Shevtsova2012DAN2, Shevtsova2012Debrecen,Shevtsova2012TVP} 
where it was called the problem of \emph{optimization of the structure} of convergence rate estimates and where this problem was partially solved for estimates of the Kolmogorov and the weighted uniform metrics.

To be more precise, we should introduce some notation. Let $\Prob(\R)$ stand for the set of all probability distributions on the real line, $\Prob_s(\R) \coloneqq \{P\in \Prob(\R) : \nu_s(P)\coloneqq \int|x|^s\dd P(x)<\infty\}$ for $s>0,$  $\sigma^2(P) \coloneqq \inf\{ \int(x-a)^2\dd P(x):a\in\R\}$ for $P\in\Prob(\R),$ $\cP_3 \coloneqq \{P\in\Prob_3(\R): \sigma(P)>0\}$, $\mu_k(P) \coloneqq \int x^k\dd P(x)$ for $P\in\Prob_k(\R)$ with $k\in\N$, $\mu(\cdot)\coloneqq \mu_1(\cdot)$. 
We write $\mathrm{N}_{\sigma}$ for the centred normal law on $\R$ with standard deviation
$\sigma\in[0,\infty[$, and $\mathrm{N}\coloneqq\mathrm{N}_{1}$ for the standard normal law  with
distribution function $\Phi$. The one-point law concentrated at $a\in\R$ is denoted by $\delta_a$. 
If $P\in\cP_3$, then we let $\widetilde{P}$ denote its standardization, that is, the image of $P$ under the map $x\mapsto (x-\mu(P))/\sigma(P)$, and
\begin{eqnarray*}
 \varrho(P) &:=&\nu_3\left(\widetilde{P} \right)
 \,\ = \,\ \int \left| \frac{x-\mu(P)}{\sigma(P)}\ \right|^3
 \dd P(x)  \,\ =\,\  \widetilde{P}|\cdot|^3   \,\ =\,\  P \left|\frac{\cdot -\mu(P) }{\sigma(P)} \right|^3     
\end{eqnarray*}
its standardized third absolute moment; of course then $ \varrho(P) \ge 1$, and $\varrho(P)=1$ iff
$\widetilde{P}=\frac12\left(\delta_{-1}+\delta_1 \right)$.  Further, 
let $\widetilde{\cP_3}\coloneqq\{P\in\cP_3\colon \mu(P)=0,\ \sigma(P)=1\}=\{\widetilde P\colon P\in\cP_3\}$.
The tilde notation just introduced should not lead to confusion with a more standard one, used also here, 
for indicating equality of laws of random variables, as in $X\sim Y$, or 
for specifying the law of a random variable, as in $X\sim P$. For $P,P_1,\ldots,P_n\in\Prob(\R)$ let further $\bigconv_{i=1}^n P_i$ denote the convolution of the laws $P_1,\ldots,P_n$,  and   $P^{\ast n}$  the $n$th convolution power of $P$.

With the above notation, the problem of optimization of the structure of \textit{asymptotic} convergence rate estimates stated in~\cite{Shevtsova2012DAN1,Shevtsova2012DAN2, Shevtsova2012Debrecen,Shevtsova2012TVP} may be formulated as follows: Find the pointwise greatest lower bound to all functions $g:\mathopen[1,\infty\mathclose[\to\R_+$ such that we have
\begin{equation}\label{Eq:UniMetricOptimalStructProblem}
\Delta_n(P) \ \coloneqq  \ 
\sup_{x\in\R}\abs{\widetilde{P^{\ast n}} \big(\mathopen]-\infty,x]\big)-\Phi(x)}\ \le\  \frac{g(\rho(P))}{\sqrt{n}}+\eps_n(P)
\quad \text{ for $P\in\cP_3$ and $n\in\N$} 
\end{equation}
for some remainder term $\eps_n(P) \ge 0$, possibly depending on $g$,  satisfying
\begin{equation}\label{Eq:CLTRemTermForL->0}
 \lim_{\ell\to0}\frac{\eps(\ell)}{\ell}  \ \ =\ \ 0 \quad \text{with}\quad \eps(\ell)\ \coloneqq\ \sup_{P\in\cP_3,n\in\N\colon \rho(P) = \ell\sqrt{n}}\eps_n(P) \quad\text{for}\quad \ell>0,
\end{equation}
that is, $\eps_n(P)=o(\varrho(P)/\sqrt{n})$ for $\varrho(P)/\sqrt{n} \rightarrow 0$ with not only $n$ but also $\varrho(P)$ allowed to vary. It is easy to see that for any $g$ satisfying~\eqref{Eq:UniMetricOptimalStructProblem} and~\eqref{Eq:CLTRemTermForL->0} with  some $\eps_n(P)$ we have  $g\ge g_\ast$, where
$$
g_*(\rho)\,\ \coloneqq\,\  \lim_{\ell\to0}
\sup \left\{\sqrt{n}\Delta_n(P)\colon n\in\N,\ P\in\cP_3,\
   \rho(P)=\rho \le \ell\sqrt{n}  \right\}  \quad\text{ for } \rho\in\mathopen[1,\infty\mathclose[\, ;
$$
moreover, for $g=g_*$ and, say, $\eps_n(P)\coloneqq \max\big\{0,\Delta_n(P) - g(\varrho(P))/\sqrt{n}\big\}$,
we have~\eqref{Eq:UniMetricOptimalStructProblem} and
\begin{equation}\label{Eq:CLTRemTermForL->0NonuniForRho>=1}
\limsup_{\ell\to0}\sup_{n\in\N,P\in\cP_3\colon \rho(P)=\rho=\ell\sqrt{n}}\sqrt{n}\eps_n(P) \ \ =\ \ 0  
   \quad\text{ for } \rho\in\mathopen[1,\infty\mathclose[,
\end{equation}
which is weaker than~\eqref{Eq:CLTRemTermForL->0}
since $\varrho(P)$ is fixed in the supremum in \eqref{Eq:CLTRemTermForL->0NonuniForRho>=1}.
However, in some cases, it is possible to construct $\eps_n(P)$ satisfying the stronger condition~\eqref{Eq:CLTRemTermForL->0} such that inequality~\eqref{Eq:UniMetricOptimalStructProblem} holds with $g=g_*$. In what follows, we call $g_*$ the \textit{optimal} function.

The problem of explicitly determining  the optimal function $g_*$ is very complicated. Historically the first investigations were done for  analogous problems with either the functions $\epsilon_n\ge 0$  in~\eqref{Eq:UniMetricOptimalStructProblem} 
only required to satisfy a version of~\eqref{Eq:CLTRemTermForL->0} pointwise rather than uniformly in $P$, namely
\begin{equation}\label{Eq:CLTRemTermForL->0NonuniForPfromP3}
  \sup_{P\in\cP_3}\lim_{n\to\infty}\sqrt{n}\eps_n(P)\,\ =\,\ 0 
\end{equation}
(which is even weaker than~\eqref{Eq:CLTRemTermForL->0NonuniForRho>=1}), 
solved by Esseen~\cite{Esseen1945,Esseen1956} and thus yielding a lower bound for~$g_*$, or the functions~$g$ satisfying~\eqref{Eq:UniMetricOptimalStructProblem} restricted to be linear (without constant term), where Chistyakov~\cite{Chistyakov2001}, significantly sharpening the work of Esseen~\cite{Esseen1956}, eventually found the optimal one. More precisely, restricting now attention to $P\in\widetilde{\cP_3}$ rather than  ${P\in\cP_3}$
for notational convenience and without loss  of generality,
from Esseen's~\cite{Esseen1945}  short Edgeworth expansion
$$
\widetilde{P^{\ast n}} \big(\mathopen]-\infty,x]\big) \,\ =\,\ \Phi(x)+ (1-x^2)\mathrm{e}^{-x^2/2}\cdot \frac{\mu_3(P)}{6\sqrt{2\pi n}}+
 \psi_n(x)\mathrm{e}^{-x^2/2}\cdot \frac{h(P)}{\sqrt{2\pi n}} + o\left(\frac1{\sqrt{n}}\right)
$$
valid as $n\to\infty$ uniformly in $x\in\R$ for every fixed $P\in\widetilde{\cP_3}$, 
where $h(P)$ is the span in case of a lattice distribution~$P$ and $h(P)=0$ otherwise, and $\psi_n$ is a certain $(h(P)/\sqrt{n})$-periodic $[-\frac12,\frac12]$-valued function, in~\cite{Esseen1956} Esseen, first, deduced that
\begin{equation}\label{Eq:EsseenLimDelta_nSqrt(n)}
\lim_{n\to\infty}\sqrt{n}\Delta_n(P)\,\ =\,\  \frac{|\mu_3(P)|+3h(P)} {6\sqrt{2\pi}} \quad\text{for }\ P\in\widetilde{\cP_3}.
\end{equation}
Second, he considered and solved an extremal problem yielding an exact upper bound of the 
R.H.S.~of~\eqref{Eq:EsseenLimDelta_nSqrt(n)} in terms of $\rho(P)$ only, namely, he proved that
\la                                             \label{Eq:Esseen_extremal_result}
\sup_{P\in\widetilde{\cP_3}} \frac{\mu_3(P)+3h(P)} {\rho(P)} & =&  \sqrt{10}+3,
\al
with equality attained iff  $P=P_{\rho^{}_\mathrm{E}}$, where for $\rho\in[1,\infty[$ here and below, $P_\rho\in\widetilde{\cP_3}$ denotes the  two-point distribution uniquely defined by the conditions $\mu_3(P_\rho)\ge0$ and $\nu_3(P_\rho)=\rho,$ namely
\la                                                                                       \label{Eq:Def_P_rho}
 &&P_\rho\left(\left\{-\sqrt{\tfrac pq}\,\right\}\right)=q\coloneqq1-p,\quad  P_\rho\left(\left\{\sqrt{\tfrac qp}\,\right\}\right)=  p=p_\rho\coloneqq\frac12-\frac12\sqrt{\frac{\rho}2\sqrt{\rho^2+8}-\frac{\rho^2}2-1},
\al
and having the span and the third moment 
\la
 h_\rho &\coloneqq& h(P_\rho)\,\ =\,\ 1/\sqrt{pq} \,\ =\,\ 2\sqrt2\Big/\sqrt{ \rho^2-\rho\sqrt{\rho^2+8}+4}\,,
           \nonumber                \\
 B(\rho) & \coloneqq& \mu_3(P_\rho) \,\ =\,\ (q-p)/\sqrt{pq} \,\ =\,\ \sqrt{\rho^2/2 +\rho\sqrt{\rho^2+8}/2 -2}\,,  
  \label{Eq:Def_B(rho)_new}        
\al
and where 
$$
   \rho^{}_\mathrm{E} \,\ :=\,\ \sqrt{20(\sqrt{10}-3)/3}\,\ =\,\ 1.0401\ldots
$$
corresponds  to $p^{}_E\coloneqq p_{\rho^{}_\mathrm{E}}   =   (4-\sqrt{10})/2=0.4188\ldots\,.$ 
Hence,~\eqref{Eq:EsseenLimDelta_nSqrt(n)} specialized to $P=P_\rho$ yields the lower bound
$$
g_*(\rho)\,\ \ge\,\  \frac{B(\rho)+3h_\rho}{6\sqrt{2\pi}} \,\ =\,\ \frac1{6\sqrt{2\pi}}\cdot \frac{2\sqrt{\rho\sqrt{\rho^2+8}-\rho^2-2}+6\sqrt2} {\sqrt{\rho^2-\rho\sqrt{\rho^2+8}+4}}\,\  \eqqcolon\,\   g_0(\rho),\quad \rho\ge1,
$$
while combination of~\eqref{Eq:EsseenLimDelta_nSqrt(n)} and~\eqref{Eq:Esseen_extremal_result} allowed  Esseen to find the so-called \textit{asymptotically  best} constant 
\la                                                                             \label{Eq:Esseen_(1956)_constant}
  C_\mathrm{E} &\coloneqq& \sup_{P\in\cP_3} \lim_{n\to\infty} \frac{\sqrt{n}\Delta_n(P)}{\rho(P)} 
 \,\ = \,\ \frac{\sqrt{10}+3}{6\sqrt{2\pi}}\,\ =\,\ 0.4097\ldots
\al
and consequently the necessary condition $c\ge C_\mathrm{E}$ for~\eqref{Eq:UniMetricOptimalStructProblem} to hold with the linear function ${g(\rho)=c\rho}$. Let us remark in passing that~\cite{DinevMattner} presents 
$\sqrt{2/\pi}= 0.7978\ldots$ as the asymptotically best constant  analogous to $C_\mathrm{E}$ when arbitrary intervals replace the unbounded  ones $\mathopen]-\infty,x\mathclose]$ in the definition of $\Delta_n(P)$ in~\eqref{Eq:UniMetricOptimalStructProblem}. 

We observe that~\eqref{Eq:UniMetricOptimalStructProblem} holds with $g=g_0$ and $\eps_n(P)$ satisfying the weakest condition \eqref{Eq:CLTRemTermForL->0NonuniForPfromP3}. About 40 years after Esseen's work~\cite{Esseen1956}, 
Chistyakov~\cite{Chistyakov1996,Chistyakov2001}  finally managed to find in particular the value of the \textit{asymptotically exact} constant
\la                    \label{Eq:Chistyakov_constant}
\lim_{\ell\to0}\sup\left\{ \frac{\sqrt{n}\Delta_n(P)}{\rho(P)}\colon n\in\N,\ P\in\cP_3,\ \rho(P)\le\ell\sqrt{n} \right\}
&=& C_\mathrm{E} 
\al
(we are not aware of any really convincing names for the ``Esseen constant'' in~\eqref{Eq:Esseen_(1956)_constant}
and the ``Chistyakov constant'' in~\eqref{Eq:Chistyakov_constant}; the ones used above are at least compatible with some earlier literature such 
as~\cite{Shevtsova2010TVP})
and to prove that~\eqref{Eq:UniMetricOptimalStructProblem} and~\eqref{Eq:CLTRemTermForL->0} hold true with
$$
g(\rho)\,\ =\,\   C_E\cdot\rho  \,\ \eqqcolon \,\  g_1(\rho) \quad \text{ for }\rho\ge1
$$
and $\eps(\ell)=O\left(\ell^{40/39}|\log\ell|^{7/6}\right)$ (see~\cite{Chistyakov2001}). We also remark here that 
the papers~\cite{Chistyakov1996,Chistyakov2001} treat the non-i.i.d.~case and that the results cited above represent the corresponding specializations to the i.i.d.~case. In the more recent paper~\cite[Corollary~4.18 on p.\,303]{Shevtsova2012Debrecen} Chistyakov's upper bound for $\eps(\ell)$ was improved to $\eps(\ell)\le4\ell^{4/3}$ in the general case and $\eps(\ell)\le3\ell^2$ in the i.i.d. case.

Discarding now the restriction to linear functions, we note that Chistyakov's result reported above 
yields $g_*(\rho)\le g_1(\rho)$ for every $\rho\in[1,\infty[$, with equality in case of ${\rho =\rho_\mathrm{E}}$,
and that a result of Hipp and Mattner~\cite{HippMattner2007} yields $g_*(1) =1/\sqrt{2\pi}=0.3989\ldots
< g_1(1)$. The recent papers~\cite{Shevtsova2012DAN1}, \cite[Theorem\,4.13, Corollary\,4.17]{Shevtsova2012Debrecen} succeeded
in particular in proving that in fact the equality $g_* = g_0$ holds on an interval containing 
the previously treated  points $1$ and $\rho_\mathrm{E}$, namely,
\[
g_*(\rho) &=& g_0(\rho)\quad \text{for } 1\ \le\ \rho\ \le\ \rho_0\ \coloneqq\  3^{1/4}(4-\sqrt3)/\sqrt{6} \,\ =\,\ 1.2185\ldots,
  \\ 
g_*(\rho) &\le& g_2(\rho)\quad \text{for all }\rho\ge1
\]
with
\[
g_2(\rho) &\coloneqq& \frac{2\rho}{3\sqrt{2\pi}}+\sqrt{\frac{2\sqrt3-3}{6\pi}} 
\quad \text{ for }\rho\ge1,
\]
$g_2(\rho_0)=g_0(\rho_0),$ and $g_2$ is asymptotically optimal for $\rho\to\infty$ in the sense of
$$
\lim_{\rho\to\infty}\frac{g_*(\rho)}{g_2(\rho)} \,\ =\,\ 1.
$$
Observe that $g_2(\rho)<C_E\cdot\rho$ for  $\rho>\frac23\sqrt{2/\sqrt3-1}(\sqrt{10}+1)=1.0914\ldots,$ in particular, for $\rho\ge\rho_0$, and that each of the functions $g_1$ and $g_2$ is tangent to $g_*$ at the points $\rho^{}_{\mathrm E}$
and $\rho_0$, respectively. We also note that, as it follows from Schulz' thesis~\cite[p.~16]{Schulz2016}, the equality $g_*=g_0$ cannot hold on the whole ray $[1,\infty[$ even in the binomial case, namely,
\[
g_*(\rho) &>& g_0(\rho) \quad\text{for}\quad \rho>3.8021\ldots,
\]
which corresponds to $p_\rho<p_*,$ where $p_*=0.05822\ldots$ is the unique root of the equation $7-130p+165p^2+50p^3-23p^4=0$ on the interval $0<p<1/3.$

In~\cite{Shevtsova2012DAN1}, \cite[Theorem~4.13 on p.\,298 and Corollary~4.17 on p.\,302]{Shevtsova2012Debrecen} there have also been obtained explicit uniform upper bounds for the remainder term 
$\eps_n(P)$ in~\eqref{Eq:UniMetricOptimalStructProblem} and~\eqref{Eq:CLTRemTermForL->0} with the continuous function $g$ defined by $g(\rho)\coloneqq g_0(\rho)$ for $\rho\in[1,\rho_0]$ and $g(\rho)\coloneqq g_2(\rho)$ for $\rho>\rho_0$, namely, $\eps(\ell)\le2\ell^{3/2}$ for all $\ell>0$.  

Moreover, in the same papers an extension of~\eqref{Eq:UniMetricOptimalStructProblem} to the non-i.i.d. case was obtained in the form
$$
\sup_{n\in\N,\,P_1,\ldots,P_n\in\cP_3}\sup_{x\in\R}
\bigg| \widetilde{\bigconv_{i=1}^nP_i} \big(\mathopen]-\infty,x]\big)-\Phi(x)\bigg|
%\pr(\widetilde S_n\le x)-\Phi(x)|
\,\ \le\,\  \tau\cdot g(\ell/\tau)+3\ell^{7/6}
$$
with the same function $g$ as defined in the preceding paragraph for the case of coinciding $P_1,\ldots,P_n$,  where the supremum is taken over all $n$ and all centred distributions $P_1,\ldots,P_n\in\cP_3$ such that $\sum_{i=1}^n\nu_3(P_i)/(\sum_{i=1}^n\sigma_i^2)^{3/2}=\ell$,  $\sum_{i=1}^n\sigma_i^3/(\sum_{i=1}^n\sigma_i^2)^{3/2}=\tau$, $\sigma_i^2\coloneqq\sigma^2(P_i)$ for $i\in\{1,\ldots,n\}$. Moreover, there has also been proved a sharpened upper bound for $\eps_n(P)$ in~\eqref{Eq:UniMetricOptimalStructProblem} and~\eqref{Eq:CLTRemTermForL->0}  with the linear function $g(\rho)=C_\mathrm{E}\cdot\rho$, namely, $\eps(\ell)\le3\ell^2$ in the i.i.d.~case and $\eps(\ell)\le4\ell^{4/3}$ in the non-i.i.d.~case. These bounds improve the earlier results of Bentkus~\cite{Bentkus1994} and Prawitz~\cite{Prawitz1975}.

Recently Schulz~\cite[Theorem~1 on p.~1]{Schulz2016} proved that the remainder term $\eps_n(P)$ in~\eqref{Eq:UniMetricOptimalStructProblem} with $g=g_0$ can be omitted in case of two-point distribution $P=P_\rho$ for $1\leq\rho\leq5\sqrt{2}/6=1.1785\ldots$ (which corresponds to $p_\rho\in[1/3,1/2]$),  generalizing the earlier result by Hipp and Mattner~\cite{HippMattner2007} originally obtained for $\rho=1$. Also, in~\cite[Theorem~1 on p.~1]{Schulz2016}  it is proved that $\Delta_n(P_{\rho})\le C_\mathrm{E}\cdot \rho$ for every $\rho\ge1$ and $n\in\N$.

The present paper can in its main parts be regarded as a transfer and then improvement of some of the above results 
from the Kolmogorov to the appropriate Zolotarev metric, namely $\zeta_3$. 

For the related topic of asymptotic expansions of expectations of smooth functions in the CLT, where rigorous results 
go back at least to Cram\'er~\cite[p.~45, (41a)]{Cramer_1928} in the case of characteristic functions, 
and to von~Bahr~\cite{von_Bahr_1965} in the case of moments and absolute moments,
we may refer in chronological order to the surveys in~\cite[section 25, that section 
apparently unchanged from its earlier 1986 edition]{Bhat_Ranga_Rao}, \cite[Chapter~2]{Ghosh1994},
and \cite[pp.~196--197]{Petrov_1995},
and to the more recent papers \cite{BorisovSkilyagina1996,BorisovPanchenkoSkilyagina1998,Jiao2012}.
From the vast literature on asymptotic expansions of distribution functions, and thus expectations of 
certain non-smooth functions, for which  one may also  consult the monographs just cited,  
let us mention only the recent paper~\cite{AngstPoly2017}.

This paper is organized as follows. Subsections~\ref{Subsec:Notation},
\ref{Subsec:Main_results}, and~\ref{Subsec:NormApprox} present exact formulations of the
main results with discussion. Sections~\ref{Sec:Main_proofs}  and~\ref{Sec:zeta3-between-N-B} contain the proofs of the main results. The latter are based on Hoeffding's~\cite{Hoeffding1955} and Tyurin's~\cite{Tyurin2009arxiv,Tyurin2009DAN,Tyurin2010TVP} results for extremal values of linear and quasi-convex functionals under given moment conditions treated in a novel way in section~\ref{Sec:reduction-thms}, 
the previously obtained bound on the third-order moment given the absolute third-order moment~\cite{Shevtsova2014JMAA} as well as a new exact absolute third moment recentering inequality presented  in Lemma~\ref{Lem:E|X-t|^3<=E(a+bX+cX^2+d|X|^3)}, various properties of $\zeta$-metrics, in particular  in connection with the $s$-convex ordering~\cite{DenuitLefevreShaked1998} as treated in section~\ref{Sec:zeta-metrics}, and the properties of the Krawtchouk polynomials~\cite{MacWilliams_Sloane} associated to the symmetric binomial law used in section~\ref{Sec:zeta3-between-N-B}.

The main results of this paper have been announced without proofs in \cite{Mattner_Shevtsova_1}.

\subsection{Further notation, properties of the function $B$}               \label{Subsec:Notation}
%%%%%%%%%%%%%%%%%%%%%%%%%%%%%%%%%%%%%%%%%%%%%%%%%%%%%%%%%%%%%%%%%%%%%%%%%%%%%%%%%%%%%%%%%%%%
 
Terms like ``positive'', ``increasing'', and  ``convex''  are understood in the wide sense, 
adding ``strictly'' when  appropriate.  Also, ``interval'' may refer to  any convex subset of $\R$, possibly degenerated to one point 
or even to the empty set. 
We use the de Finetti indicator notation, $(\text{statement})\coloneqq 1$ if ``statement'' is true,
$(\text{statement})\coloneqq 0$ otherwise, for example in~\eqref{Eq:Ineq_S(f)_S(f')} below.

If $I\subseteq\R$ is an interval and $E$ is a Banach space over $\R$ or $\C$, and with its norm denoted by $| \cdot |$ 
since the most interesting cases here are $E=\R$ and $E=\C$, then we use the standard notation  $\cC(I,E)$ for the continuous $E$-valued functions on $I$, $\cC^m(I,E)$ for the ones $m\in\N_0$ times continuously differentiable, and $\cC^{m,\alpha}(I,E)$  for those $f\in\cC^m(I,E)$ whose $m$-th derivative $f^{(m)}$ has a finite H\"older  constant 
\la                                                    \label{Eq:Def_Hoelder-constant}
   \|f^{(m)}\|^{}_{\mathrm{L},\alpha} &\coloneqq&  \sup_{x,\,y\in I,\,x\neq y} \frac{|f^{(m)}(x)-f^{(m)}(y)|}{|x-y|^\alpha}
\al
of order  $\alpha\in\mathopen]0,1\mathclose]$.
It is well known that for $E$ finite-dimensional, and also more generally as discussed in~\cite{DiestelUhl}, the condition 
$f\in\cC^{m,1}(I,E)$ is equivalent to $f^{(m)}$ being absolutely continuous with its then Lebesgue-almost everywhere existing derivative 
$f^{(m+1)}$ satisfying  
\[
 \|f^{(m)}\|^{}_{\mathrm{L}} & \coloneqq & \|f^{(m)}\|^{}_{\mathrm{L},1}\ =\ \|f^{(m+1)}\|^{}_{\infty}\\
& \coloneqq & \inf \{ M \in \R : |f^{m+1}| \le M \text{ Lebesgue-almost everywhere on } I\}.
\]

%%%%%%%%%%%%%%%%%%%%%%%%%%%%%%%%%%%%%%%%%%%%%%%%%%%%%%%%%%%%%%%%%%%%%%%%%% 
%\subsection{The auxiliary functions $A$ and $B$}         \label{Subsec:A_and_B}
%%%%%%%%%%%%%%%%%%%%%%%%%%%%%%%%%%%%%%%%%%%%%%%%%%%%%%%%%%%%%%%%%%%%%%%%%%%%%%%%%%%%%%%%
Recalling the definition of $B(\rho)$ given in~\eqref{Eq:Def_B(rho)_new} for $\rho\in[1,\infty[$, let us also put
\la &&                                \label{Eq:Def_A_B}
  A(\rho) \,\ \coloneqq\,\ \rho^{-1}B(\rho)\,\ =\,\ \sqrt{\tfrac12\sqrt{1+8\rho^{-2}}+\tfrac12 -2\rho^{-2} }
 \quad\text{ for }\rho\in[1,\infty[.
\al
The notation $A$ here is as used in~\cite[pp.~194, 208]{Shevtsova2014JMAA},
so let us note that there is an inconsequential typo in the formula for $A'(\rho)$ in
\cite[p.~208]{Shevtsova2014JMAA}, where $\rho^{3/2}$  should be $\rho^3/2$.

%%%%%%%%%%%%%%%%%%%%%%%%%%%%%%%%%%%%%%%%%%%%%%%%%%%%%%%%%%%%%%%%%%% 
\begin{Lem}                                              \label{Lem:A_and_B}
The functions $A$ and $B$ are continuous, strictly concave and increasing,
with $A(1)=  B(1)=0,$ $\lim_{\rho\rightarrow1}A(\rho)/\sqrt{\rho-1} = \sqrt{8/3},$
and $\lim_{\rho\rightarrow\infty}A(\rho)=1$.
In particular, we have $0 < A(\varrho) <1$ and 
$\varrho-1 < B(\rho)<\rho$ for  $\varrho\in\mathopen]1,\infty[$.
\end{Lem}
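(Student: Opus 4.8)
The plan is to strip away the nested radicals by the substitution $r = r(\rho) \coloneqq \tfrac12\bigl(\rho + \sqrt{\rho^2+8}\bigr)$, the nonnegative solution of $r^2 - \rho r - 2 = 0$, so that conversely $\rho = r - 2/r$. Since $\tfrac{\mathrm{d}\rho}{\mathrm{d}r} = 1 + 2r^{-2} > 0$ and $r=2$ at $\rho=1$, the map $r\mapsto\rho$ is a smooth strictly increasing bijection of $\mathopen[2,\infty\mathclose[$ onto $\mathopen[1,\infty\mathclose[$, with smooth inverse on the corresponding open rays. Using $\rho r = r^2-2$ one gets $r^2 - 4 = \rho r - 2 = \tfrac12\rho^2 + \tfrac12\rho\sqrt{\rho^2+8} - 2 \ge 0$, hence, recalling \eqref{Eq:Def_B(rho)_new}, the clean representations
\[
B(\rho) &=& \sqrt{r^2-4}\,, \qquad A(\rho) = \frac{B(\rho)}{\rho} = \frac{r\sqrt{r^2-4}}{r^2-2}\,, \qquad A(\rho)^2 = 1 - \frac{4}{(r^2-2)^2}\,,
\]
the middle one also reproducing the formula for $A$ in \eqref{Eq:Def_A_B} via $r/\rho = \tfrac12 + \tfrac12\sqrt{1+8\rho^{-2}}$. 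Each right-hand side is continuous on $\mathopen[1,\infty\mathclose[$ and vanishes at $\rho=1$, and $\sqrt{r^2-4}$ and $1-4(r^2-2)^{-2}$ are manifestly strictly increasing in $r$, hence in $\rho$; so $A,B$ are continuous, $A(1)=B(1)=0$, and both strictly increasing. The limits follow at once: $r\to\infty$ as $\rho\to\infty$ yields $A^2\to1$, and since $A(\rho)^2 = \tfrac12 + \tfrac12\sqrt{1+8\rho^{-2}} - 2\rho^{-2}$ is differentiable at $\rho=1$ with value $0$ and derivative $\tfrac83$ there, $A(\rho)^2/(\rho-1)\to\tfrac83$ and hence $A(\rho)/\sqrt{\rho-1}\to\sqrt{8/3}$.

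For concavity I would work on the open ray $\rho>1$, where all functions are smooth, and extend to $\mathopen[1,\infty\mathclose[$ by continuity afterwards. The chain rule together with $\tfrac{\mathrm{d}\rho}{\mathrm{d}r} = (r^2+2)/r^2$ gives
\[
B'(\rho) &=& \frac{r^3}{(r^2+2)\sqrt{r^2-4}} \ \eqqcolon\ \phi(r)\,, \qquad A'(\rho) = \frac{8r^2}{(r^2-2)^2(r^2+2)\sqrt{r^2-4}} \ \eqqcolon\ \psi(r)\,,
\]
the formula for $A'$ resting on the one-line identity $\tfrac{\mathrm{d}A}{\mathrm{d}r} = 8(r^2-2)^{-2}(r^2-4)^{-1/2}$. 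Since $\rho\mapsto r$ is increasing, $B$ (resp.\ $A$) is strictly concave iff $\phi$ (resp.\ $\psi$) is strictly decreasing in $r$, and logarithmic differentiation reduces this to elementary sign checks. One finds $\phi'(r)/\phi(r) = -24\,\bigl[r(r^2+2)(r^2-4)\bigr]^{-1} < 0$ for $r>2$; and, with $v\coloneqq r^2 > 4$, $r\,\psi'(r)/\psi(r)$ is a quotient with positive denominator $(v-2)(v+2)(v-4)$ and numerator $-5v^3+12v^2+12v+32$, which equals $-48$ at $v=4$ and is decreasing there and beyond (its derivative $-15v^2+24v+12$ being negative for $v\ge4$), hence negative throughout. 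So $\phi,\psi$ strictly decrease, and $A,B$ are strictly concave on the open ray, and then on $\mathopen[1,\infty\mathclose[$ by continuity.

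The ``in particular'' assertions are now read off: from $A(\rho)^2 = 1-4(r^2-2)^{-2}$ we get $A(\rho)<1$, while $A(\rho)>A(1)=0$ by strict monotonicity, so $0<A(\rho)<1$ and therefore $B(\rho)=\rho A(\rho)<\rho$ for $\rho>1$; and for $B(\rho)>\rho-1$ I would use that $\phi(r)=B'(\rho)$ is strictly decreasing with $\lim_{r\to\infty}\phi(r)=1$, so $B'(\rho)>1$ on the open ray, which together with $B(1)=0$ integrates to $B(\rho)>\rho-1$ (alternatively, squaring twice turns $B(\rho)^2>(\rho-1)^2$ into $(\rho-1)(2\rho^2-3\rho+9)>0$). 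The only genuinely delicate point is the bookkeeping surrounding the reparametrization: one must make sure that strict decrease of $\phi$ and $\psi$ as functions of $r$ really yields concavity of $B$ and $A$ as functions of $\rho$ --- which it does, since $\rho\mapsto r$ is an increasing $C^1$ bijection and $\phi\circ r = B'$, $\psi\circ r = A'$ --- and that concavity on the open ray carries over to the closed ray by continuity; once the double radicals are eliminated, the remaining algebra is routine.
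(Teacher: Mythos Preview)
Your proof is correct and complete; all the computations check out (I verified the substitution identities, the expressions for $\phi$ and $\psi$, the logarithmic derivatives, and the polynomial sign analysis for $\psi$). The step extending strict concavity from the open ray to the closed ray is also fine: since $B'$ and $A'$ exist and are strictly decreasing on $\mathopen]1,\infty\mathclose[$, the mean value theorem applied on $[a,b]$ and $[b,c]$ with $1\le a<b<c$ gives distinct slopes, which is strict concavity on $[1,\infty\mathclose[$.

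Your route differs genuinely from the paper's. The paper handles $A$ by observing that $A^2(\rho)=\tfrac12+\tfrac12\sqrt{1+8\rho^{-2}}-2\rho^{-2}$ has a derivative $4\rho^{-3}(1-(1+8\rho^{-2})^{-1/2})$ that is visibly positive and strictly decreasing, so $A^2$ is strictly concave and increasing, whence $A=\sqrt{A^2}$ is too; this avoids your reparametrization and the $\psi$ computation entirely. For $B$, however, the paper does \emph{not} give a self-contained argument: it simply cites \cite[p.~209]{Shevtsova2014JMAA} for $B''<0$. Your substitution $r=\tfrac12(\rho+\sqrt{\rho^2+8})$, which turns $B$ into $\sqrt{r^2-4}$ and reduces everything to rational functions of $r$, buys you a fully self-contained proof at the cost of somewhat more algebra. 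Your argument for $B(\rho)>\rho-1$ via $B'>1$ (since $\phi$ strictly decreases to $1$) is also more direct than the paper's, which uses concavity to say $B(\rho)/(\rho-1)$ is strictly decreasing and then implicitly invokes the limit $1$ at infinity.
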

\begin{proof} We have  $\left(A^2(\rho)\right)' = 4\rho^{-3}(1-(1+8\rho^{-2})^{-1/2})$
strictly decreasing  and positive  for $\rho\in[1,\infty[$,
hence $A^2$ strictly concave and increasing, thus $A=\sqrt{A^2}$ strictly   concave and increasing  as well,
and also $\lim_{\rho\rightarrow1}A^2(\rho)/(\rho-1)=\left(A^2\right)'(1)=8/3$.
$B$ is obviously strictly increasing and, by~\cite[p.~209]{Shevtsova2014JMAA}, satisfies $B''<0$ and is hence
strictly concave; hence $B(\varrho)/(\varrho-1) = (B(\varrho)-B(1)) /(\varrho-1)$ is strictly decreasing 
and hence $>1$.
\end{proof}

For $n\in\N=\{1,2,\ldots\}$, let $\mathrm{B}_{n,\frac12} \coloneqq (\frac12(\delta_{0}+\delta_1))^{\ast n}$
denote the binomial law with 
$\mathrm{B}_{n,\frac12}(\{k\}) =\mathrm{b}_{n,\frac12}(k)\coloneqq \binom{n}{k}2^{-n}$ for $k\in\N_0\coloneqq \N\cup\{0\}$.

%%%%%%%%%%%%%%%%%%%%%%%%%%%%%%%%%%%%%%%%%%%%%%%%%%%%%%%%%%%%%%%%% 
\subsection{The main result (Rademacher average approximation) and some consequences}   \label{Subsec:Main_results}
%%%%%%%%%%%%%%%%%%%%%%%%%%%%%%%%%%%%%%%%%%%%%%%%%%%%%%%%%%%%%%%%%%%%%%%%%%%%%%%%%%%%%%%%

Our main result is: 

%%%%%%%%%%%%%%%%%%%%%%%%%%%%%%%%%%%%%%%%%%%%%%%%%%%%%%% Main Theorem
\begin{Thm}                                               \label{Thm:Main}
%%%%%%%%%%%%%%%%%%%%%%%%%%%%%%%%%%%%%%%%%%%%%%%%%%%%%%%%%%%%%%%%%%%%%%%%%%%
Let $n\in \N,$ $P_1,\ldots,P_n\in\cP_3,$  $E$ be a  Banach space, and $f \in \cC^{2,1}(\R,E)$.  Then we have
\la                        \label{Eq:Main_inequality_non-i.i.d.}
  \left|\widetilde{\bigconv_{i=1}^n P_i} \, f  - \widetilde{\bigconv_{i=1}^n Q_i}\, f  \right|
   &\le&   \frac{\|f''\|^{}_{\mathrm{L}}}{6} \sum_{i=1}^n
            \frac{\sigma_i^3 }{\sigma^3}B(\varrho_i) 
\al
with $\sigma_i \coloneqq\sigma(P_i)$, 
$Q_i\coloneqq \frac 12(\delta_{-\sigma_i}+\delta_{\sigma_i}),$ 
$\sigma\coloneqq\left(\sum_{i=1}^n\sigma_i^2\right)^{1/2},$
and $\varrho_i \coloneqq \varrho(P_i)$. 
If each  $P_i$ is  a two-point law and if the centred third moments of the $P_i$ are all $\ge 0$ or all $\le0,$ and if also $f(x)=cx^3$ for $x\in\R,$ with a constant $c\in E,$ then equality holds in~\eqref{Eq:Main_inequality_non-i.i.d.}.
\end{Thm}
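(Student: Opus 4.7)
The natural framework is Zolotarev's metric of order three,
\[
\zeta_3(P,Q)&\coloneqq&\sup\!\left\{\,|Pf-Qf|\,:\,f\in\cC^{2,1}(\R,\R),\ \|f''\|^{}_{\mathrm{L}}\le 1\,\right\},
\]
defined for $P,Q\in\cP_3$ having equal first two moments. I would first reduce the theorem to the scalar-valued case by Hahn--Banach: for every unit functional $\ell\in E^*$ the composition $\ell\circ f$ lies in $\cC^{2,1}(\R,\R)$ with $\|(\ell\circ f)''\|^{}_{\mathrm{L}}\le\|f''\|^{}_{\mathrm{L}}$, and taking a supremum over such $\ell$ of the real-valued inequality recovers the $E$-norm on the left of~\eqref{Eq:Main_inequality_non-i.i.d.}. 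Assuming henceforth $E=\R$, the left-hand side is bounded above by $\|f''\|^{}_{\mathrm{L}}\cdot\zeta_3\bigl(\widetilde{\bigconv_i P_i},\widetilde{\bigconv_i Q_i}\bigr)$.

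I would then collapse this Zolotarev distance to a sum of per-summand contributions via three standard properties of $\zeta_3$: translation invariance, the dilation scaling $\zeta_3(D_c P,D_c Q)=|c|^3\zeta_3(P,Q)$, and ideality $\zeta_3(P\ast R,Q\ast R)\le\zeta_3(P,Q)$. Writing $P_i=\delta_{\mu(P_i)}\ast P_i^0$ with $P_i^0$ centred, both convolutions $\bigconv_i P_i^0$ and $\bigconv_i Q_i$ are centred with variance $\sigma^2$. Pulling a factor $\sigma^{-3}$ out by scaling, then applying the triangle inequality and ideality once per summand (the usual telescoping between $\bigconv_i A_i$ and $\bigconv_i B_i$), and finally rescaling each summand by $1/\sigma_i$, I would obtain
\[
\zeta_3\!\bigl(\widetilde{\bigconv_i P_i},\widetilde{\bigconv_i Q_i}\bigr)
&\le&\sigma^{-3}\sum_{i=1}^{n}\zeta_3(P_i^0,Q_i)\ =\ \sigma^{-3}\sum_{i=1}^{n}\sigma_i^3\,\zeta_3(\widetilde{P_i},Q^*),
\]
where $Q^*\coloneqq\tfrac12(\delta_{-1}+\delta_1)$. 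The theorem thus reduces to the single-law inequality $\zeta_3(P,Q^*)\le\tfrac16 B(\varrho(P))$ for all $P\in\widetilde{\cP_3}$.

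This single-law bound is the heart of the argument and, I expect, where the main difficulty lies. Its lower direction is immediate: the cubic $f(x)=x^3/6$ has $\|f''\|^{}_{\mathrm{L}}=1$ and gives $Pf-Q^*f=\mu_3(P)/6$, so for the two-point extremal $P=P_\rho\in\widetilde{\cP_3}$, \eqref{Eq:Def_B(rho)_new} yields $\zeta_3(P_\rho,Q^*)\ge B(\rho)/6$. For the upper direction, $\zeta_3(\cdot,Q^*)$ is a convex functional on the convex set $\{P\in\widetilde{\cP_3}:\varrho(P)\le\rho\}$, being a supremum of affine functionals of $P$; by Hoeffding--Tyurin extremal principles for quasi-convex functionals under moment conditions---treated in a novel way in Section~\ref{Sec:reduction-thms}---the maximum is attained at laws supported on only a few atoms. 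Combining this extremal reduction with the $s$-convex ordering properties of $\zeta_3$ from Section~\ref{Sec:zeta-metrics}, the new recentring inequality in Lemma~\ref{Lem:E|X-t|^3<=E(a+bX+cX^2+d|X|^3)}, and Shevtsova's bound $|\mu_3(P)|\le B(\varrho(P))$ of~\cite{Shevtsova2014JMAA}, I would pin the extremum down to $P=P_\rho$ with cubic test function, completing the single-law estimate.

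Finally, the equality assertion follows transparently from this reduction. For $f(x)=cx^3$ with $c\in E$ one has $\|f''\|^{}_{\mathrm{L}}=6|c|$, and for two-point $P_i$ the centred third moments satisfy $|\mu_3(P_i^0)|=\sigma_i^3\,B(\varrho_i)$. The common-sign hypothesis on the $\mu_3(P_i^0)$ makes the $n$ per-summand contributions point in a common direction of $E$, so the telescoping triangle inequality of the second paragraph is saturated and the asserted bound is attained.
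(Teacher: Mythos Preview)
Your outline matches the paper's proof architecture exactly: Hahn--Banach to reduce to $E=\R$ (paper's Step~2), passage to $\zeta_3$ (Step~3), and semiadditivity plus homogeneity of $\zeta_3$ to reduce to $n=1$ (Step~4). The equality case is also handled the same way in substance, though the paper computes both sides directly rather than arguing via saturation.

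One refinement worth flagging in the extremal reduction: you propose to maximize $\zeta_3(\cdot,Q^*)$ over $\{P\in\widetilde{\cP_3}:\varrho(P)\le\rho\}$. Tyurin's theorem as stated in the paper (Theorem~\ref{Thm:Tyurin_reduction}) is for equality constraints, so incorporating $\varrho$ as a third constraint would yield $k=3$ and hence at most four-point laws. The paper instead applies Tyurin to the combined functional $F(P)=\zeta_3(P,Q)-\tfrac16 B(\varrho(P))$ on the set with only the mean and variance fixed ($k=2$); this $F$ is convex because $\zeta_3$ is convex, $\varrho$ is linear on that set, and $B$ is concave (Lemma~\ref{Lem:A_and_B}). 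That buys a reduction to \emph{three}-point laws, which matters for the subsequent analysis. You correctly name the tools for that analysis (the crossing-point results in Theorem~\ref{Thm:CrossingPoints}, Lemma~\ref{Lem:E|X-t|^3<=E(a+bX+cX^2+d|X|^3)}, and Shevtsova's $|\mu_3|\le B(\varrho)$), but the paper's Steps~6--7 actually carry it out by a case split on the number of sign changes of $\overline{H}_1=\overline{G}-\overline{F}$: at most two sign changes forces the cubic test function via $3$-convex ordering, while exactly three sign changes forces a shifted-absolute-cube test function $|x-r|^3/6$ with $r\in\mathopen]-1,1\mathclose[$, which the recentring lemma then dominates by the two-point extremal $P_\rho$.
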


The proof of Theorem~\ref{Thm:Main} is given in section~\ref{Sec:Main_proofs} on p.\,\pageref{Sec:Main_proofs}.

Clearly, in the homoscedastic case of $\sigma_1 = \ldots =\sigma_n$, the approximating
law  $\widetilde{\bigconv_{i=1}^n Q_i}$ in Theorem~\ref{Thm:Main} is just the
standardized symmetric binomial law $\widetilde{\mathrm{B}_{n,\frac12}}$.
And in the i.i.d.~case of $P_1=\ldots=P_n\eqqcolon P$,
inequality~\eqref{Eq:Main_inequality_non-i.i.d.} further simplifies to
\la                                                  \label{Eq:Main_inequality}
  \left|\widetilde{P^{\ast n}} f -  \widetilde{\mathrm{B}_{n,\frac12}}f  \right|
   &\le& \frac{B(\rho(P))}{6\sqrt{n}} \|f''\|_{\mathrm{L}}^{},
\al
with equality whenever $P$ is a two-point law and $f(x)=cx^3$.

Here are three examples of applications of Theorem~\ref{Thm:Main}, of which the first one,
however, is a mock one.

%%%%%%%%%%%%%%%%%%%%%%%%%%%%%%%%%%%%%%%%%%%%%%%%%%%%%%%%%%%%%%%%%%%%%%%%%%%%%%%  Example 1.3
\begin{Example}               \label{Example:Thm_6_of_Irina_2014}
%%%%%%%%%%%%%%%%%%%%%%%%%%%%%%%%%%%%%%%%%%%%%%%%%%%%%%%%%%%%%%%%%%%%%%%%%%%%%%%%%%%%%%%%%%%%
Theorem~\ref{Thm:Main} formally yields~\cite[Theorem~6]{Shevtsova2014JMAA}, namely
\la               \label{Eq:Thm_6_of_Irina_2014}
 \max_{P\in \cP_3\colon \rho(P)=\rho} \left|\int x^3\dd\widetilde{P}(x) \right|
  &=& B(\rho) \quad\text{ for }\rho\in[1,\infty[
\al
with equality attained for two-point laws,
by applying~\eqref{Eq:Main_inequality} with $E=\R$, $n=1$, and $f(x) \coloneqq x^3$,
since for $P\in\cP_3$, we have
\[
 \left|\int x^3\dd\widetilde{P}(x) \right|
 &=& \left|\widetilde{P} f - \widetilde{\mathrm{B}_{1,\frac12}} f \right|
\]
and $\|f''\|^{}_{\mathrm{L}} = 6$.
However, \eqref{Eq:Thm_6_of_Irina_2014} %\cite[Theorem~6]{Shevtsova_2014}
is used in Step 6 of our proof of Theorem~\ref{Thm:Main}. 
\end{Example}

%%%%%%%%%%%%%%%%%%%%%%%%%%%%%%%%%%%%%%%%%%%%%%%%%%%%%%%%%%%%%%%%%%%%%%%%%%%%%%  Example 1.4
\begin{Example}                                                \label{Example:Char_function_approximation}
%%%%%%%%%%%%%%%%%%%%%%%%%%%%%%%%%%%%%%%%%%%%%%%%%%%%%%%%%%%%%%%%%%%%%%%%%%%%%%%%%%%%%%%%%%
In Theorem~\ref{Thm:Main}, let $E=\C$ and $f(x) = \mathrm{e}^{\mathrm{i}tx}$ for some $t\in\R$.
Then, writing~$\varphi$ 
for the characteristic function of $\widetilde{\bigconv_{i=1}^n P_i}$, we get
\la                                                             \label{Eq:Ineq_char_fct}
  \left| \varphi (t) - \prod_{i=1}^n \cos\left(\frac{\sigma^{}_it}{\sigma}  \right) \right|
  &\le & \frac{|t|^3}{6}\sum\limits_{i=1}^n \frac{\sigma_i^3 B(\varrho_i)}{\sigma^3},
\al
since here $\| f'' \|^{}_{\mathrm{L}} = \sup_{x\in\R} |f'''(x)| = |t|^3$. In~\eqref{Eq:Ineq_char_fct}, we have
asymptotic equality for $t\rightarrow 0$ if  all the $P_i$ are two-point laws with
equi-signed third centred  moments, by equality in~\eqref{Eq:Main_inequality_non-i.i.d.}
for $f=(\cdot)^3$ and by a Taylor expansion inside the modulus on the left hand side of~\eqref{Eq:Ineq_char_fct}.

Moreover, using 
\la                                                             \label{Eq:prod_cos_inequality}
  0&\le& \prod_{i=1}^n \cos t_i -1+ \frac{1}{2}\sum_{i=1}^nt_i^2 
    \,\ \le \,\ \frac1{24}\sum_{i=1}^n t_i^4 + \frac1{4}\sum_{i<j} t_i^2t_j^2  \quad\text{ for } t\in\R^n,
\al
which follows by rewriting the central term in~\eqref{Eq:prod_cos_inequality}
with the help of  i.i.d.~Rademacher variables $\xi_1,\ldots,\xi_n$ as 
\[
  \prod_{i=1}^n \E \mathrm{e}^{\mathrm{i}t_i\xi_i} - 1+ \frac{1}{2}\sum_{i=1}^nt_i^2 \E\xi_i^2
   \,\ =\,\ \E\bigg(\cos\Big(\sum_{i=1}^nt_i\xi_i\Big)-1+ \frac{1}{2}\Big(\sum_{i=1}^nt_i\xi_i\Big)^2\bigg)
\]
and applying $0\le \cos x -1+\frac12x^2 \le \frac1{24}x^4$ inside the last expectation above,
we obtain from~\eqref{Eq:Ineq_char_fct} the following estimate for the accuracy of the 
approximation of $\varphi$ by the first terms of its Taylor expansion:
\[
  \left| \varphi (t) - 1 + \frac{t^2}{2}\right|
 &\le& \frac{|t|^3}{6}\sum\limits_{i=1}^n \frac{\sigma_i^3 B(\varrho_i)}{\sigma^3} +\frac{t^4}{24}\sum_{i=1}^n \frac{\sigma_i^4}{\sigma^4} + \frac{t^4}{4}\sum_{i<j}\frac{\sigma_i^2\sigma_j^2}{\sigma^4}\quad \text{ for } t\in\R.
\]
In particular, with $n=1$  we have 
\la                                                       \label{Eq:IneqCh.F.Taylor(n=1)}
\abs{\E \mathrm{e}^{\mathrm{i}tX}-1+ \frac{t^2}2} &\le& A(\varrho)\frac{\varrho |t|^3}6+\frac{t^4}{24}
\al
for all $t\in\R$ and an arbitrary r.v. $X$ with $\E X=0,$ $\E X^2=1$, $\varrho:=\E|X|^3<\infty$, where the inequality turns into the asymptotic equality as $t\to0$ whenever $X$ is a two-point r.v. (more precisely, either $X~\sim P_\rho$ or $-X\sim P_\rho$ with  $P_\rho$ defined in~\eqref{Eq:Def_P_rho}).

Inequality~\eqref{Eq:IneqCh.F.Taylor(n=1)} for small $t$ improves the bound
\[
 \abs{\E \mathrm{e}^{\mathrm{i}tX}-1+ \frac{t^2}2}&\le& \frac{\varrho|t|^3}6\inf_{0<\lambda<1/2}\{\lambda A(\varrho)+q_3(\lambda)\}
\]
obtained in~\cite[Corollary 4]{Shevtsova2014JMAA}, where 
\[
 q_3(\lambda) &\coloneqq& \sup_{x>0} \frac6{x^3}\abs{\mathrm{e}^{\mathrm{i}x} -1-\mathrm{i}x-\frac{(\mathrm{i}x)^2}2-\lambda\frac{(\mathrm{i}x)^3}6} \,\ \ge \,\ 1-\lambda \quad\text{ for } 0\le\lambda\le1/2,
\]
with the final inequality following from considering $x\downarrow 0$.
Indeed, for every $\rho\ge 1$, we have  $A(\rho)<1$  by Lemma~\ref{Lem:A_and_B}  and hence get  
\[
 \inf_{0<\lambda<1/2}\{\lambda A(\varrho)+q_3(\lambda)\} &\ge& 
 %  \inf_{0<\lambda<1}\{\lambda A(\varrho)+1-\lambda\}\ge A(\rho)\quad \text{for all } \rho\ge1.
 \inf_{0<\lambda<1/2}\{\lambda A(\varrho)+ 1- \lambda\} 
 \,\ =\,\ \frac{A(\rho)+1}2 \,\ > \,\   A(\rho) . % \quad \text{ for every }\rho\ge1.
\]
\end{Example}

%%%%%%%%%%%%%%%%%%%%%%%%%%%%%%%%%%%%%%%%%%%%%%%%%%%%%%%%%%%%%%%%%%%%%%%%%%%%%%%%%%%%%% Example 1.5
\begin{Example}
%%%%%%%%%%%%%%%%%%%%%%%%%%%%%%%%%%%%%%%%%%%%%%%%%%%%%%%%%%%%%%%%%%%%%%%%%%%%%%%%%%%%%%%%%%%%%%%%%%
Applying Theorem~\ref{Thm:Main} to $E=\R$ and $f=|\cdot|^3$ in the i.i.d.~case yields: For i.i.d. $X_i\sim P \in\cP_3$, we have
\la        \label{Eq:Third_abs_moment_ineq}
 \left| \E \Big|\widetilde{\textstyle \sum_{i=1}^nX_i}\Big|^3- \widetilde{\mathrm{B}_{n,\frac12}}
    |\cdot|^3    \right|  &\le& \frac{B(\varrho(P))}{\sqrt{n}},
\al
by $\| f'' \|^{}_{\mathrm{L} } = 6$, where by formula~\eqref{Eq:Cent_3rd_abs_mom_sym_bin} stated and proved below, we have explicitly 
\[
\widetilde{\mathrm{B}_{n,\frac12}}   |\cdot|^3  &=&
 \left\{\begin{array}{ll}  \left(2n^{\frac12} +n^{-\frac12}- n^{-\frac32}\right) \mathrm{b}^{}_{n,\frac12}(\lfloor\frac{n}2\rfloor)
    & \text{ if $n$ is odd},\\
   2n^{\frac12}\mathrm{b}^{}_{n,\frac12}(\frac{n}2)  & \text{ if $n$ is even.} \end{array}\right.
\]
\end{Example}

Let us note that $\widetilde{\mathrm{B}_{n,\frac12}}|\cdot|^3$ can not be replaced by any other function of $n$ without invalidating~\eqref{Eq:Third_abs_moment_ineq}, since the R.H.S. of~\eqref{Eq:Third_abs_moment_ineq} is zero if the $X_i$ are symmetrically Bernoulli-distributed; an analogous remark applies to every application of Theorem~\ref{Thm:Main}  in the i.i.d. case.

In Theorem~\ref{Rem:Main_thm_rewritten} below, we rewrite Theorem~\ref{Thm:Main} in terms of 
Zolotarev's distance $\zeta_3$. On the one hand this actually prepares for the proof of  Theorem~\ref{Thm:Main}.
On the other hand it allows, by simply using the triangle inequality combined with Theorem~\ref{Thm:epsilon_n} 
below, to obtain the quite sharp normal approximation result in Theorem~\ref{Thm:Normal_appr}.
Since in turn the proof of Theorem~\ref{Thm:epsilon_n} uses $\zeta_4$, let us recall here the definition
and  some basic and well-known properties of $\zeta_s$ in general. For more properties of Zolotarev distances 
needed in the present paper, including new results as well as apparently previously unpublished detailed proofs  
of some ``well-known'' results, we refer to section~\ref{Sec:zeta-metrics} below. Standard references on 
$\zeta$-distances include the monographs~\cite[Chapter~1]{Zolotarev1986}, \cite{Rachev1991}, \cite[Chapter~2]{Senatov1998}.

We will use the notation introduced around~\eqref{Eq:Def_Hoelder-constant}, here with $I=E=\R$.

%%%%%%%%%%%%%%%%%%%%%%%%%%%%%%%%%%%%%%%%%%%%%%%%%%%%%%%%%%%%%%%%%%%%%%%%%%%%%%%%%%%%%%% Definition 1.6
\begin{defi}[$\zeta$-distances]                                              \label{Def:zeta-distances}
%%%%%%%%%%%%%%%%%%%%%%%%%%%%%%%%%%%%%%%%%%%%%%%%%%%%%%%%%%%%%%%%%%%%%%%%%%%%%%%%%%%%%%%%%%%%%%%%%%%%%%
Let $s>0$. With $m\coloneqq \lceil s-1\rceil \in \N_0$ and 
$\alpha \coloneqq s-m \in\mathopen]0,1\mathclose]$, we put
\[ 
   \cF_s &\coloneqq&  \{ f \in \cC^{m,\alpha}(\R,\R) : \|f^{(m)}\|^{}_{\mathrm{L},\alpha} \le 1\}, \qquad
  \cF_s^\infty \,\ \coloneqq \,\  \{   f \in \cF_s : f \text{ bounded}\}.
\]
For $P,Q \in \Prob(\R)$ then 
\la                                                              \label{Eq:Def_zeta_s}
  \zeta_s(P,Q) &\coloneqq& \sup_{f\in\cF_s^\infty} |Pf -Qf|
\al
is called the {\em Zolotarev distance of order $s$} from $P$ to $Q$, and one further defines a   
\textit{weighted variation distance} as 
\[
  \nu_s(P,Q) &\coloneqq& \int |x|^s\,\dd|P-Q|(x),
\]
which is also called the $s$-th \textit{absolute pseudomoment}~\cite[p.~67]{Zolotarev1986}.
\end{defi}

Let us note that in~\cite[p.~44]{Zolotarev1986} and \cite[p.~100]{Senatov1998}, our $\cF^\infty_s$ is denoted 
by $\cF_s$, and that in these books our $\cF_s$ is implicitly used without any convenient notation.
The latter may have led to some of the clearly existing confusion in the literature.
For example, one finds in several publications,
usually obscured by employing  random variable notation, 
in effect the definition~\eqref{Eq:Def_zeta_s}   with $\cF_s$  in place of $\cF_s^\infty$, 
which makes sense,  and then no difference by the apparantly not completely trivial 
Theorem~\ref{Thm:First_facts_on_zeta_distances}(d) below, iff $P,Q\in\Prob_s(\R)$.
As a recent  example of such an unclear ``definition'' without assuming $P,Q\in\Prob_s(\R)$, we 
can mention \cite[(8), the case of $s=1$, $\mu=\nu$ the standard Cauchy law, 
once $Y=X$ and once $Y=-X$, $f$ the identity]{Neiniger_Sulzbach} 
where, however, the error is immediately admitted.

%%%%%%%%%%%%%%%%%%%%%%%%%%%%%%%%%%%%%%%%%%%%%%%%%%%%%%%%%%%%%%%%%%%%%%%%%%%%%%%%%%%%%%%%%%%%%%%%%%%%   Theorem 1.7
\begin{Thm}[Well-known facts about $\zeta_s$]                             \label{Thm:First_facts_on_zeta_distances}
%%%%%%%%%%%%%%%%%%%%%%%%%%%%%%%%%%%%%%%%%%%%%%%%%%%%%%%%%%%%%%%%%%%%%%%%%%%%%%%%%%%%%%%%%%%%%%%%%%%%%%%%%%%%%%%%%%%%%%%
Let $s = m+\alpha$ be as in Definition~\ref{Def:zeta-distances}. 

\smallskip{\rm\textbf{(a)}}
For $P,Q\in\Prob(\R)$, the  value of $\zeta_s(P,Q)$  does not change if in the definition of~$\cF_s$
the functions~$f$ are assumed to be $E$-valued rather than $\R$-valued, with $E$ any
Banach space not degenerated to one point.  

\smallskip{\rm\textbf{(b)}} On  $\Prob(\R)$, $\zeta_s$ is an extended metric, that is, a metric except that
it may also assume the value $\infty$.   

\smallskip{\rm\textbf{(c)}} For  $P\in\Prob(\R)$ and $Q \in \Prob_s(\R)$, we have the equivalence chain
\la                                                                            \label{Eq:Equivalences_zeta_s_finite}
  \zeta_s(P,Q) <\infty &\Leftrightarrow& P\in \Prob_s(\R) \text{ and } \mu_j(P)=\mu_j(Q) \text{ for }j\in\{1,\ldots,m\} \\
   &\Leftrightarrow&   P\in \Prob_s(\R) \text{ and } \zeta_s\left(P,Q\right) 
          \le \frac{\Gamma(1+\alpha)}{\Gamma(1+s)}\nu_s(P,Q).   \nonumber 
\al 
Hence, if $c_1,\ldots,c_m\in\R$ are given, then $\zeta_s$ is a metric on the (possibly empty) set 
$\{P\in\Prob_s(\R) :   \mu_j(P)=c_j\text{ for } j \in\{1,\ldots,m\}\}$.
In particular, $\zeta_3$ is a metric on $\widetilde{\cP_3}$.

\smallskip{\rm\textbf{(d)}} Let $P,Q\in\Prob_s(\R)$. Then we may omit the boundedness condition on $f$ 
in the definition~\eqref{Eq:Def_zeta_s}, that is, we have 
\la                              \label{Eq:zeta_s_in_Prob_s}
  \zeta_s(P,Q) &=& \sup_{f\in\cF_s} |Pf -Qf| , 
\al
and we further have  
\la                                                      \label{Eq:Pf-Qf_vs_zeta(P,Q)}
   | Pf -Qf | &\le& \|f^{(m)}\|^{}_{\mathrm{L},\alpha}\ \zeta_s(P,Q) \quad\text{ for } f\in\cC^{m,\alpha}(\R,\R).
\al
\end{Thm}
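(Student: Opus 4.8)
\textbf{Proof plan for Theorem~\ref{Thm:First_facts_on_zeta_distances}.}

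The plan is to treat the four parts essentially independently, using only elementary Taylor-type estimates and the definitions.

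For part \textbf{(a)}, one direction is trivial since scalar functions embed into any Banach space $E$ via a choice of unit vector. For the reverse, given $f\in\cC^{m,\alpha}(\R,E)$ with $\|f^{(m)}\|^{}_{\mathrm{L},\alpha}\le1$, I would apply each continuous linear functional $\Lambda\in E^*$ of norm $\le1$ to get a scalar function $\Lambda\circ f\in\cF_s$, observe $\Lambda(Pf-Qf)=P(\Lambda f)-Q(\Lambda f)$, and take the supremum over such $\Lambda$ using the Hahn--Banach corollary $|v|=\sup_{\|\Lambda\|\le1}|\Lambda v|$. The boundedness condition on $f$ transfers to $\Lambda\circ f$ without trouble.

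For part \textbf{(b)}: symmetry is clear, and finiteness of values in $[0,\infty]$ is built in. Non-negativity is obvious; to see $\zeta_s(P,Q)=0\Rightarrow P=Q$, note that $\cF_s^\infty$ contains enough functions (e.g.\ suitable smooth bumps, or after the standard mollification any bounded Lipschitz function up to a positive scaling factor) to separate distinct probability measures; alternatively one uses that $\cF_s^\infty$ determines characteristic functions via $x\mapsto \eps^{s}\cos(tx)$ for small $\eps$. The triangle inequality is immediate from $|Pf-Rf|\le|Pf-Qf|+|Qf-Rf|$ and taking the supremum. I expect \textbf{(b)} to be routine.

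For part \textbf{(c)}, the heart of the matter: the first implication in the equivalence chain. If $\zeta_s(P,Q)<\infty$ with $Q\in\Prob_s(\R)$, then testing against the bounded truncations of monomials $x^j$ for $j\le m$ — more precisely against $x\mapsto x^j\chi(x/R)$ for a fixed smooth cutoff $\chi$ and letting $R\to\infty$, after noting these lie in a bounded multiple of $\cF_s^\infty$ — forces both $P\in\Prob_s(\R)$ (otherwise the $j=m$, or rather a near-$s$ power, test function blows up the difference) and $\mu_j(P)=\mu_j(Q)$. The cleanest route for the moment-matching and the $\Prob_s$ claim is probably to first establish the quantitative bound in the last line of \eqref{Eq:Equivalences_zeta_s_finite}: for $P,Q\in\Prob_s(\R)$ with matching moments up to order $m$, and $f\in\cF_s^\infty$, write the $m$-th order Taylor remainder of $f$ at $0$ and use that $f^{(m)}$ is $\alpha$-H\"older with constant $\le1$ to get the pointwise bound $|f(x)-\sum_{j=0}^m f^{(j)}(0)x^j/j!|\le |x|^s/\Gamma(1+s)\cdot\Gamma(1+\alpha)$ — this is the standard sharp constant for the fractional Taylor remainder — integrate against $|P-Q|$, and use that the polynomial part integrates to zero by moment matching. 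The reverse implications are then easy: the quantitative bound gives finiteness, and finiteness plus the monomial-testing argument gives moment matching and $P\in\Prob_s(\R)$. The main obstacle here is getting the constant $\Gamma(1+\alpha)/\Gamma(1+s)$ exactly right and justifying the limiting/truncation argument for non-integer $s$; this requires the sharp form of Taylor's theorem with H\"older remainder, which I would either cite or prove via the integral representation of the remainder. The final sentence about $\zeta_3$ being a metric on $\widetilde{\cP_3}$ is then immediate since $m=2$ and $\widetilde{\cP_3}\subseteq\Prob_3(\R)$ with prescribed first two moments $0$ and $1$.

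For part \textbf{(d)}, given $P,Q\in\Prob_s(\R)$ and $f\in\cF_s$, I would approximate $f$ by bounded functions: let $f_R\coloneqq f\cdot\chi(\cdot/R)$ or, better, modify $f$ only outside $[-R,R]$ to make it bounded while keeping $f_R\in c_R\cF_s$ with $c_R\to1$; then $Pf_R\to Pf$ and $Qf_R\to Qf$ by dominated convergence (here $P,Q\in\Prob_s$ supplies the integrable majorant $1+|x|^s$), giving \eqref{Eq:zeta_s_in_Prob_s}. The estimate \eqref{Eq:Pf-Qf_vs_zeta(P,Q)} for general $f\in\cC^{m,\alpha}(\R,\R)$ follows by homogeneity: if $\|f^{(m)}\|^{}_{\mathrm{L},\alpha}=L<\infty$ then $f/L\in\cF_s$ (up to the harmless additive/linear terms of degree $\le m$ which cancel in $Pf-Qf$ once one knows $\zeta_s(P,Q)<\infty$ forces moment matching — or one simply notes such polynomial perturbations do not affect $Pf-Qf$ because... wait, they do affect it unless moments match; so one invokes part (c) first), and apply \eqref{Eq:zeta_s_in_Prob_s}. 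If $L=\infty$ the inequality is vacuous. The one subtlety is that $\cF_s$ is not closed under adding polynomials of degree $\le m$, so to reduce a general $f$ to an element of $\cF_s$ one subtracts its degree-$m$ Taylor polynomial at $0$; this changes neither $\|f^{(m)}\|^{}_{\mathrm{L},\alpha}$ nor, by part (c) (moment matching is forced as soon as $\zeta_s(P,Q)<\infty$, else both sides are $+\infty$ and again nothing to prove), the quantity $Pf-Qf$.

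Overall, the genuinely non-trivial step is the sharp fractional Taylor remainder bound underlying the constant in part \textbf{(c)}; everything else is truncation-and-dominated-convergence bookkeeping.
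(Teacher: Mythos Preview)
Your overall architecture matches the paper's: Hahn--Banach for (a), routine for (b), Taylor remainder with the sharp fractional constant for the quantitative part of (c), and approximation by bounded functions for (d). Two points deserve flagging.

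The real work sits in your part~(d), and you underestimate it. The naive truncation $f_R=f\cdot\chi(\cdot/R)$ does \emph{not} stay in $c_R\cF_s$ with $c_R\to1$: the product-rule terms $f^{(k)}(x)\,\chi^{(m-k)}(x/R)R^{-(m-k)}$ with $k<m$ are of size $\sim R^{s-k}\cdot R^{-(m-k)}=R^{\alpha}$ on the transition zone, and their $\alpha$-H\"older constants are only $O(1)$, not $o(1)$. You sense this and retreat to ``modify $f$ only outside $[-R,R]$'', which is the right instinct but is precisely where the paper invests its effort: it builds (Lemma~\ref{Lem:cF_s_properties}) an approximant $f_n$ that equals $\tfrac{n-1}{n}f$ on $[-n,n]$ and is extended by two-point Hermite interpolation down to zero on suitably long intervals $[n,b_n]$ and $[a_n,-n]$, with $b_n,|a_n|$ chosen large enough to force the $(m{+}1)$st derivative of the interpolant to be tiny and hence $f_n\in\cF_s^\infty$ exactly. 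This construction also supplies the nonnegative approximants to $|\cdot|^s$ needed for the implication $\zeta_s(P,Q)<\infty\Rightarrow P\in\Prob_s(\R)$ in (c) via Fatou's lemma; your ``near-$s$ power test function blows up'' remark is the right intuition but needs these positive bounded approximants to be made rigorous. Note also that the paper proves (d) \emph{before} (c) and then uses \eqref{Eq:zeta_s_in_Prob_s} with the unbounded test functions $n\cdot(\cdot)^j\in\cF_s$ (their $m$th derivative is constant, so the H\"older seminorm vanishes) to force $\mu_j(P)=\mu_j(Q)$ by letting $n\to\infty$; this is cleaner than your cutoff-of-monomials route.

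One minor confusion: for \eqref{Eq:Pf-Qf_vs_zeta(P,Q)} you do not need to subtract any Taylor polynomial. The class $\cF_s$ imposes no normalisation at $0$; if $L\coloneqq\|f^{(m)}\|_{\mathrm L,\alpha}\in(0,\infty)$ then $f/L\in\cF_s$ outright, and \eqref{Eq:zeta_s_in_Prob_s} plus linearity gives the bound directly. The paper dispatches this in one line.
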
 

References or proofs for Theorem~\ref{Thm:First_facts_on_zeta_distances} are given in section~\ref{Sec:zeta-metrics} on p.\,\pageref{Sec:zeta-metrics}, together with further facts about $\zeta_s$.  With the above preparations, we can state:
%%%%%%%%%%%%%%%%%%%%%%%%%%%%%%%%%%%%%%%%%%%%%%%%%%%%%%%%%%%%%%%%%%%%%%%%%%%%%%%%%%%%%%%%%%%%%%% Theorem 1.8
\begin{Thm}[essentially Theorem~\ref{Thm:Main} rewritten]               \label{Rem:Main_thm_rewritten}           
%%%%%%%%%%%%%%%%%%%%%%%%%%%%%%%%%%%%%%%%%%%%%%%%%%%%%%%%%%%%%%%%%%%%%%%%%%%%%%%%%%%%%%%%%%%%%%%%%%
Let $n\in\N$ and $P_i, \sigma_i,Q_i,\sigma,\rho_i$ for $i\in\{1,\ldots,n\}$  be as in Theorem~\ref{Thm:Main}. 
Then we have                           
\la           \label{Eq:Main_ineq_rewritten}
   \zeta_3\left( \widetilde{\bigconv_{i=1}^n P_i} \ ,\ \widetilde{\,\bigconv_{i=1}^n Q_i}\,  \right)
  &\le&   \frac{1}{6\sigma^3}\sum\limits_{i=1}^n \sigma_i^3 B(\varrho_i),
\al
with equality whenever each  $P_i$ is  a two-point law and also the centred third moments of the $P_i$ are
all $\ge 0$ or all $\le0$.
\end{Thm}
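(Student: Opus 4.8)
The plan is to deduce Theorem~\ref{Rem:Main_thm_rewritten} from Theorem~\ref{Thm:Main} essentially by translating the inequality~\eqref{Eq:Main_inequality_non-i.i.d.} into the language of $\zeta_3$. First I would observe that, for the two laws $\widetilde{\bigconv_{i=1}^n P_i}$ and $\widetilde{\bigconv_{i=1}^n Q_i}$, both lying in $\widetilde{\cP_3}$, Theorem~\ref{Thm:First_facts_on_zeta_distances}(c)--(d) guarantees that $\zeta_3$ is a genuine (finite-valued) metric on $\widetilde{\cP_3}$ and that it admits the variational representation $\zeta_3(P,Q)=\sup_{f\in\cF_3}|Pf-Qf|$ over the unbounded test class $\cF_3=\{f\in\cC^{2,1}(\R,\R):\|f''\|^{}_{\mathrm{L}}\le1\}$. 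Hence the supremum defining $\zeta_3$ ranges precisely over the normalized version of the functions appearing in Theorem~\ref{Thm:Main}.

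Next, for each $f\in\cF_3$ we have $\|f''\|^{}_{\mathrm{L}}\le1$, so the right-hand side of~\eqref{Eq:Main_inequality_non-i.i.d.} is at most $\frac{1}{6\sigma^3}\sum_{i=1}^n\sigma_i^3 B(\varrho_i)$; taking the supremum over $f\in\cF_3$ and invoking~\eqref{Eq:zeta_s_in_Prob_s} yields~\eqref{Eq:Main_ineq_rewritten}. (By Theorem~\ref{Thm:First_facts_on_zeta_distances}(a) it is irrelevant whether the test functions are scalar- or Banach-space-valued, so the scalar formulation of $\zeta_3$ suffices.) For the equality assertion, suppose each $P_i$ is a two-point law with equi-signed centred third moments. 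Then the equality clause of Theorem~\ref{Thm:Main}, applied with $E=\R$ and $f(x)=\frac16 x^3$ — which lies in $\cF_3$ since $\|f''\|^{}_{\mathrm{L}}=1$ — shows that $|\widetilde{\bigconv P_i}f-\widetilde{\bigconv Q_i}f|$ already equals $\frac{1}{6\sigma^3}\sum_{i=1}^n\sigma_i^3 B(\varrho_i)$. Combined with the upper bound just proved, this forces equality in~\eqref{Eq:Main_ineq_rewritten}.

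The one genuinely non-routine point is the passage from the bounded test class $\cF_3^\infty$ in the definition~\eqref{Eq:Def_zeta_s} of $\zeta_3$ to the unbounded class $\cF_3$ used above; but this is exactly Theorem~\ref{Thm:First_facts_on_zeta_distances}(d), which applies since both measures lie in $\Prob_3(\R)$, and in fact in $\widetilde{\cP_3}$. One should also check that the standardized convolutions indeed belong to $\cP_3$, equivalently that $\sigma>0$ and all third absolute moments are finite — immediate from $P_i\in\cP_3$ and the obvious additivity of variances and finiteness of $\nu_3$ under convolution. Beyond these bookkeeping remarks the proof is a one-line application of Theorem~\ref{Thm:Main}, which is why the authors label this result ``essentially Theorem~\ref{Thm:Main} rewritten''; the real content, and the real obstacle, sits in Theorem~\ref{Thm:Main} itself rather than here.
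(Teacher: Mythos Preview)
Your proof is correct and follows essentially the same approach as the paper: the inequality comes from testing Theorem~\ref{Thm:Main} against each $f\in\cF_3$ and taking the supremum, and the equality statement comes from applying the equality clause of Theorem~\ref{Thm:Main} to $f(x)=x^3/6\in\cF_3$ via the unbounded-test-class representation~\eqref{Eq:zeta_s_in_Prob_s}. Your additional bookkeeping remarks (membership in $\widetilde{\cP_3}$, irrelevance of the target Banach space, and the need for Theorem~\ref{Thm:First_facts_on_zeta_distances}(d) only at the equality step) are accurate and in the spirit of the paper's brief justification given right after the statement.
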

Indeed, if Theorem~\ref{Thm:Main} is assumed to be true, then applying the  definition of $\zeta_3$ immediately 
yields inequality~\eqref{Eq:Main_ineq_rewritten}, and using also~\eqref{Eq:zeta_s_in_Prob_s} 
from Theorem~\ref{Thm:First_facts_on_zeta_distances}(d) yields the accompanying equality statement.
Conversely, if~\eqref{Eq:Main_ineq_rewritten} is proved, then, using~\eqref{Eq:Pf-Qf_vs_zeta(P,Q)}, we 
get Theorem~\ref{Thm:Main} in the case of $E=\R$ and except for the equality statement.

%%%%%%%%%%%%%%%%%%%%%%%%%%%%%%%%%%%%%%%%%%%%%%%%%%%%%%%%%%%%%%%%%%%%%%%%%%%%%%%%%%%%%%%%%%%%%%%%%%% Remark 1.9
\begin{Rem}
%%%%%%%%%%%%%%%%%%%%%%%%%%%%%%%%%%%%%%%%%%%%%%%%%%%%%%%%%%%%%%%%%%%%%%%%%%%%%%%%%%%%%%%%%%%%%%%%%%%%%%%%%%%%%%
Under the assumptions of Theorem~\ref{Rem:Main_thm_rewritten}, we have the equivalence 
\la                                                           \label{Eq:Error=0_iff_bound=0}
  \text{L.H.S.\eqref{Eq:Main_ineq_rewritten}} = 0 &\Leftrightarrow& \text{R.H.S.\eqref{Eq:Main_ineq_rewritten}} = 0. 
\al
Here the implication ``$\Leftarrow$'' of course follows trivially  from~\eqref{Eq:Main_ineq_rewritten}. Conversely,
if we have $\text{L.H.S.\eqref{Eq:Main_ineq_rewritten}} = 0$, 
then we get $\widetilde{\bigconv_{i=1}^n P_i} = \widetilde{\,\bigconv_{i=1}^n Q_i}$
and hence,  assuming from now on  without loss of generality the $P_i$ to 
be centred,  and recalling that  $\sigma(P_i)=\sigma_i=\sigma(Q_i)$ for each~$i$, we have
\la                                                \label{bigconv_P_i=bigconv_Q_i} 
  \bigconv_{i=1}^n P_i &=& \bigconv_{i=1}^n Q_i . 
\al   
We now use some well-known elementary facts about  cumulants,
for which we may refer to~\cite{Mattner1999,Hald2000,Mattner2004}.
Cumulants are certain functions $\kappa_\ell:\Prob_\ell(\R)\rightarrow\R$ for $\ell\in\N$,
most importantly $\kappa_1=\mu(\cdot)$, $\kappa_2=\sigma^2(\cdot)$,   
$\kappa_3=\int(x-\mu(P))^3\,\dd P(x)$ for $P\in\Prob_3(\R)$, and
$\kappa_4=\int(x-\mu(P))^4\,\dd P(x) - 3\sigma^4(P)$ for $P\in\Prob_4(\R)$, 
designed to enjoy the additivity
\la                                    \label{Eq:Cumulants_additive}
   \kappa_\ell(P\ast Q) &=&  \kappa_\ell(P) +  \kappa_\ell(Q)\quad\text{ for } \ell\in\N\text{ and }
     P,Q\in\Prob_\ell(\R).
\al
Observing now that, for a centred $P\in\Prob_4(\R)$, we have 
\[
  \kappa_4(P) &=&  \int x^4\dd P(x) -3\sigma^4(P)
 \,\ \ge\,\ \left(\int x^2\dd P(x)\right)^2-  3\sigma^4(P) = -2\sigma^4(P)
\]
with equality throughout iff $P=\frac12\left(\delta_{-\sigma(P)} + \delta_{\sigma(P)}\right)$,
by, say,  Jensen's inequality with the strictly convex square function 
and by centredness of $P$, we get from \eqref{bigconv_P_i=bigconv_Q_i}, 
using~\eqref{Eq:Cumulants_additive} with $\ell=4$ in the first step,
\[
  \sum_{i=1}^n\kappa_4(P_i) &=& \sum_{i=1}^n\kappa_4(Q_i) \,\ =\,\ \sum_{i=1}^n\left(-2\sigma^4(Q_i)\right) 
 \,\ =\,\ \sum_{i=1}^n\left(-2\sigma^4(P_i)\right),
\]
and thus  $P_i=Q_i$ and hence $\varrho(P_i)=1$  
for each $i$, and hence   $\text{R.H.S.\eqref{Eq:Main_ineq_rewritten}} = 0$
due to $B(1)=0$.

Thus the error bound~\eqref{Eq:Main_ineq_rewritten} in Theorem~\ref{Rem:Main_thm_rewritten} enjoys 
the property~\eqref{Eq:Error=0_iff_bound=0} in analogy to classical refinements of the Berry--Esseen bound for normal approximations to convolution products first obtained in the i.i.d.~case, after a preliminary result of Zolotarev~\cite{Zolotarev1965},  by Paulauskas~\cite{Paulauskas_1969}, and then quickly generalized or sharpened in publications up to 1973 by Sazonov~\cite{Sazonov_1972}, Nagaev and Rotar'~\cite{Nagaev_Rotar}, and  Zolotarev~\cite{Zolotarev1973}; reviews by Sazonov~\cite[pp.~9, 68]{Sazonov_1981}, Rotar'~\cite[\S2]{Rotar1982}, Petrov~\cite[pp.~190--191, subsections 5.10.16--5.10.18]{Petrov_1995}, and~ Zolotarev~\cite[section 6.5.1]{Zolotarev1986} point to further relevant works, including several ones by the authors already mentioned here and by Ulyanov, in particular \cite{Ulyanov1976,Ulyanov1978}, 
to which one can add, among others, the papers of Shiganov~ \cite{Shiganov_1987}, Paditz~\cite{Paditz_1988}, and, treating  asymptotic expansions, Yaroslavtseva~\cite{Yaroslavtseva2008}.

In contrast to our bound in~\eqref{Eq:Main_ineq_rewritten}, those refinements have to use some so-called (absolute) pseudo-- or difference--moments instead of ordinary absolute moments of the involved distributions. 
\end{Rem}

%%%%%%%%%%%%%%%%%%%%%%%%%%%%%%%  Subsection 1.5
\subsection{Normal approximation}              \label{Subsec:NormApprox}
%%%%%%%%%%%%%%%%%%%%%%%%%%%%%%%%%%%%%%%%%%%%%%%%%%%%%%%%%%%%%%%%%%%%%%%%%%%%%%%%%%%%%%%%%%%%%%%%%%
Coming now to the normal approximation results following from Theorem~\ref{Rem:Main_thm_rewritten}, let us first consider 
in Theorem~\ref{Thm:Normal_appr} below the i.i.d.~case. There
\la                                                        \label{Eq:Def_epsilon_n}
 \varepsilon_n \coloneqq \zeta_3 \left(\widetilde{\mathrm{B}_{n,\frac12}},\mathrm{N} \right)\quad\text{ for }  n\in\N
\al
plays the role of a higher order error term, as is made explicit by the following auxiliary result.

%%%%%%%%%%%%%%%%%%%%%%%%%%%%%%%%%%%%%%%%%%%%%%%%%%%%%%%%%%%%%%%%%%%%%%%%%%%%%%%%%%%% Theorem 1.10
\begin{Thm}                                                               \label{Thm:epsilon_n}
%%%%%%%%%%%%%%%%%%%%%%%%%%%%%%%%%%%%%%%%%%%%%%%%%%%%%%%%%%%%%%%%%%%%%%%%%%%%%%%%%%%%%%%%%%%%%%%%%%
For $n\in\N$,  we have, with the first equality to be read from right to left due to the $O(n^{-2})$,
\la                         \label{Eq:Ineq_eps_n}
\qquad  \frac{1}{6\sqrt{2\pi} n} +O \left(\frac1{n^{2}} \right)
  & =& \frac16\left\{\begin{array}{ll}
      \left|\left(2n^{\frac12} +n^{-\frac12}- n^{-\frac32}\right)
        \mathrm{b}^{}_{n,\frac12}(\lfloor\tfrac{n}{2}\rfloor) -\frac{4}{\sqrt{2\pi}}\right|  &\text{ if $n$ is odd},\!  \\
        \left| 2n^{\frac12} \mathrm{b}^{}_{n,\frac12}(\tfrac{n}{2}) -\frac{4}{\sqrt{2\pi}}\right|
  \phantom{\int\limits_0^{\frac11}}   
&\text{ if $n$ is even}\!\!\! 
\end{array}   \right\}       \\ \nonumber
 &=&  \left| \left( \widetilde{\mathrm{B}_{n,\frac12}} -\mathrm{N} \right) \tfrac{|\,\cdot\, |^3}{6}\right|  
  \,\ \le \,\   \varepsilon_n \\   \nonumber
 & < & \frac{1}{3\sqrt{2\pi}n} + \bigg(\frac{4+\zeta(\frac12)}{\sqrt{2\pi}}-1\bigg)\frac1{6n^{3/2}}
\\ \nonumber
&<&\frac{0.1330}{n} +\frac{0.0022}{n^{3/2}} \,\ \le \,\ \frac{0.1352}{n},
\al
where $\zeta(\cdot)$ is the Riemann zeta-function, in particular $\zeta(\frac12)=-1.4603\ldots.$
\end{Thm}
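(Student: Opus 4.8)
The statement bundles together four kinds of assertions about $\varepsilon_n = \zeta_3(\widetilde{\mathrm B_{n,\frac12}},\mathrm N)$: (i) the closed-form evaluation of $(\widetilde{\mathrm B_{n,\frac12}}-\mathrm N)\tfrac{|\cdot|^3}{6}$ in terms of the central binomial probability $\mathrm b^{}_{n,\frac12}(\lfloor n/2\rfloor)$, with the two parities treated separately; (ii) the asymptotic $\tfrac{1}{6\sqrt{2\pi}n}+O(n^{-2})$ for that quantity; (iii) the lower bound $\varepsilon_n \ge |(\widetilde{\mathrm B_{n,\frac12}}-\mathrm N)\tfrac{|\cdot|^3}{6}|$; and (iv) the explicit upper bound for $\varepsilon_n$ with the Riemann-$\zeta$ term. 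I would address them in that order, as (iii) and (iv) are the only ones needing genuine $\zeta_3$-theory, while (i)–(ii) are pure moment bookkeeping.

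For (i), I would use $\widetilde{\mathrm B_{n,\frac12}}|\cdot|^3$, which is computed in formula \eqref{Eq:Cent_3rd_abs_mom_sym_bin} (stated and proved later in the paper), together with the elementary fact $\mathrm N|\cdot|^3 = \E|Z|^3 = 2\sqrt{2/\pi} = 4/\sqrt{2\pi}$ for $Z\sim\mathrm N$; subtracting and dividing by $6$ gives exactly the bracketed expression, with the sign of the difference (hence the absolute value) being whatever it is — one can note that $\widetilde{\mathrm B_{n,\frac12}}|\cdot|^3 < \mathrm N|\cdot|^3$ so the inner quantity is the negative of $\frac16(\mathrm N - \widetilde{\mathrm B_{n,\frac12}})|\cdot|^3$, but the $|\cdot|$ makes this moot. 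For (ii), I would invoke the classical central binomial asymptotics $\mathrm b^{}_{n,\frac12}(\lfloor n/2\rfloor) = \sqrt{2/(\pi n)}\,(1 - \tfrac{1}{4n} + O(n^{-2}))$ (Stirling with one correction term, parity-uniform after accounting for the shift $\lfloor n/2\rfloor$ versus $n/2$), plug into the bracket, and watch the $O(n^{-1/2})$ and $O(n^{-3/2})$ terms cancel against $4/\sqrt{2\pi}$, leaving the coefficient $\tfrac{1}{6\sqrt{2\pi}}$ on the $n^{-1}$ term. Here the even and odd cases must be checked to give the same leading behaviour; the odd-case factor $2n^{1/2}+n^{-1/2}-n^{-3/2}$ contributes its own lower-order terms which I expect to conspire correctly — this is the one spot demanding care rather than invocation.

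For (iii), the lower bound is immediate: $f(x) = \tfrac{1}{6}|x|^3$ satisfies $f\in\cC^{2,1}(\R,\R)$ with $\|f''\|^{}_{\mathrm L} = 1$, so $f\in\cF_3$, and since $\widetilde{\mathrm B_{n,\frac12}}, \mathrm N\in\Prob_3(\R)$ share their first two moments, Theorem~\ref{Thm:First_facts_on_zeta_distances}(d), specifically \eqref{Eq:zeta_s_in_Prob_s}, lets me take the supremum over $\cF_3$ (not merely $\cF_3^\infty$), giving $\varepsilon_n \ge |\widetilde{\mathrm B_{n,\frac12}}f - \mathrm N f|$, which is exactly the claimed quantity.

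**The main obstacle.**
The hard part will be the explicit upper bound (iv): producing $\varepsilon_n < \tfrac{1}{3\sqrt{2\pi}n} + \bigl(\tfrac{4+\zeta(1/2)}{\sqrt{2\pi}}-1\bigr)\tfrac{1}{6n^{3/2}}$ with honest constants. I would pass through $\zeta_4$: by a standard interpolation/smoothing bound for Zolotarev metrics (monotonicity-type inequalities in the order $s$, available in the paper's section on $\zeta$-metrics), $\zeta_3(\widetilde{\mathrm B_{n,\frac12}},\mathrm N)$ can be controlled by a combination of a $\zeta_4$-term — which is of order $n^{-1}$ because the third cumulants of both laws vanish, so the leading Edgeworth discrepancy is at the fourth-cumulant level — and a correction from the non-smoothness, which is where the lattice span $h = 2/\sqrt{n}$ of $\widetilde{\mathrm B_{n,\frac12}}$ and hence the $\sum_k k^{-1/2}$-type sums producing $\zeta(1/2)$ enter. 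Concretely I expect to expand $\zeta_3(\widetilde{\mathrm B_{n,\frac12}},\mathrm N)$ via test functions, use the known Krawtchouk/characteristic-function machinery for the symmetric binomial (flagged in the introduction as the tool for section~\ref{Sec:zeta3-between-N-B}), bound $\zeta_4(\widetilde{\mathrm B_{n,\frac12}},\mathrm N)$ by something like $\tfrac{|\kappa_4|}{4!}$-scale quantities plus remainder, and track every constant so that the final numerical inequalities $\tfrac{0.1330}{n}+\tfrac{0.0022}{n^{3/2}}\le\tfrac{0.1352}{n}$ hold for all $n\in\N$ (the last step just needs $0.0022/\sqrt n \le 0.0022$, i.e. $n\ge1$, plus $0.1330+0.0022 = 0.1352$). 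The delicate issue is not the shape of the bound but keeping the constants sharp enough through the $\zeta_3$-to-$\zeta_4$ reduction and the lattice-correction estimate; I would isolate the $\zeta(1/2)$-producing sum as a separate lemma to avoid burying the constant-tracking inside the main argument.
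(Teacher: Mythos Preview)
Parts (i)--(iii) of your plan are correct and match the paper: the closed form for $\widetilde{\mathrm B_{n,\frac12}}|\cdot|^3$ is exactly formula~\eqref{Eq:Cent_3rd_abs_mom_sym_bin} (derived there via Krawtchouk summation identities), the asymptotic is obtained in the paper via the local Edgeworth expansion for $\mathrm b_{n,\frac12}$ rather than raw Stirling, but these are equivalent at the precision needed, and the lower bound on $\varepsilon_n$ is indeed just $f=|\cdot|^3/6\in\cF_3$ plugged into~\eqref{Eq:zeta_s_in_Prob_s}.

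For (iv), however, there is a genuine gap. The mechanism by which the paper passes from $\zeta_3$ to $\zeta_4$ is not a direct smoothing or ``monotonicity-type'' inequality applied to the pair $(\widetilde{\mathrm B_{n,\frac12}},\mathrm N)$, and the $\sum_{k} k^{-1/2}$ sum has nothing to do with the lattice span. The actual structure is a Lindeberg telescoping: with $Y_i\sim\tfrac12(\delta_{-1/\sqrt n}+\delta_{1/\sqrt n})$ and $Z_i\sim\mathrm N_{1/\sqrt n}$ independent, set $T_k=Z_1+\cdots+Z_k+Y_{k+1}+\cdots+Y_n$, so that $T_0\sim\widetilde{\mathrm B_{n,\frac12}}$ and $T_n\sim\mathrm N$, and bound $\varepsilon_n\le\sum_{k=0}^{n-1}\zeta_3(T_k,T_{k+1})$. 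The $k=0$ term is handled by regularity and homogeneity alone, giving $n^{-3/2}\zeta_3(Y,Z)$ with $Y\sim\tfrac12(\delta_{-1}+\delta_1)$, $Z\sim\mathrm N$. For $k\ge1$, the already-accumulated normal part $Z_1+\cdots+Z_k\sim\mathrm N_{\sqrt{k/n}}$ is precisely the smoothing factor that makes Lemma~\ref{LemZeta_s<=Zeta_s+l} (with $s=3$, $t=1$, $C_{3,1}=2/\sqrt{2\pi}$) applicable, yielding $\zeta_3(T_k,T_{k+1})\le\sqrt{2/\pi}\cdot n^{-3/2}\zeta_4(Y,Z)/\sqrt{k}$. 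Summing over $k$ produces $\sum_{k=1}^{n-1}k^{-1/2}$, and Lemma~\ref{Lem:zeta(1/2)} converts this into $2\sqrt n+\zeta(\tfrac12)$. Substituting the exact values $\zeta_3(Y,Z)$ and $\zeta_4(Y,Z)$ from Example~\ref{Example:zeta_3(Q,N)_zeta_4(Q,N)} then gives the stated bound. Your proposal to ``bound $\zeta_4(\widetilde{\mathrm B_{n,\frac12}},\mathrm N)$'' directly and attach a separate lattice correction does not provide a normal convolution factor to feed into the smoothing lemma, so as written it would not go through; the Lindeberg decomposition is the missing idea.
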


The proof of Theorem~\ref{Thm:epsilon_n} is given in section~\ref{Sec:zeta3-between-N-B} on p.\,\pageref{Page:proof-of-Thm:epsilon_n}.

The above lower bound for $\epsilon_n$ holds even with equality in case of $n=1$, by 
Example~\ref{Example:zeta_3(Q,N)_zeta_4(Q,N)} below, and we conjecture that, in the general case, 
it is at least asymptotically exact.

%%%%%%%%%%%%%%%%%%%%%%%%%%%%%%%%%%%%%%%%%%%%%%%%%%%%%%%%%%%%%%%%%%  Theorem 1.11
\begin{Thm}                                              \label{Thm:Normal_appr}
%%%%%%%%%%%%%%%%%%%%%%%%%%%%%%%%%%%%%%%%%%%%%%%%%%%%%%%%%%%%%%%%%%%%%%%%%%%%%%%
For $P\in\cP_3$ and $n\in\N$, we have
\la                                          \label{Eq:Main_normal}
  \zeta_3\left(\widetilde{P^{\ast n}},\mathrm{N} \right)
  &\le & \frac{B(\rho(P))}{6\,\sqrt{n}} +\varepsilon_n, 
\al
where, on the right, the leading term for $n\rightarrow\infty$ is optimal in the sense of
\la                 \label{Eq:B-E_normal_opt_for_n_large}
 && \frac{B(\rho)}{6} \ =\ \lim_{n\rightarrow\infty} \sqrt{n}\, \zeta_3\left(\widetilde{P_\rho^{\ast n}},\mathrm{N} \right)
  \ =\ \sqrt{k} \left| \widetilde{P_\rho^{\ast k}}f - \mathrm{N}f \right|
  \quad\text{ for }\rho\in\mathopen[1,\infty\mathclose[ \text{ and } k\in\N,
 \al                                         
with $P_\rho\in\cP_3$  being the two-point law defined in~\eqref{Eq:Def_P_rho} and satisfying $\rho(P_\rho)=\rho$, and with $f\in\cF_3$ given by $f(x)=x^3/6$ for $x\in\R$, and the leading term for $\rho\rightarrow1$ is asymptotically exact in the sense of
\la                        \label{Eq::B-E_normal_opt_for_rho_small}     
  \varepsilon_n &=&
  \lim_{P\in\cP_3\colon\rho(P)\rightarrow1} \zeta_3\left(\widetilde{P^{\ast n}},\mathrm{N} \right)
   \quad\text{ for } n\in\N.
\al
\end{Thm}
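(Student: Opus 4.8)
The plan is to derive inequality~\eqref{Eq:Main_normal} by the triangle inequality for the metric $\zeta_3$ on $\widetilde{\cP_3}$ (Theorem~\ref{Thm:First_facts_on_zeta_distances}(c)), writing
\[
  \zeta_3\bigl(\widetilde{P^{\ast n}},\mathrm{N}\bigr)
   &\le& \zeta_3\bigl(\widetilde{P^{\ast n}},\widetilde{\mathrm{B}_{n,\frac12}}\bigr)
         + \zeta_3\bigl(\widetilde{\mathrm{B}_{n,\frac12}},\mathrm{N}\bigr).
\]
The first summand is bounded by $B(\rho(P))/(6\sqrt n)$ by the i.i.d.\ specialization of Theorem~\ref{Rem:Main_thm_rewritten} (equivalently, by~\eqref{Eq:Main_inequality} combined with the definition of $\zeta_3$), and the second summand equals $\varepsilon_n$ by definition~\eqref{Eq:Def_epsilon_n}. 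This immediately gives~\eqref{Eq:Main_normal}, so the bulk of the work is in the two optimality assertions.

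For~\eqref{Eq:B-E_normal_opt_for_n_large} I would argue as follows. First, for the two-point law $P_\rho$ one has, by the equality case of Theorem~\ref{Rem:Main_thm_rewritten} (each $P_i = P_\rho$ is a two-point law with equisigned third moments, since $\mu_3(P_\rho)=B(\rho)\ge0$),
\[
  \zeta_3\bigl(\widetilde{P_\rho^{\ast n}},\widetilde{\mathrm{B}_{n,\frac12}}\bigr)
   &=& \frac{B(\rho)}{6\sqrt n}.
\]
Next, by~\eqref{Eq:zeta_s_in_Prob_s} and~\eqref{Eq:Pf-Qf_vs_zeta(P,Q)} applied with $f(x)=x^3/6\in\cF_3$ (so $\|f''\|_{\mathrm L}=1$), together with the fact that $\widetilde{P_\rho^{\ast k}}$ and $\widetilde{\mathrm{B}_{k,\frac12}}$ have the same first two moments but third moments differing by exactly $B(\rho)/\sqrt k$ (a one-line cumulant computation using $\kappa_3$ additivity and $\kappa_3(\mathrm{B}_{k,\frac12})=0$ after standardization, versus $\kappa_3(P_\rho^{\ast k})= k\,B(\rho)$ rescaled by $k^{-3/2}$), one gets the exact value
\[
  \bigl|\widetilde{P_\rho^{\ast k}}f - \widetilde{\mathrm{B}_{k,\frac12}}f\bigr|
   &=& \frac{B(\rho)}{6\sqrt k},
\]
which shows $\sqrt k\,|\widetilde{P_\rho^{\ast k}}f-\widetilde{\mathrm{B}_{k,\frac12}}f| = B(\rho)/6$ for every $k$; combining this with~\eqref{Eq::B-E_normal_opt_for_rho_small} below (which gives $\zeta_3(\widetilde{\mathrm{B}_{k,\frac12}},\mathrm{N})=\varepsilon_k\to0$) one replaces $\widetilde{\mathrm{B}_{k,\frac12}}$ by $\mathrm{N}$ up to an $O(\varepsilon_k)=O(1/k)$ error. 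Since $|\widetilde{P_\rho^{\ast k}}f - \mathrm{N}f|\le \zeta_3(\widetilde{P_\rho^{\ast k}},\mathrm{N})$ and the latter is $\le B(\rho)/(6\sqrt k)+\varepsilon_k$ by~\eqref{Eq:Main_normal}, sandwiching yields both the middle equality ($\lim_{k\to\infty}\sqrt k\,\zeta_3(\widetilde{P_\rho^{\ast k}},\mathrm N)=B(\rho)/6$) and — because the test function $f(x)=x^3/6$ already achieves the exact value $B(\rho)/(6\sqrt k)$ against $\mathrm{N}$ once the $\varepsilon_k$ term is accounted for — the rightmost equality for every fixed $k$. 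The one genuinely delicate point is to see that the identity $\sqrt k\,|\widetilde{P_\rho^{\ast k}}f-\mathrm{N}f|=B(\rho)/6$ holds \emph{exactly}, not just asymptotically: this requires observing that $\mathrm{N}f = 0$ (odd function against a symmetric law), that $\widetilde{\mathrm{B}_{k,\frac12}}f=0$ likewise, and that $\widetilde{P_\rho^{\ast k}}f = \tfrac16 k^{-3/2}\kappa_3(P_\rho^{\ast k}) = \tfrac16 k^{-1/2}B(\rho)$ because $f$ is a cubic and the first two moments are standardized; so in fact $\widetilde{P_\rho^{\ast k}}f - \mathrm{N}f = B(\rho)/(6\sqrt k)$ on the nose.

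For~\eqref{Eq::B-E_normal_opt_for_rho_small}, fix $n$ and let $P$ range over $\cP_3$ with $\rho(P)\to1$. The upper bound $\limsup\zeta_3(\widetilde{P^{\ast n}},\mathrm N)\le \varepsilon_n$ is immediate from~\eqref{Eq:Main_normal} together with $B(\rho(P))\to B(1)=0$ (Lemma~\ref{Lem:A_and_B}, continuity of $B$). For the reverse inequality one uses the triangle inequality in the other direction,
\[
  \varepsilon_n \ =\ \zeta_3\bigl(\widetilde{\mathrm{B}_{n,\frac12}},\mathrm N\bigr)
   &\le& \zeta_3\bigl(\widetilde{\mathrm{B}_{n,\frac12}},\widetilde{P^{\ast n}}\bigr)
         + \zeta_3\bigl(\widetilde{P^{\ast n}},\mathrm N\bigr)
   \ \le\ \frac{B(\rho(P))}{6\sqrt n} + \zeta_3\bigl(\widetilde{P^{\ast n}},\mathrm N\bigr),
\]
where the last step is again the i.i.d.\ case of Theorem~\ref{Rem:Main_thm_rewritten}; letting $\rho(P)\to1$ makes the first term vanish and gives $\liminf\zeta_3(\widetilde{P^{\ast n}},\mathrm N)\ge\varepsilon_n$. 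Together with the upper bound this is~\eqref{Eq::B-E_normal_opt_for_rho_small}. The main obstacle overall is not any of these triangle-inequality manipulations — which are routine once Theorems~\ref{Rem:Main_thm_rewritten} and~\ref{Thm:epsilon_n} are in hand — but rather ensuring that the equality statements are genuinely \emph{exact} equalities: this hinges on the sharpness clause of Theorem~\ref{Rem:Main_thm_rewritten} (which supplies exact values for the penultimate distance) and on the elementary but essential observation that the cubic test function $f(x)=x^3/6$ detects precisely the third-cumulant discrepancy and nothing else, because all lower moments are matched after standardization and $f''$ has Lipschitz constant exactly $1$.
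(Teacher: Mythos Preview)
Your proposal is correct and follows essentially the same route as the paper: the triangle inequality combined with the i.i.d.\ case of Theorem~\ref{Rem:Main_thm_rewritten} gives~\eqref{Eq:Main_normal}; the exact computation $\widetilde{P_\rho^{\ast k}}f-\mathrm{N}f=B(\rho)/(6\sqrt k)$ via third-cumulant additivity (both $\mathrm{N}f$ and $\widetilde{\mathrm{B}_{k,\frac12}}f$ vanish by symmetry) together with~\eqref{Eq:Main_normal} and $\varepsilon_k=O(1/k)$ from Theorem~\ref{Thm:epsilon_n} gives the sandwich for~\eqref{Eq:B-E_normal_opt_for_n_large}; and the two-sided triangle inequality with $B(\rho(P))\to 0$ yields~\eqref{Eq::B-E_normal_opt_for_rho_small}. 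One small slip: when you write ``combining this with~\eqref{Eq::B-E_normal_opt_for_rho_small} below (which gives $\varepsilon_k\to0$)'' you are citing the wrong label---$\varepsilon_k\to0$ is the content of Theorem~\ref{Thm:epsilon_n}, not of~\eqref{Eq::B-E_normal_opt_for_rho_small}, and citing the latter here would be circular; but since you immediately use the correct bound $O(1/k)$, the argument itself is unaffected.
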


The proof of Theorem~\ref{Thm:Normal_appr} is given in section~\ref{Sec:zeta3-between-N-B} on p.\,\pageref{Page:proof-of-Thm:Normal_appr}.

%%%%%%%%%%%%%%%%%%%%%%%%%%%%%%%%%%%%%%%%%%%%%%%%%%%%%%%%%%%%%%%%%%%%%%%%%%%%%%%%%%%%%%%%%%%%%   Remark 1.12
\begin{Rem}                                         \label{Remark_1.12}
%%%%%%%%%%%%%%%%%%%%%%%%%%%%%%%%%%%%%%%%%%%%%%%%%%%%%%%%%%%%%%%%%%%%%%%%%%%%%%%%%%%%%%%%%%%%%%%%%%%%%%%%%%
In view of~\eqref{Eq:Main_normal} and $\epsilon_n=O(n^{-1})$, the first 
equation in~\eqref{Eq:B-E_normal_opt_for_n_large} yields, as an alternative formulation of the 
large $n$ optimality of~\eqref{Eq:Main_normal}:
\la                                                                 \label{Eq:B-E_normal_opt_for_n_large_(alternative)} 
  \frac{B(\rho)}{6} &=& \max_{P\in\cP_3\colon \rho(P)=\rho} \varliminf_{n\rightarrow\infty} 
    \sqrt{n}\, \zeta_3\left(\widetilde{P^{\ast n}},\mathrm{N} \right) \quad\text{ for }\rho\in[1,\infty[,
\al 
with the maximum attained for $P=P_\rho$. We suspect that in~\eqref{Eq:B-E_normal_opt_for_n_large_(alternative)}   
one can replace ``$\varliminf$'' by ``$\lim$'', since if $P\in\cP_3$ is given and if also $f\in\cF_3$ is fixed,
then we have 
\la                                                                        \label{Eq:Edgeworth_for_zeta}
  \lim_{n\rightarrow\infty}\sqrt{n}\left|\widetilde{P^{\ast n}}f - N f\right| &=& |E f| 
\al 
with $E$ denoting here the signed measure on $\R$ with the distribution function
$x\mapsto(1-x^2)\mathrm{e}^{-x^2/2}\mu_3(P)/(6\sqrt{2\pi})$ occurring in the short Edgeworth expansion
for $\widetilde{P^{\ast n}}$, by applying  \cite[Theorem (3.6) in the i.i.d.~case with $k=1$, $s=s_0=3$, 
$p=2$ for $|\alpha|=1$]{GoetzeHipp1978}. However, for an arbitrary $P\in\cP_3$,
we are not aware of a reference conveniently 
yielding the convergence in~\eqref{Eq:Edgeworth_for_zeta} uniformly in $f\in\cF_3$, 
which would then yield the existence of  
$\lim_{n\rightarrow\infty} \sqrt{n}\, \zeta_3\left(\widetilde{P^{\ast n}},\mathrm{N} \right)
= \sup_{f\in \cF_3}| E f|$. In the special case of $P=P_\rho$ this limit exists,
as claimed in \eqref{Eq:B-E_normal_opt_for_n_large},
by the proof of  Theorem~\ref{Thm:Normal_appr}. 
\end{Rem}

%%%%%%%%%%%%%%%%%%%%%%%%%%%%%%%%%%%%%%%%%%%%%%%%%%%%%%%%%%%%%%%%%%%%%%%%%%%%%%%%%% Remark 1.13
\begin{Rem} 
%%%%%%%%%%%%%%%%%%%%%%%%%%%%%%%%%%%%%%%%%%%%%%%%%%%%%%%%%%%%%%%%%%%%%%%%%%%%%%%%%%%%%%%%%%%%%%%%%%%%%%%%%%%%%%%
Inequality~\eqref{Eq:Main_normal} often improves Tyurin's 
estimate~\cite[Theorem~4]{Tyurin2009arxiv}, \cite{Tyurin2009DAN}, \cite[Theorem~4]{Tyurin2010TVP} (with~\cite{Tyurin2009arxiv} actually being the 
final one among the three papers)
\la                                                                            \label{Eq:Tyurin_zeta3(noniid)}  
  \zeta_3\bigg(\widetilde{\bigconv_{i=1}^n P_i},\,\mathrm{N} \bigg)
  &\le& \frac1{6\sigma^3}\sum_{i=1}^n\sigma_i^3\varrho_i \quad \text{for } P_1\ldots,P_n\in\cP_3
\al
in the i.i.d.~case, where the latter takes the form
\la \label{Eq:Tyurin_zeta3}  
     \zeta_3(\widetilde{P^{\ast n}},\mathrm{N}) &\le& \frac{\rho(P)}{6\sqrt{n}}\ \quad\text{ for } 
  P\in\cP_3\text{ and } n\in\N
\al
and is optimal in the sense that the constant factor $1/6$ cannot be made less if $\rho(P)$ is allowed to be arbitrarily large. Indeed, in view of $B(\rho)<\rho$ and $\varepsilon_n =O(n^{-1})$, inequality~\eqref{Eq:Main_normal}  improves~\eqref{Eq:Tyurin_zeta3}  for \textit{every} value of $\rho\ge1$ and every sufficiently large $n\in\N$, namely iff
$$
 6\sqrt n \varepsilon_n \,\ <\,\  \rho-B(\rho),
$$
which, by Theorem~\ref{Thm:epsilon_n},  is surely true for
\la                                                           \label{Eq:n>=0.65804/(rho-B(rho))^2}
 n&\ge& \Big(\frac{6\cdot0.1352}{\rho-B(\rho)}\Big)^2 \,\ = \,\  \frac{0.65804\ldots}{(\rho-B(\rho))^2}.
\al
\end{Rem}

Here is a table of the values of $\rho$ and $n$ satisfying condition~\eqref{Eq:n>=0.65804/(rho-B(rho))^2}, where, for convenience, we also provide values of $B(\rho)$ rounded up:
$$
\begin{array}{||c|c|c|c|c|c|c|c|c|c|c|c|c||}
\hline
\rho\le &1.01&1.10&1.18&1.24&1.30&1.52&1.66&1.77&1.94&2.17&2.33& 2.519
\\\hline
B(\rho)\le &0.17&0.53&0.72&0.83&0.94&1.27&1.45&1.59&1.80&2.06&2.24&2.438
\\\hline
n\ge &1&2&3&4&5&10&15&20&30&50&70&100
\\\hline
\end{array}
$$

\begin{Example}
Let $P$ be an exponential distribution. Then $\rho=12e^{-1}-2=2.4145\ldots,$ $B(\rho)=2.3248\ldots,$ and condition~\eqref{Eq:n>=0.65804/(rho-B(rho))^2} holds  for $n\ge82$.

If $P$ is a uniform distribution on an interval, then $\rho=3\sqrt{3}/4=1.2990\ldots,$ $B(\rho)=0.9302\ldots,$ and condition~\eqref{Eq:n>=0.65804/(rho-B(rho))^2} holds  for $n\ge5$.

If $P$ is the Bernoulli distribution with parameter $p\in]0,\frac12]$, then, denoting $q\coloneqq1-p$, we have $\rho(P)=(p^2+q^2)/\sqrt{pq},$ $B(\rho)=(q-p)/\sqrt{pq},$ $\rho-B(\rho)=2p\sqrt{p/q},$ and condition~\eqref{Eq:n>=0.65804/(rho-B(rho))^2} holds for:
\\\mbox{}\qquad\qquad
$n\ge1$ \ if \ $p\ge0.45$,\qquad
$n\ge2$ \ if \ $p\ge0.38$,\qquad
$n\ge3$ \ if \ $p\ge0.34$,\\\mbox{}\qquad\qquad
$n\ge4$ \ if \ $p\ge0.31$,\qquad
$n\ge17$ \ if \ $p\ge0.2$,\qquad
$n\ge149$ \ if \ $p\ge0.1$.\\
In particular, in the symmetric case ($p=1/2$) our bound~\eqref{Eq:Main_normal} is of course
sharper than~\eqref{Eq:Tyurin_zeta3}  for every $n\in\N$.

If $P$ is the Poisson distribution with parameter $\lambda>0$, then:
\\
if $\lambda = 1$ we have $\rho=1.7357\ldots,$ $B(\rho)=1.5448\ldots,$ and~\eqref{Eq:n>=0.65804/(rho-B(rho))^2} holds for  $n\ge19$;
\\
if $\lambda = 2$ we have $\rho=1.6640\ldots,$ $B(\rho)=1.4543\ldots,$ and~\eqref{Eq:n>=0.65804/(rho-B(rho))^2} holds for $n\ge15$;
\\ 
if $\lambda = 4$ we have $\rho=1.6294\ldots,$ $B(\rho)=1.4096\ldots,$ and~\eqref{Eq:n>=0.65804/(rho-B(rho))^2} holds for $n\ge14$;
\\ 
if $\lambda = 8$ we have $\rho=1.6125\ldots,$ $B(\rho)=1.3874\ldots,$ and~\eqref{Eq:n>=0.65804/(rho-B(rho))^2} holds for $n\ge13$.

If $P$ is the geometric distribution with $P_i(\{k\})=p(1-p)^k$ for $k=0,1,2,\ldots,$ then, with $p=0.1,$ we have $\rho=2.4158\ldots,$ $B(\rho)=2.3262\ldots,$ and~\eqref{Eq:n>=0.65804/(rho-B(rho))^2} holds for $n\ge83$.
\end{Example}

Now we present extensions of some of the above results to the non-i.i.d.~case.
%%%%%%%%%%%%%%%%%%%%%%%%%%%%%%%%%%%%%%%%%%%%%%%%%%%%%%%%%%%%%%%%%%%%%%%%%%%%%%%%%%% Theorem 1.15
\begin{Thm}                                                           \label{Thm:Main(noniid)}
%%%%%%%%%%%%%%%%%%%%%%%%%%%%%%%%%%%%%%%%%%%%%%%%%%%%%%%%%%%%%%%%%%%%%%%%%%%%%%%%%%%%%%%%%%%%%%%%%%%%%%%%%%%%
For $P_i,Q_i,\rho_i,\sigma_i,\sigma$ as in Theorem~\ref{Thm:Main} we have
\la                                          \label{Eq:Main_normal(noniid)}
  \zeta_3\bigg( \widetilde{\bigconv_{i=1}^n P_i},\,\mathrm{N} \bigg)
  &\le & \frac{1}{6\sigma^3}\sum\limits_{i=1}^n \sigma_i^3B(\varrho_i)+
\zeta_3\bigg( \widetilde{\bigconv_{i=1}^nQ_i},\,\mathrm{N} \bigg).
\al
Further, if $\sigma_1\ge\sigma_2\ge\ldots\ge\sigma_n$,  then
\[
\zeta_3\bigg( \widetilde{\bigconv_{i=1}^nQ_i},\,\mathrm{N} \bigg)
 &\le& \frac16\bigg(2\sqrt{\frac2\pi}-1\bigg)\frac{\sigma_1^3}{\sigma^3}+ \frac{1}{6\sqrt{2\pi}} \sum_{k=1}^{n-1}\frac{\sigma_{k+1}^3\min\{1,\sqrt n
\sigma_{k+1}/\sigma\}}{\sigma^3\sqrt{k}} \\
& \le & 0.0993\cdot\frac{\sigma_1^3}{\sigma^3}+ 0.0665 \sum_{k=1}^{n-1}\frac{\sigma_{k+1}^3}{\sigma^3\sqrt{k}}.
\]
\end{Thm}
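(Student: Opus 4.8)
The first inequality \eqref{Eq:Main_normal(noniid)} is immediate: by the triangle inequality for the extended metric $\zeta_3$ (Theorem~\ref{Thm:First_facts_on_zeta_distances}(b)), applied on the set $\widetilde{\cP_3}$ where $\zeta_3$ is a genuine metric by Theorem~\ref{Thm:First_facts_on_zeta_distances}(c), one has
\[
 \zeta_3\bigg(\widetilde{\bigconv_{i=1}^n P_i},\,\mathrm N\bigg)
 \ \le\ \zeta_3\bigg(\widetilde{\bigconv_{i=1}^n P_i},\,\widetilde{\bigconv_{i=1}^n Q_i}\bigg)
 + \zeta_3\bigg(\widetilde{\bigconv_{i=1}^n Q_i},\,\mathrm N\bigg),
\]
and the first term is bounded by $\frac{1}{6\sigma^3}\sum_i\sigma_i^3 B(\varrho_i)$ by Theorem~\ref{Rem:Main_thm_rewritten}. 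So the real content is the second displayed chain, bounding $\zeta_3(\widetilde{\bigconv Q_i},\mathrm N)$ when $\sigma_1\ge\cdots\ge\sigma_n$.

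For this I would again use the triangle inequality, but now to peel off the summands one at a time, comparing $\widetilde{\bigconv_{i=1}^{k} Q_i}$-type laws for successive $k$, and inserting a normal factor of the matching variance at each step. Concretely, write $\sigma_{(k)}^2=\sum_{i=1}^k\sigma_i^2$ and consider the hybrid laws obtained by convolving the first few $Q_i$ with a centred normal of the complementary variance; the first such law (all $Q_i$ replaced by normals) is exactly $\mathrm N$ after standardization, and the telescoping differences are controlled by two ingredients. First, the scaling property $\zeta_3(\widetilde{cP},\widetilde{cQ})$ behaves homogeneously (more precisely, convolving both arguments with a common law and rescaling does not increase $\zeta_3$ by the standard ideal-metric estimates recalled in Section~\ref{Sec:zeta-metrics}), so each step reduces to estimating $\zeta_3$ between a single $Q_k=\frac12(\delta_{-\sigma_k}+\delta_{\sigma_k})$ convolved with a large normal and the full normal of the same total variance. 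Second, $\zeta_3$ of order $3$ is \emph{smoothed} by the normal factor of variance $\sigma_{(k-1)}^2-\sigma_k^2$ or so: convolving with $\mathrm N_{\tau}$ and using that the relevant test functions then have third derivative controlled by $\tau^{-1/2}$ times a universal constant gives a bound of order $\sigma_k^3/\sqrt{k-1}$ (the $\sqrt k$ in the denominator), while the unsmoothed bound $\zeta_3(\widetilde{Q_k*\text{(normal)}},\mathrm N)\le \tfrac1{6\sqrt{2\pi}}$ coming from $\zeta_3(\widetilde{\mathrm B_{1,1/2}},\mathrm N)$-type estimates (cf.\ Example~\ref{Example:zeta_3(Q,N)_zeta_4(Q,N)} and Theorem~\ref{Thm:epsilon_n} with $n=1$, giving the constant $\frac16(2\sqrt{2/\pi}-1)$ for the very first term) supplies the competing bound; taking the minimum of the two, and using $\sigma_{k+1}/\sigma\le 1/\sqrt{n-k}$-type monotonicity from $\sigma_1\ge\cdots\ge\sigma_n$ to get the $\min\{1,\sqrt n\,\sigma_{k+1}/\sigma\}$ factor, yields the first bound.

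The final numerical line is then pure arithmetic: $\frac16(2\sqrt{2/\pi}-1)=0.0993\ldots$ and $\frac1{6\sqrt{2\pi}}=0.06649\ldots<0.0665$, and one simply drops the $\min\{1,\cdot\}$ factor by bounding it by $1$.

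\textbf{Main obstacle.} The delicate point is the smoothing estimate: one must show that $\zeta_3$ between the standardized convolution $\widetilde{Q_k*\mathrm N_{\tau}}$ and $\mathrm N$, where $\tau^2\asymp \sigma_{(k-1)}^2$, is at most a constant times $\sigma_k^3/(6\sqrt{2\pi}\,\tau)$ after standardization, i.e.\ that the third cumulant (which vanishes for $Q_k$!) being zero combined with the normal smoothing improves the naive $\zeta_3$-bound of order $\sigma_k^3$ to order $\sigma_k^3/\sqrt{k}$ — this is essentially a one-dimensional Edgeworth-type estimate for $\zeta$-metrics and is presumably where $\zeta_4$ enters, exactly as it does in the proof of Theorem~\ref{Thm:epsilon_n}, via $\zeta_3(P*\mathrm N_\tau,Q*\mathrm N_\tau)\le c\,\tau^{-1}\zeta_4(P,Q)$ and the finiteness/boundedness facts of Theorem~\ref{Thm:First_facts_on_zeta_distances}. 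Getting the constant $\tfrac1{6\sqrt{2\pi}}$ \emph{sharp} (rather than just some universal $c$) will require care with the fourth-cumulant term of $Q_k$, namely $\kappa_4(Q_k)=-2\sigma_k^4$, and the precise value of $\zeta_4(\widetilde{Q},\mathrm N)$ or the analogous normal-smoothed quantity.
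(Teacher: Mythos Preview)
Your approach is essentially the paper's: triangle inequality plus Theorem~\ref{Rem:Main_thm_rewritten} for~\eqref{Eq:Main_normal(noniid)}, then a Lindeberg-type telescoping $T_k=Z_1+\cdots+Z_k+Y_{k+1}+\cdots+Y_n$ with $Y_i\sim Q_i$, $Z_i\sim\mathrm N_{\sigma_i}$, bounding $\zeta_3(T_0,T_1)\le\sigma_1^3\zeta_3(Y,Z)$ by regularity and homogeneity, and the remaining increments via Lemma~\ref{LemZeta_s<=Zeta_s+l} with $s=3$, $t=1$, giving $\zeta_3(T_k,T_{k+1})\le \sqrt{2/\pi}\,\sigma_{k+1}^4\zeta_4(Y,Z)/\sqrt{\sum_{i\le k}\sigma_i^2}$; the sharp constants are exactly $\zeta_3(Y,Z)=\tfrac16(2\sqrt{2/\pi}-1)$ and $\zeta_4(Y,Z)=\tfrac1{12}$ from Example~\ref{Example:zeta_3(Q,N)_zeta_4(Q,N)}, and $C_{3,1}=2/\sqrt{2\pi}$ from Lemma~\ref{LemZeta_s<=Zeta_s+l}, so $\sqrt{2/\pi}\cdot\tfrac1{12}=\tfrac1{6\sqrt{2\pi}}$.

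One small correction to your picture: the factor $\min\{1,\sqrt n\,\sigma_{k+1}/\sigma\}$ does \emph{not} arise from taking the minimum of a smoothed and an unsmoothed bound. Both alternatives come from the \emph{same} smoothed estimate, via two different lower bounds on the accumulated normal variance: under the ordering $\sigma_1\ge\cdots\ge\sigma_n$ and the normalization $\sigma=1$ one has $\sum_{i\le k}\sigma_i^2\ge k\sigma_{k+1}^2$ and $\sum_{i\le k}\sigma_i^2\ge k/n$, and plugging each into $\sigma_{k+1}^4/\sqrt{\sum_{i\le k}\sigma_i^2}$ produces the two competing terms $\sigma_{k+1}^3/\sqrt k$ and $\sqrt n\,\sigma_{k+1}^4/\sqrt k$. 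The unsmoothed $\zeta_3$ bound is used only once, for the very first swap where no normal factor is yet available.
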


The proof of Theorem~\ref{Thm:Main(noniid)} is given in section~\ref{Sec:zeta3-between-N-B} on p.\,\pageref{Sec:zeta3-between-N-B}.

\begin{Rem}
Inequality~\eqref{Eq:Main_normal(noniid)} improves Tyurin's already optimal bound~\eqref{Eq:Tyurin_zeta3(noniid)}  iff
\[
\zeta_3\bigg( \widetilde{\bigconv_{i=1}^n Q_i},\,\mathrm{N} \bigg)  
 &<& \frac1{6\sigma^3}\sum_{i=1}^n\sigma_i^3\left(\varrho_i -B(\rho_i)\right).
\]
\end{Rem}

Thus, as already indicated at the end of subsection~1.1, Theorems~\ref{Thm:Main} and~\ref{Thm:Main(noniid)} can be regarded as extensions of the results previously obtained in~\cite{Shevtsova2012DAN1,Shevtsova2012DAN2}, \cite[Corollary~4.7 on p.\,284, Theorem~4.13 on p.\,298, Theorem~4.14 on p.\,300, Corollary~4.17 on p.\,302]{Shevtsova2012Debrecen}, \cite[Theorems~2.3, 2.4]{Shevtsova2012TVP} for the uniform metric to $\zeta_3$-metric, so that the inequalities~\eqref{Eq:Main_normal} and~\eqref{Eq:Main_normal(noniid)} can be called \textit{estimates with an asymptotically optimal structure}.

%%%%%%%%%%%%%%%%%%%%%%%%%%%%%%%%%%%%%%%%%%%%%%%%%%%%%%%%%%%%%%%%%%%%%%%%%%%%%%%%%%%%%%%%%%%%%%
%%%%%%%%%%%%%%%%%%%%%%%%%%%%%%%%%%%%%%%%%%%%%%%%%%%%%%%%%%%%%%%%%%%%%%%%%%%%%%%%%%%%%%%%%%%%%%%
\section{Auxiliary analytic results}
%%%%%%%%%%%%%%%%%%%%%%%%%%%%%%%%%%%%%%%%%%%%%%%%%%%%%%%%%%%%%%%%%%%%%%%%%%%%%%%%%%%%%%%%%%%%%%
%%%%%%%%%%%%%%%%%%%%%%%%%%%%%%%%%%%%%%%%%%%%%%%%%%%%%%%%%%%%%%%%%%%%%%%%%%%%%%%%%%%%%%%%%%%%%%%

%%%%%%%%%%%%%%%%%%%%%%%%%%%%%%%%%%%%%%%%%%%%%%%%%%%%%%%%%%%%%%%%%%%%%%%%%%%%%%%%%%%%%%%%%%%%%%%%%%%%% subsection 2.1
\subsection{Two-point Hermite interpolation, and approximation in $\cF_s$}
%%%%%%%%%%%%%%%%%%%%%%%%%%%%%%%%%%%%%%%%%%%%%%%%%%%%%%%%%%%%%%%%%%%%%%%%%%%%%%%%%%%%%%%%%%%%%%%%%%%% 
The purpose of Lemma~\ref{Lem:Two-point_Hermite_interpolation_polynomials} is to prepare 
through its parts~(c) and~(d) for a proof of Lemma~\ref{Lem:cF_s_properties}, which in turn is used 
in  section~\ref{Sec:zeta-metrics} below  in  our proof of Theorem~\ref{Thm:First_facts_on_zeta_distances}.

%%%%%%%%%%%%%%%%%%%%%%%%%%%%%%%%%%%%%%%%%%%%%%%%%%%%%%%%%%%%%%%%%%%%%%%%%%%%%%%%%%%%%%%%%%%%%%% Lemma 2.1 
\begin{Lem}[On two-point Hermite interpolation polynomials]    \label{Lem:Two-point_Hermite_interpolation_polynomials}
%%%%%%%%%%%%%%%%%%%%%%%%%%%%%%%%%%%%%%%%%%%%%%%%%%%%%%%%%%%%%%%%%%%%%%%%%%%%%%%%%%%%%%%%%%%%%%%%%%%%%%%%%%%%%
Let $m_0,m_1\in\N_0$, $V_i \coloneqq\R^{\{0,\ldots,m_i\}}$ for $i\in \{0,1\}$, and $V\coloneqq V_0\times V_1$.
For distinct $x_0,x_1\in\R$ and for $y=(y_0,y_1) = \big((y_{0,j})_{j=0}^{m^{}_0},(y_{1,j})_{j=0}^{m^{}_1}\big)\in V$, 
let $p=p^{}_{x_0,x_1,y} =  p^{}_{x_0,x_1,y_0,y_1}$ denote the Hermite interpolation polynomial defined by being a polynomial 
of degree at most  $m_0+m_1+1$ and satisfying  the condition
\la
 p^{(j)}(x_i) &=& y_{i,j} \quad\text{ for }  i\in \{0,1\}\text{ and }j\in\{0,\ldots,m_i\}.
\al

\smallskip{\rm\textbf{(a) Linearity.}} Given distinct $x_0,x_1\in\R$, the map $V \ni y \mapsto p^{}_{x_0,x_1,y}$
is linear with respect to the obvious vector space structures; in particular we have 
$p^{}_{x_0,x_1,y_0,y_1} = p^{}_{x_0,x_1,y_0,0} + p^{}_{x_0,x_1,0,y_1}  = p^{}_{x_0,x_1,y_0,0} + p^{}_{x_1,x_0,y_1,0}$ 
for $y_0\in V_0$ and $y_1 \in V_{1}$.

\smallskip{\rm\textbf{(b) Change of variables.}} For $y \in V$ and distinct  $x_0,x_1\in\R$, we have 
\[ 
 p^{}_{x_0,x_1,y}(x) &=& p^{}_{0,1,z}\big(\frac{x-x_0}{x_1-x_0}  \big) \quad\text{ for }x\in\R 
\]  
with $z\in V$ defined by $z_{i,j}\coloneqq (x_1-x_0)^jy_{i,j}$ for $i\in \{0,1\}$  and $j\in\{0,\ldots,m_i\}$.

\smallskip{\rm\textbf{(c) Positivity.}} Let $-\infty < x_0<x_1<\infty$ and let $(y_0,y_1)\in V$ satisfy 
\la
  y_{0,j} \ \ge \ 0 \text{ for }j\in\{0,\ldots,m_0\},&& (-1)^jy_{1,j} \ \ge \ 0 \text{ for }j\in\{0,\ldots,m_1\}.
\al
Then either $p>0$ on $]x_0,x_1[$, or $y_0=0$, $y_1=0$, $p=0$.

\smallskip{\rm\textbf{(d) Bounds.}} Let $\|\cdot\|$ be a norm on the vector space $V$. 
Then there exists a constant $c = c^{}_{\|\cdot\|} \in \mathopen]0,\infty\mathclose[$ such that the following holds: 
If $y\in V$ and  if $-\infty <x_0<x_1<\infty$, then  
\la                                                      \label{Eq:Bounds_for_interpolation}
 \sup_{x\in [x_0,x_1]} \left| p^{(k)}_{x_0,x_1,y }(x)\right| 
  &\le& c\,\|y\|\,\frac{1\vee |x_1-x_0|^{m_0\vee m_1}}{|x_1-x_0|^k} \quad\text{ for }k\in\N_0.
\al
\end{Lem}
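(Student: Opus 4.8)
The plan is to prove the four parts in order, using (a) and (b) as tools for the rest. For (a) I would invoke the standard uniqueness of the Hermite interpolant: a nonzero polynomial of degree $\le m_0+m_1+1$ cannot vanish to orders $m_0+1$ at $x_0$ and $m_1+1$ at $x_1$, as this would force $\ge m_0+m_1+2$ zeros counted with multiplicity. Linearity of $y\mapsto p^{}_{x_0,x_1,y}$ is then immediate, since if $p$ and $q$ interpolate $y$ and $y'$ then $\alpha p+\beta q$ interpolates $\alpha y+\beta y'$ and still has degree $\le m_0+m_1+1$; the displayed splitting follows from $y=(y_0,0)+(0,y_1)$, and the last equality just records that $p^{}_{x_0,x_1,0,y_1}$ and $p^{}_{x_1,x_0,y_1,0}$ solve the same interpolation problem. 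For (b) I would put $\phi(x):=(x-x_0)/(x_1-x_0)$, observe that $q:=p^{}_{x_0,x_1,y}\circ\phi^{-1}$ has degree $\le m_0+m_1+1$, and compute by the chain rule $q^{(j)}(0)=(x_1-x_0)^j y_{0,j}$ and $q^{(j)}(1)=(x_1-x_0)^j y_{1,j}$, so that $q=p^{}_{0,1,z}$ with $z$ as stated, which is the claim after substituting $x\mapsto\phi(x)$.

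For (c), note first that since $x_1-x_0>0$ the substitution in (b) maps $]x_0,x_1[$ increasingly onto $]0,1[$ and preserves the sign pattern of the data (the scaling factors $(x_1-x_0)^j$ are positive), so it suffices to treat $x_0=0$, $x_1=1$. Here I would exhibit the fundamental Hermite polynomials explicitly. The interpolant $H_{0,k}$ with data $\delta_{jk}$ at $0$ and zero at $1$ must, by its forced zeros, have the form $x^k(1-x)^{m_1+1}\tilde s(x)$ with $\deg\tilde s\le m_0-k$; the remaining conditions $H_{0,k}^{(j)}(0)=\delta_{jk}$ for $k\le j\le m_0$ then identify $\tilde s$ as $1/k!$ times the degree-$(m_0-k)$ Taylor polynomial at $0$ of $(1-x)^{-(m_1+1)}=\sum_{i\ge0}\binom{m_1+i}{i}x^i$, so $H_{0,k}(x)=\frac{x^k}{k!}(1-x)^{m_1+1}\sum_{i=0}^{m_0-k}\binom{m_1+i}{i}x^i>0$ on $]0,1[$. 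The same argument at the node $1$ gives $H_{1,k}(x)=\frac{(x-1)^k}{k!}\,x^{m_0+1}\sum_{i=0}^{m_1-k}\binom{m_0+i}{i}(1-x)^i$, whence $(-1)^k H_{1,k}>0$ on $]0,1[$. By (a), $p=\sum_{k=0}^{m_0}y_{0,k}H_{0,k}+\sum_{k=0}^{m_1}y_{1,k}H_{1,k}$; on $]0,1[$ each term $y_{0,k}H_{0,k}$ is $\ge 0$ and each $y_{1,k}H_{1,k}=\big((-1)^k y_{1,k}\big)\big((-1)^k H_{1,k}\big)$ is $\ge 0$, and as soon as some $y_{i,k}\ne 0$ the corresponding term is strictly positive throughout $]0,1[$ while the rest are nonnegative; if $y=0$ then $p=0$.

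For (d), again reduce to $[0,1]$ by (b): with $t:=x_1-x_0>0$ and $z_{i,j}:=t^j y_{i,j}$ the chain rule gives $p^{(k)}_{x_0,x_1,y}(x)=t^{-k}p^{(k)}_{0,1,z}(\phi(x))$, hence $\sup_{[x_0,x_1]}\big|p^{(k)}_{x_0,x_1,y}\big|=t^{-k}\sup_{[0,1]}\big|p^{(k)}_{0,1,z}\big|$. On the reference interval the map $z\mapsto\sup_{[0,1]}\big|p^{(k)}_{0,1,z}(\cdot)\big|$ is a seminorm on the finite-dimensional space $V$, hence $\le c_k\|z\|$ for some $c_k$; since $\deg p^{}_{0,1,z}\le m_0+m_1+1$ one may take $c_k=0$ for $k>m_0+m_1+1$ and set $c_0:=\max_k c_k<\infty$, which works uniformly in $k$. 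Finally $|z_{i,j}|=t^j|y_{i,j}|\le(1\vee t^{m_0\vee m_1})|y_{i,j}|$ because $0\le j\le m_0\vee m_1$, so equivalence of norms on $V$ yields $\|z\|\le C(1\vee t^{m_0\vee m_1})\|y\|$; combining gives \eqref{Eq:Bounds_for_interpolation} with $c:=c_0C$, depending only on $\|\cdot\|$ (the $m_i$ being fixed).

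I expect (c) to be the only genuine obstacle: parts (a), (b) and (d) are essentially bookkeeping once one decides to push everything onto the reference interval $[0,1]$, whereas (c) needs the fundamental Hermite polynomials written in a form that makes the signs transparent, which in particular means getting the alternating sign in the normalization at the node $x_1$ exactly right.
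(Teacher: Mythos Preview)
Your proof is correct. Parts (a), (b), and (d) follow essentially the same route as the paper: existence/uniqueness of the Hermite interpolant gives linearity, the affine change of variables to $[0,1]$ is verified by the chain rule, and the bound in (d) comes from pushing everything to $[0,1]$ and invoking boundedness of linear maps on the finite-dimensional space $V$.

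Part (c) is where you and the paper genuinely diverge. The paper reduces via (a) and (b) to the case $x_0=0$, $x_1=1$, $y_1=0$, and then argues by contradiction: assuming $p(\xi)=0$ for some $\xi\in\,]0,1[$, it iteratively applies Rolle's theorem to $p,p',\ldots,p^{(k)}$ (where $k$ is the largest index with $y_{0,k}>0$), using at each step that $p^{(j)}>0$ near $0$ to manufacture an extra zero of the next derivative, until $p^{(k+1)}$ is forced to have more zeros than its degree allows, contradicting $p(\xi)=0$. Your approach is instead constructive: you write down the fundamental Hermite polynomials $H_{0,k}$ and $H_{1,k}$ explicitly via the Taylor expansion of $(1-x)^{-(m_1+1)}$, read off that $H_{0,k}>0$ and $(-1)^kH_{1,k}>0$ on $]0,1[$ because every factor and coefficient is positive there, and then sum. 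Your argument is more direct and yields the explicit basis as a by-product; the paper's Rolle-type argument avoids computing anything explicit and would adapt more readily if the basis functions were less tractable, but here both work equally well.
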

\begin{proof} The existence and uniqueness of $p$ are well-known, and easily imply~(a) and~(b). 

(c) By (a) and (b), the latter applied to $p^{}_{x_0,x_1,y_0,0}$ and also to  $p^{}_{x_1,x_0,y_1,0}$,
we may assume that we have $x_0=0$, $x_1=1$, $y_1=0$. Then the case of $y_0=0$ is trivial, 
and so we assume from now on that at least one coordinate of $y_0$ is even strictly positive, 
and we put 
\[
   k &\coloneqq& \max\{ j\in\{0,\ldots,m_0\} : y_{0,j} >0 \}.
\]
We then have 
\la                                                          \label{Eq:p^{(j)}>0}
    p^{(j)}(x) &>& 0 \quad\text{ for $x>0$ sufficiently close to $0$}
\al 
for $j\in\{0,\ldots,k\}$. 

Assume from now on, to get a contradiction, that we do not have $p>0$ on $]0,1[$. Then, by~\eqref{Eq:p^{(j)}>0}
with $j=0$ and by the intermediate value theorem, 
we have $p(\xi)=0$ for some $\xi\in\mathopen]0,1\mathclose[$. Hence, understanding ``$n$ zeros'' 
to mean ``at least $n$ zeros, counting multiplicity'' in this proof,  $p=p^{(0)}$ has $1+ (m_1+1)=m_1+2$
zeros in $]0,1]$, namely one zero at $\xi$ and $m_1+1$ zeros at $1$.

If now $k\ge 1$ and if $j\in\{0,\ldots,k-1\}$ is such that $p^{(j)}$ has $m_1+2$ zeros in $]0,1]$,
then there is an $\eta=\eta_j\in \mathopen]0,1\mathclose]$ with $p^{(j)}(\eta)=0$ and such that 
$p^{(j)}$ has $m_1+2$ zeros in $[\eta,1]$, and 
then~\eqref{Eq:p^{(j)}>0} with $j+1$ in place of $j$ together with $p^{(j)}(0)\ge0$ implies that 
the maximum of $p^{(j)}$ over $[0,\eta]$ is attained at a point in $\mathopen]0,\eta\mathclose[$,
and hence, in addition applying Rolle's theorem on $[\eta,1]$, we conclude that    $p^{(j+1)}$ has 
$1+(m_1+2-1)=m_1+2$ zeros in $]0,1]$.

The preceding two paragraphs yield that $p^{(k)}$ has $m_1+2$ zeros in $]0,1]$, and 
we have $p^{(k+1)}(0)=\ldots=p^{(m_0)}(0)=0$, with the latter condition of course being empty if $k=m_0$.
Hence $p^{(k+1)}$ has $(m_0-k)+(m_1+2-1)= m_0+m_1 +1-k$ zeros in [0,1]
and is of degree at most $ m_0+m_1 +1-(k+1)=  m_0+m_1 -k$, 
so we have  $p^{(k+1)}=0$ and hence $p$ of degree at most $k\le m_0$, 
yielding   $p(\xi) = \sum_{j=0}^{m_0}y_{0,j}\xi^j/j! >0$, a contradiction.

(d) If $k\ge m_0+m_1+2$, then $p^{(k)} =0$, and then~\eqref{Eq:Bounds_for_interpolation} is trivially 
true even with $c=0$; hence we may assume that $k\in\{0,\ldots,m_0+m_1+1\}$ is fixed in this proof.
Using finite-dimensionality of $V$, we may further assume  that $\|\cdot\| =\|\cdot\|_\infty$, that is,
$\|y\|  = \max_{i,j} |y_{i,j}|$ for $y\in V$, see e.g.~ \cite[pp.~192, 175]{Schwartz1991}. 
Given now $y$ and $x_0,x_1$ as in the claim, we apply~(b) with $z$ as defined there to  get 
\[
  \sup_{x\in [x_0,x_1]}\left| p^{(k)}_{x_0,x_1,y}(x)\right| 
    &=& \sup_{x\in [x_0,x_1]}\left|\tfrac{1}{(x_1-x_0)^k} p^{(k)}_{0,1,z}\big(\tfrac{x-x_0}{x_1-x_0}\big)  \right|
  \,\ \le \,\ \tfrac{c}{(x_1-x_0)^k}\|z\|^{}_\infty  \,\ \le \,\ \text{R.H.S.\eqref{Eq:Bounds_for_interpolation}}, 
\]
where $c$ denotes the norm of the linear map  $V \ni z \mapsto p^{(k)}_{0,1,z}|_{[0,1]} \in \cC([0,1],\R)$, 
with respect to the supremum norms on the two vector spaces, and $c<\infty$ by finite-dimensionality of $V$ 
again, see e.g.~ \cite[p.~279]{Schwartz1991}. 
\end{proof}

We recall the definitions of $\cF_s^\infty$ and $\cF_s$ from Definition~\ref{Def:zeta-distances}.

%%%%%%%%%%%%%%%%%%%%%%%%%%%%%%%%%%%%%%%%%%%%%%%%%%%%%%%%%%%%%%%%%%%%%%%%%% Lemma 2.2
\begin{Lem}[Denseness of $\cF_s^\infty$ in $\cF_s$]            \label{Lem:cF_s_properties}
%%%%%%%%%%%%%%%%%%%%%%%%%%%%%%%%%%%%%%%%%%%%%%%%%%%%%%%%%%%%%%%%%%%%%%%%%%%%%%%%%%%%
Let $s\in\mathopen]0,\infty\mathclose[$ and $f\in \cF_s$. 
Then there exist a sequence $(f_n)$ in $\cF^\infty_s$ and constants $a,b\in[0,\infty[$
with $f_n\rightarrow f$ pointwise and $|f_n| \le a + b|\cdot|^s$ for $n\in\N$. 
If $f=c|\cdot|^s$ with $c\ge 0$, then $(f_n)$ can be chosen to satisfy also $f_n\ge 0$
for $n\in\N$. 
\end{Lem}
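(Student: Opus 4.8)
The plan is to build the approximating sequence $(f_n)$ by truncating $f$ outside a large symmetric interval $[-n,n]$ and replacing it there by a carefully chosen continuation that keeps the relevant H\"older constant controlled while producing a bounded function. First I would reduce to the case $f(0)=f'(0)=\ldots=f^{(m)}(0)=0$ by subtracting the Taylor polynomial of $f$ at $0$ of degree $m$: this polynomial lies in the span of $1,x,\ldots,x^m$, so it is already in $\cF_s^\infty$ only after we re-truncate, but more importantly subtracting it does not change $\|f^{(m)}\|^{}_{\mathrm{L},\alpha}$, and it is harmless for the final conclusion since we can add it back (suitably truncated, or just noting polynomials of degree $\le m$ are uniformly approximable on compacta by bounded $\cF_s$-functions with the same derivative bound after a smooth cutoff far away). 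Actually the cleanest route: work directly with $f$, and on each interval $[-n,n]$ leave $f$ unchanged, while on $[n,\infty[$ replace $f$ by the unique polynomial $p_n$ of degree $\le m$ agreeing with $f$ to order $m$ at the point $n$ (a one-sided Hermite / Taylor matching), and symmetrically on $]-\infty,-n]$ with matching at $-n$. Then $f_n\in\cC^{m}(\R,\R)$, it agrees with $f$ on $[-n,n]$ hence $f_n\to f$ pointwise, and outside it is a polynomial of degree $\le m$, which is \emph{not} bounded — so this naive version fails the boundedness requirement.

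To fix boundedness, instead of extending by the Taylor polynomial I would extend by a two-point Hermite interpolation against a second node: on $[n,2n]$ put $f_n$ equal to the Hermite interpolation polynomial $p^{}_{n,2n,y}$ from Lemma~\ref{Lem:Two-point_Hermite_interpolation_polynomials}, where at the left node $n$ the data $y_{0,j}=f^{(j)}(n)$ for $j\in\{0,\ldots,m\}$ match $f$, and at the right node $2n$ we prescribe $y_{1,j}=0$ for $j\in\{0,\ldots,m-1\}$ and choose the top data $y_{1,m}$; then continue by a constant (if $m=0$, a constant; if $m\ge 1$, by the degree-$\le m-1$ truncation) for $x\ge 2n$, and symmetrically for $x\le -n$. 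The key point is the bound in Lemma~\ref{Lem:Two-point_Hermite_interpolation_polynomials}(d): with $x_1-x_0=n$ fixed-ish and the data $\|y\|$ controlled by $\sup_{|x|\le 2n}\sum_{j\le m}|f^{(j)}(x)|$, one gets $\sup_{[n,2n]}|f_n^{(m)}(x)-f_n^{(m)}(y)|/|x-y|^\alpha$ under control; but one must check this stays $\le 1$, which it will not in general. So the better and standard device is simpler: take $f_n := f\cdot\chi_n + (\text{correction})$ is messy; the genuinely robust construction is to \emph{not} modify $f^{(m)}$ at all on a huge set and accept a small loss.

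The actual clean approach I would carry out: fix a $\cC^\infty$ cutoff-type idea is disallowed since we cannot increase $s$-smoothness beyond $m$ --- instead, for each $n$ let $g_n$ be the function which on $[-n,n]$ equals $f$, and which on $[n,\infty[$ is the \emph{Hermite interpolation against the two nodes} $n$ and $n+1$ with all data at $n$ equal to $f^{(j)}(n)$ and all data at $n+1$ equal to $0$; then $g_n\in\cC^m$, is eventually $0$ on $[n+1,\infty[$ hence bounded, and $g_n\to f$ pointwise. By Lemma~\ref{Lem:Two-point_Hermite_interpolation_polynomials}(d) with $|x_1-x_0|=1$, the $m$-th derivative of $g_n$ on $[n,n+1]$ has modulus of continuity controlled by $c\max_{j\le m}|f^{(j)}(n)|$, \emph{but} since $f\in\cF_s$ we have $\|f^{(m)}\|^{}_{\mathrm{L},\alpha}\le 1$, so $|f^{(m)}(n)|$ can grow like $|n|^\alpha$ and the lower derivatives like $|n|^{m+\alpha}=|n|^s$; thus $\|g_n^{(m)}\|^{}_{\mathrm{L},\alpha}$ may be of order $|n|^{s}$, ruining membership in $\cF_s$. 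Therefore I would rescale the \emph{width} of the gluing interval with $n$: glue over $[n, n+\delta_n]$ with $\delta_n\to\infty$ chosen so that, by part~(d), $c\,\|y_n\|\,(1\vee\delta_n^{m})/\delta_n^{m}\le$ something $\to\|f^{(m)}\|^{}_{\mathrm{L},\alpha}$-compatible; more carefully, subtracting the degree-$m$ Taylor polynomial of $f$ at $n$ first so that $\|y_n\|$ only involves $|f^{(m)}(n)|\le|n|^\alpha+O(1)$ (no $|n|^s$ term!) and then part~(d) gives $\|g_n^{(m)}\|^{}_{\mathrm{L},\alpha}\le c|n|^\alpha/\delta_n^{\,?}$, which can be forced $\le\varepsilon_n\to0$ by taking $\delta_n$ large; finally one adds back a globally bounded surrogate of the Taylor polynomial supported near $[-n,n]$. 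The main obstacle is precisely this bookkeeping: arranging the gluing so that the H\"older constant of the $m$-th derivative of $f_n$ does not exceed $1$ (not merely stays bounded), which forces the width $\delta_n$ of the transition zone to grow with $n$ and forces the preliminary subtraction of the Taylor polynomial so that only $|f^{(m)}(n)|=O(|n|^\alpha)$ enters the estimate in Lemma~\ref{Lem:Two-point_Hermite_interpolation_polynomials}(d); the growth bound $|f_n|\le a+b|\cdot|^s$ then follows from $|f(x)|\le a+b|x|^s$ for $f\in\cF_s$ (by integrating the H\"older bound $m+1$ times from $0$) together with the explicit polynomial form of $f_n$ on the transition and tail zones. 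For the last sentence of the statement, when $f=c|\cdot|^s$ with $c\ge 0$ one additionally chooses the prescribed tail data to be $0$ and uses positivity of $f$ together with Lemma~\ref{Lem:Two-point_Hermite_interpolation_polynomials}(c) --- which says the Hermite interpolant with sign-alternating one-sided data (here: nonnegative value data at the inner node, zero data at the outer node) is $\ge 0$ on the transition interval --- so that the whole construction stays $\ge0$; I would handle the corner at $x=0$ separately since $|\cdot|^s$ is only in $\cC^{m,\alpha}$ there, but the inner interval $[-n,n]$ contains $0$ and $f$ is left untouched there, so no issue arises.
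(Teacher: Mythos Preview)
Your proposal wanders through several constructions and correctly identifies the essential difficulty---keeping $\|f_n^{(m)}\|^{}_{\mathrm{L},\alpha}\le 1$ rather than merely bounded---but never actually resolves it. The paper's proof uses exactly the Hermite-interpolation-over-a-wide-interval idea you converge on, but with one simple device you are missing: on the inner interval $\mathopen]-n,n\mathclose[$ the paper sets $f_n\coloneqq \frac{n-1}{n}f$ rather than $f$. This creates slack $\frac{1}{n}$ in the H\"older budget; the transition polynomials $p_n$ on $[n,b_n]$ and $q_n$ on $[a_n,-n]$ are then arranged, via Lemma~\ref{Lem:Two-point_Hermite_interpolation_polynomials}(d) with $b_n,|a_n|$ chosen large, so that each contributes at most $\frac{1}{2n}|x-y|^\alpha$ to any increment. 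The three contributions combine as a \emph{convex combination}, $\frac{1}{2n}+\frac{n-1}{n}+\frac{1}{2n}=1$, giving $|f_n^{(m)}(v)-f_n^{(m)}(u)|\le |v-u|^\alpha$ exactly. Without the shrinking factor, the subadditivity $|v-n|^\alpha+|n-u|^\alpha\ge |v-u|^\alpha$ (for $\alpha\le 1$) blocks the estimate across the node $x=n$ regardless of how small you make the transition-zone contribution.

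Your proposed workaround---subtract the degree-$m$ Taylor polynomial of $f$ at $n$, then ``add back a globally bounded surrogate''---does not close the gap. Subtracting the Taylor polynomial at $n$ makes the right extension by zero $\cC^m$, but the left node $-n$ still carries nonzero data of size $\sim n^s$, so you are back to the same gluing problem there; and any bounded surrogate for the Taylor polynomial that equals it on $[-n,n]$ must itself transition to something bounded, re-introducing a H\"older contribution you cannot absorb without slack. The positivity claim via Lemma~\ref{Lem:Two-point_Hermite_interpolation_polynomials}(c) and the growth bound $|f_n|\le a+b|\cdot|^s$ are fine and match the paper, but the core H\"older estimate needs the $\frac{n-1}{n}$ trick, not Taylor subtraction.
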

\begin{proof} Let $m\in \N_0$ and $\alpha\in\mathopen]0,1\mathclose]$ with $s = m +\alpha$.
We will use the notation of Lemma~\ref{Lem:Two-point_Hermite_interpolation_polynomials}
with $m_1\coloneqq m_2\coloneqq m$.

Let $n\in\N$. We define $y\in V=\R^{\{0,\ldots,m\}}\times\R^{\{0,\ldots,m\}}$ by $y_{0,j}\coloneqq \frac{n-1}{n}f^{(j)}(n)$
and $y_{1,j}\coloneqq 0$ for $j\in\{0,\ldots,m\}$, and we then
apply Lemma~\ref{Lem:Two-point_Hermite_interpolation_polynomials}(d)  with $k\coloneqq m+1$, 
$x_0\coloneqq n$, and $x_1\coloneqq b_n$ with $b_n\ge n+1$ chosen so large that we have
$c\|y\|(b_n-n)^{-\alpha}\le \frac1{2n}$ and hence, by~\eqref{Eq:Bounds_for_interpolation},
so that $p_n \coloneqq p_{n,b_n,y}$ satisfies 
\la                                                              \label{Eq:Bound_p^{(m+1)}}
  \big|p^{(m+1)}_n(x)\big| &\le& \tfrac1{2n}(b_n-n)^{\alpha-1} \quad\text{ for } x\in [n,b_n].
\al
We analogously choose $a_n\le -n-1 $ with $|a_n|$ so large that the polynomial $q_n$ of degree at most
$2m+1$ and with $q_n^{(j)}(a_n)=0$ and  $q_n^{(j)}(-n)=\frac{n-1}{n}f^{(j)}(-n)$ for $j\in\{0,\ldots,m\}$ satisfies 
\la                                                                \label{Eq:Bound_q^{(m+1)}}
  \big|q^{(m+1)}_n(x)\big| &\le& \tfrac1{2n}(-n-a_n)^{\alpha-1}\quad\text{ for } x\in [a_n,-n].
\al
We finally put, using the de Finetti notation introduced in subsection~\ref{Subsec:Notation},      
\[ 
 f_n(x) &\coloneqq& (a_n\le x\le -n)q_n(x)+(|x|<n)\tfrac{n-1}{n}f(x) + (n\le x\le b_n)p_n(x)\quad\text{ for }x\in \R.
\]
Then $f_n\in \cC^m(\R,\R)$ and $f_n$ is bounded. Thus to get $f_n \in \cF_s^\infty$, it remains to prove that
\la                                                                \label{Eq:f_n^{(m)}_alpha-contraction}     
  \sup_{u,\,v\in\R,\,u< v} \frac{|f_n^{(m)}(v)-f_n^{(m)}(u)|}{|v-u|^\alpha} &\le & 1.
\al
So let $-\infty <u<v<\infty$, and let us abbreviate $g\coloneqq f_n^{(m)}$. Then $g(u) = g(u\vee a_n)$
and $g(v)= g(v\wedge b_n)$ and hence $|g(v)-g(u)|/|v-u|^\alpha\le |g(v\wedge b_n)-g(u\vee a_n)|/|v\wedge b_n - u\vee a_n|^\alpha$,
and so we may assume $a_n\le u$ and $v\le b_n$. In the case of $a_n \le u\le -n$ and $n\le v\le b_n$, we use
in the second step below~\eqref{Eq:Bound_p^{(m+1)}} and~\eqref{Eq:Bound_q^{(m+1)}}, 
and also~\eqref{Eq:f_n^{(m)}_alpha-contraction} with $f$ in place of $f_n$, to get 
\[
 |g(v)-g(u)| &\le& |g(v)-g(n)|+|g(n)-g(-n)| + |g(-n)-g(u)| \\
  &\le& \tfrac1{2n}(b_n-n)^{\alpha-1}|v-n| + \tfrac{n-1}{n}|n-(-n)|^\alpha + \tfrac1{2n}(-n-a_n)^{\alpha-1}|-n-u| \\
  &\le& \tfrac1{2n}|v-n|^\alpha + \tfrac{n-1}{n}|n-(-n)|^\alpha +\tfrac1{2n}|-n-u|^\alpha \\
  &\le& |v-n|^\alpha \vee |n-(-n)|^\alpha \vee |-n-u|^\alpha \\
  &\le& |v-u|^\alpha.
\]
The remaining cases needed to prove~\eqref{Eq:f_n^{(m)}_alpha-contraction} are similar or simpler. 

Obviously, $f_n\rightarrow f$ pointwise. Further, by Lemma~\ref{Lem:Two-point_Hermite_interpolation_polynomials}(c), 
we have $f_n\ge 0$ in case of $f=c|\cdot|^s$ with $c\ge 0$. 

Let $g\in\cF_s$. If $s\le 1$, then we have $|g(x)-g(0)|\le |x|^s$ and hence $|g|\le a+b|\cdot|^s$ for $a\coloneqq g(0)$ 
and $b\coloneqq 1$. If $s>1$, then  we have for $x\in \R$ the Taylor formula
\la                                    \label{Eq:Taylor-formula}
  g(x) &=& \sum_{j=0}^{m-1}\frac{g^{(j)}(0)}{j!}x^j
   + \int_0^1 \frac{(1-\lambda)^{m-1}}{(m-1)!}g^{(m)}(\lambda x)x^m\dd \lambda
\al
and get $|g(x)|\le \sum_{j=0}^{m-1} c_j|x|^j + \int_0^1 \frac{(1-\lambda)^{m-1}}{(m-1)!} \left(|g^{(m)}(0)| 
+ |x|^\alpha\right)|x|^m\dd \lambda \le a + b|x|^s$ for certain constants $c_j$ and $a,b$ depending only on   
the availability of bounds for the derivatives up to the order $m$ of $g$ at zero. 
Hence, by the construction of the sequence~$(f_n)$,  we have constants $a,b$ with $|f_n|\le a+b|\cdot|^s$ for each $n$. 
\end{proof}

%%%%%%%%%%%%%%%%%%%%%%%%%%%%%%%%%%%%%%%%%%%%%%%%%%%%%%%%%%%%%%%%%%%%%%%%%%%%%%%%%%%%%%%%%%%%%%%%% subsection 2.2
\subsection{On some special osculatory interpolations and a moment inequality}
%%%%%%%%%%%%%%%%%%%%%%%%%%%%%%%%%%%%%%%%%%%%%%%%%%%%%%%%%%%%%%%%%%%%%%%%%%%%%%%%%%%%%%%%%%%%%%%%%%%%%%%%%%%%%%%%%%
Here our goal is the elementary Lemma~\ref{Lem:|x-t|^3<=a+bx+cx^2+d|x|^3}, whose trivial consequence 
Lemma~\ref{Lem:E|X-t|^3<=E(a+bX+cX^2+d|X|^3)} is used in the final Step~7 
of the proof of Theorem~\ref{Thm:Main} in section~\ref{Sec:Main_proofs}.
As for the title of the present subsection, recall that a function $f$ is called first order osculatory at a point 
$x_0$ to a function $g$ if we have $f(x_0)=g(x_0)$ and  $f'(x_0)= g'(x_0)$. 

Let $I\subseteq\R$ be a nondegenerate interval and $s\in\N_0$. Then, following here closely~\cite{PinkusWulbert2005},
a function $f\colon I\to\R$ is said to be $s$-convex on $I$ iff for every choice of $s+1$ pairwise distinct points  $x_0,\ldots,x_s\in I$ the $(s+1)$-st divided difference $[x_0,x_1,\ldots,x_s;f]$ is positive (recall that ``positive'' means $\ge0$, see 
subsection~\ref{Subsec:Notation}). This divided difference may be defined as
$$
[x_0,x_1,\ldots,x_s;f]\coloneqq \frac{U(x_0,\ldots,x_s;f)}{V(x_0,\ldots,x_s)},
$$
where 
$$
U(x_0,\ldots,x_s;f) \coloneqq \left|
\begin{array}{cccc}
1&1&\ldots&1\\
x_0&x_1&\ldots&x_s\\
\vdots&\vdots&\ddots&\vdots\\
x_0^{s-1}&x_1^{s-1}&\ldots&x_s^{s-1}\\
f(x_0)&f(x_1)&\ldots &f(x_s)
\end{array}
\right|,
$$
$V(x_0,\ldots,x_s)\coloneqq U(x_0,\ldots,x_s;(\cdot)^s)=\prod_{i<j}(x_j-x_i)$ is the Vandermonde determinant.  Alternatively one can set~\cite[Chapter\,15]{Kuczma2009}
$$
[x;f]=f(x),\quad [x_0,x_1,\ldots,x_k;f]= \frac{[x_1,\ldots,x_k;f]-[x_0,\ldots,x_{k-1};f]}{x_k-x_0}
  \quad\text{for } k \in\{1,\ldots,s\}.
$$
As  $V(x_0,\ldots,x_s)>0$ for $x_0<x_1<\ldots<x_s,$ a function $f$ is $s$-convex on $I$ iff we have
$U(x_0,\ldots,x_s;f)\ge0$ for all $x_0<x_1<\ldots<x_s\in I.$ Thus, from the definition it immediately follows that a function is $0$-convex iff it is nonnegative, $1$-convex iff it is nondecreasing, 
and $2$-convex iff it is convex in the usual sense. 
Higher order convexity was first  considered by Hopf in his dissertation~\cite{Hopf1926} and was further extensively 
developed by Popoviciu in his thesis~\cite{Popoviciu1933}.

If $ P(x_1,\ldots,x_s;f| \cdot )$  is the unique Lagrange polynomial of degree at most $s-1$ that 
interpolates $f$ at the points $x_1<x_2<\ldots<x_s,$ then~\cite{Popoviciu1933}, \cite[Chapter\,15]{Kuczma2009}
\[
 f(x)- P(x_1,\ldots,x_s;f|x) &=& \frac{U(x_1,\ldots,x_s,x;f) }{V(x_1,\ldots,x_s)} \,\ =\,\   
[x_1,\ldots,x_s,x;f]\prod_{i=1}^s(x-x_i),
\] 
and thus $f$ is $s$-convex on $I$ iff for every choice of $-\infty\eqqcolon x_0<x_1<\ldots<x_s<x_{s+1}\coloneqq+\infty$ we have
\[
 (-1)^{i+s}(f(x)-P(x_1,\ldots,x_s;f|x)) &\ge& 0 \quad \text{ for }  i\in\{0,\ldots,s\}, \
  x\in\mathopen]x_i,x_{i+1}\mathclose[\cap I. %,\ i\in\{0,\ldots,s\}.
\]

If $s\ge 2$, then a continuous function $f$ is $s$-convex on $I$ iff on the interior of $I$ the derivative $f^{(s-2)}$ exists and is convex~\cite{Hopf1926,Popoviciu1933,Kuczma2009}. If $f$ is $s$ times differentiable on $I$, then $f$ is $s$-convex iff $f^{(s)}\ge0$ on $I$~\cite{Popoviciu1933,Kuczma2009}.

%%%%%%%%%%%%%%%%%%%%%%%%%%%%%%%%%%%%%%%%%%%%%%%%%%%%%%%%%%%%%%%%%%%%%%%%%%% Lemma 2.3
\begin{Lem}                                        \label{Lem:Osc_with_Laplace convex}
%%%%%%%%%%%%%%%%%%%%%%%%%%%%%%%%%%%%%%%%%%%%%%%%%%%%%%%%%%%%%%%%5%%%%%%%%%%%%%%%%%%%%%%%%%%%%
Let $I\subseteq \R$ be an interval, $s,t\in I$ with $s\neq t$, and $f:I \rightarrow \R$ 
twice differentiable with 
\la                              \label{Eq:f_osculates_zero_twice}
  f(s)\,=\,f'(s)\,=\,f(t)\,=\,f'(t)\,=\,0
\al
and $f''$ convex on $I$. Then we have $f\ge 0$ on $I$. If further $u\in I\setminus\{s,t\}$ 
satisfies $f(u)=0$, then we have $f=0$ on the convex hull of $\{s,t,u\}$. 
\end{Lem}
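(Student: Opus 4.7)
The plan is to locate zeros of $f'$ and $f''$ via iterated Rolle, then use convexity of $f''$ to pin down its sign, and finally integrate back up. Without loss of generality assume $s<t$. Rolle applied to $f$ on $[s,t]$ produces $c\in(s,t)$ with $f'(c)=0$, and a further application to $f'$ on $[s,c]$ and $[c,t]$ yields $d_1\in(s,c)$ and $d_2\in(c,t)$ with $f''(d_1)=f''(d_2)=0$. Since $f''$ is convex on $I$ with two distinct zeros, the chord inequality gives $f''\le 0$ on $[d_1,d_2]$, while the monotonicity of slopes of a convex function gives $f''\ge 0$ on $I\setminus(d_1,d_2)$. Combining this sign pattern with $f'(s)=f'(c)=f'(t)=0$ I would deduce $f'\ge 0$ on $[s,c]$, $f'\le 0$ on $[c,t]$, $f'\le 0$ on $I\cap\mathopen]-\infty,s\mathclose]$, and $f'\ge 0$ on $I\cap[t,\infty\mathclose[$. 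A final integration using $f(s)=f(t)=0$ then yields $f\ge 0$ throughout $I$.

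For the second part, suppose $f(u)=0$ for some $u\in I\setminus\{s,t\}$. Since $f\ge 0$ on $I$, $u$ is a global minimum of $f$; at an interior point this gives $f'(u)=0$ and $f''(u)\ge 0$, while at a boundary point of $I$ the corresponding one-sided derivative relations suffice for what follows. Consider first $u\in(s,t)$. Applying Rolle to $f'$ on $[s,u]$ and $[u,t]$ produces zeros $\alpha\in(s,u)$ and $\beta\in(u,t)$ of $f''$, and the same chord argument as above yields $f''\le 0$ on $[\alpha,\beta]$; the constraint $f''(u)\ge 0$ then forces $f''(u)=0$. The crucial rigidity step is to upgrade the three pointwise zeros of $f''$ at $\alpha<u<\beta$ to $f''\equiv 0$ on the entire interval $[\alpha,\beta]$: for any $y\in(\alpha,u)$, the chord between $\alpha$ and $u$ gives $f''(y)\le 0$, and writing $u=\lambda y+(1-\lambda)\beta$ for a suitable $\lambda\in(0,1)$ together with $f''(u)=f''(\beta)=0$ and the convexity of $f''$ yields $f''(y)\ge 0$; the symmetric argument handles $y\in(u,\beta)$.

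With $f''\equiv 0$ on $[\alpha,\beta]$ it follows that $f'$ is constant there, and $f'(u)=0$ forces $f'\equiv 0$, whence $f\equiv f(u)=0$ on $[\alpha,\beta]$. On the side interval $[s,\alpha]$, convexity of $f''$ combined with its vanishing on $[\alpha,\beta]$ forces $f''\ge 0$, so $f'$ is nondecreasing with $f'(s)=f'(\alpha)=0$; hence $f'\equiv 0$ and $f\equiv f(s)=0$ on $[s,\alpha]$. The interval $[\beta,t]$ is handled symmetrically, yielding $f\equiv 0$ on $[s,t]=\conv\{s,u,t\}$. The remaining case $u\notin[s,t]$, say $s<t<u$, is analogous with $t$ now in the role of the interior zero inside $\conv\{s,t,u\}=[s,u]$. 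The main obstacle will be the rigidity step: it is precisely because $u$ lies in the \emph{interior} of the valley $[\alpha,\beta]$ that convex combinations on both sides of $u$ pin down $f''$ on the entire interval, rather than only at the Rolle nodes already located.
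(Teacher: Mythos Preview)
Your argument is correct but follows a genuinely different route from the paper's. The paper observes that convexity of $f''$ is exactly $4$-convexity of $f$, and then invokes the standard sign pattern of $f-p$ for the Hermite interpolant $p$ of degree $\le 3$ at four nodes $t_1\le t_2\le t_3\le t_4$ of a $4$-convex function. Taking $p\equiv 0$ with nodes $(s,s,t,t)$ gives $f\ge 0$ immediately; for the second part the paper simply rechooses the nodes as $(u,s,s,t)$, $(s,u,u,t)$, or $(s,t,t,u)$ according to the position of $u$, obtaining $f\le 0$ on the relevant interval in one stroke. Your approach instead stays entirely within Rolle's theorem and elementary convexity of $f''$: you manufacture two zeros of $f''$, read off the sign of $f''$ from the chord inequality, integrate up twice, and for the uniqueness part use the standard rigidity that a convex function with three zeros is identically zero between the outer two. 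The paper's route is shorter and more conceptual, relying on machinery already set up in the paper; yours is more elementary and self-contained, and has the mild bonus of explicitly exhibiting the monotonicity structure of $f'$ and the sign structure of $f''$.

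One point worth stating rather than leaving inside ``analogous'': in the case $s<t<u$ you need $f'(u)=0$ before you can apply Rolle to $f'$ on $[t,u]$. If $u$ is interior to $I$ this is immediate from the minimum, but if $u=\sup I$ the minimum only gives $f'(u)\le 0$. Here you should invoke the sign pattern already established in the first part, namely $f'\ge 0$ on $I\cap[t,\infty[$, to conclude $f'(u)=0$; alternatively, $\int_t^u f'=f(u)-f(t)=0$ with $f'\ge 0$ forces $f'\equiv 0$ on $[t,u]$ directly. Either way the gap closes, but it deserves a sentence.
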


\begin{proof}
The existence of $f''$ and its convexity yield the $4$-convexity of $f$; hence for every choice of 
$t_1,t_2,t_3,t_4\in I$ with $t_1<t_2<t_3<t_4$, the Lagrange interpolation polynomial $p$ of degree $\le3$
with $p(t_j)=f(t_j)$ for each $j$ satisfies for $x\in I$ respectively $f(x)\ge p(x)$ if 
$ t_4 \le x $ or $t_2\le x\le t_3$ or $x\le t_1$, and  $f(x)\le p(x)$ if $t_3 \le x \le t_4$  or $t_1\le x\le t_2$.
This continues to hold if some, but not all, of the $t_j$ coincide and $p$ is accordingly the corresponding Hermite interpolation 
polynomial, in view of the continuous dependence of the latter on $(t_1,t_2,t_3,t_4)$ due to the continuity of $f''$,
compare \cite[p.~119, Theorem~6.3]{DeVore_Lorentz1993}.

To prove now the lemma, we may assume  $s<t$. Assumption~\eqref{Eq:f_osculates_zero_twice} says that 
$p\coloneqq0$ is the Hermite interpolation polynomial of degree $\le 3$ for $f$ and the nodes  
$t_1\coloneqq t_2\coloneqq s$ and $t_3\coloneqq t_4 \coloneqq t$, and hence we get $f\ge 0$ on $I$. 
If further $u$ is as stated, then we prove also $f\leq0$ on the convex hull of $\{s,t,u\},$ by applying the previous paragraph to $p\coloneqq0$, but now with
$(t_1,t_2,t_3,t_4)\coloneqq(u,s,s,t)$ if $u<s$, $\coloneqq$~$(s,u,u,t)$ if $s<u<t$, 
using that then also $f'(u)=0$ due to $f\geq0$ and $f(u)=0$, and finally $\coloneqq$~$(s,t,t,u)$ if $t<u$. 
\end{proof}

%%%%%%%%%%%%%%%%%%%%%%%%%%%%%%%%%%%%%%%%%%%%%%%%%%%%%%%%%%%%%%%%%%%%%%%%%%% Lemma 2.4
\begin{Lem}                                            \label{Lem:|x-t|^3<=a+bx+cx^2+d|x|^3}
%%%%%%%%%%%%%%%%%%%%%%%%%%%%%%%%%%%%%%%%%%%%%%%%%%%%%%%%%%%%%%%%5%%%%%%%%%%%%%%%%%%%%%%%%%%%%
Let $f:\R\rightarrow \R$ be differentiable and let $s,t\in\R$ with $|s|\neq |t|$.

{\rm\textbf{(a)}}
There are unique $a,b,c,d\in\R$ such that 
\[
  g(x) &\coloneqq& a+bx+cx^2+d|x|^3 \quad\text{ for }x\in\R
\]
satisfies
\la                                        \label{Eq:Cond_osc_interpolation}
  g(s)=f(s), \ g'(s)=f'(s), \ g(t)=f(t), \ g'(t)=f'(t).
\al

{\rm\textbf{(b)}} If $f$ is a polynomial of degree at most $3$, then $g$ %from (a)
is a global upper or lower bound for~$f$. More precisely, 
if $f(x)=A+Bx+Cx^2+Dx^3$ for $x\in\R$, then we have the equivalence chains
\la
 f \le g \text{ on }\R &\Leftrightarrow& d\ge 0
   \,\ \Leftrightarrow\,\ D\cdot(s + t)  \ge 0  ,   \label{Eq:f_le_g}    \\
 f \ge g \text{ on }\R &\Leftrightarrow&  d\le 0
   \,\ \Leftrightarrow\,\ D\cdot(s +  t) \le 0,      \label{Eq:f_ge_g}    
\al
and the inequality between $f$ and $g$ in \eqref{Eq:f_le_g} or~\eqref{Eq:f_ge_g} is strict on 
all of $\R\setminus\{s,t\}$ iff $D\neq 0$ and $st<0$. %both hold.    
In any case, we  have $ a = A+Da_0,$ $b= B+Db_0,$ $c = C+Dc_0,$ $d = Dd_0,$  where 
\[
  a_0 = \frac{4|st|^3}{(s+t)(s^2+ 4|st| + t^2)},   &\qquad& b_0 = \frac{6s^2t^2}{s^2+ 4|st| + t^2},   \\
  c_0 =-\frac{12s^2t^2}{(s+t)(s^2+ 4|st| + t^2)}, &\qquad&  d_0 = \frac{\left(|s|+|t|\right)^3}{(s+t)(s^2+ 4|st| + t^2)}
\]
in case of $st\le 0$, and  $a_0=b_0=c_0=0$ and $d_0 = \sign(s)=\sign(t) $ in case of $st>0$. 

{\rm\textbf{(c)}} 
If $f(x)=|x-r|^3$  for $x\in\R$, with some $r\in \R\setminus\{0\}$, and if $s =v\cdot\sign(r)$ and $t=-u\cdot\sign(r)$ for some $u,v$ with $u>v\ge0,$ then we have $f\le g$ on $\R$,  and this inequality is strict on $\R\setminus\{s,t\}$ unless $v=0$. 
 More explicitly, 
\begin{equation}                       \label{|x-t|^3<=a+bx+cx^2+d|x|^3}
|x-r|^3 \,\ \le\,\ a + bx + cx^2 + d|x|^3,
\end{equation}
where $a=a_r(u,v),$ $b=b_r(u,v),$ $c=c_r(u,v),$ $d=d_r(u,v),$ with
\la
 a_r(u,v) &=& |r|^3 + \frac{4u^3v^3}{(u-v)(u^2+4uv+v^2)},              \label{Eq:a_r(u,v)_if_v_le_1}\\
 b_r(u,v) &=& -\sign(r)\left(3r^2 + \frac{6u^2v^2}{u^2+4uv+v^2}\right), \label{Eq:b_r(u,v)_if_v_le_1}\\
 c_r(u,v) &=& 3|r| - \frac{12u^2v^2}{(u-v)(u^2+4uv+v^2)},              \label{Eq:c_r(u,v)_if_v_le_1} \\
 d_r(u,v) &=& \frac{(u+v)^3}{(u-v)(u^2+4uv+v^2)}                       \label{Eq:d_r(u,v)_if_v_le_1} 
\al
for $v\le |r|$ and 
$$
a_r(u,v)= |r|\frac{ 6u^{4} v^{2} \!+\!6u^{2} v^{4} \!+\!12 u^{3} v^{2}|r| \! -\!12u^{2} v^{3}|r|\! -\!4u^{3} vr^2 \!-\!4 u v^{3}r^2\! -\!u^{4}r^2 \!-\!v^{4}r^2 \!+\! 6u^{2} v^{2}r^2 }
{\left(u - v\right) \left(u + v\right) \left(u^{2} + 4 u v + v^{2}\right)},
$$
\[
b_r(u, v) &=& 3r\frac{-4u^{2} v^{2}  -4u^{3} v -4u v^{3} -3u^{2} v|r|  +3u v^{2}|r| +u^{3}|r| -v^{3}|r| -4uvr^2}{\left(u + v\right) \left(u^{2} + 4 u v + v^{2}\right)}, \\
c_r(u, v) &=& 3|r|\frac{ u^{4}  +v^{4} -6u^{2} v^{2} -4u^{3} v -4u v^{3} +4u^{3}|r| -4v^{3}|r|  +2u^{2}r^2  +2v^{2}r^2}{\left(u - v\right) \left(u + v\right) \left(u^{2} + 4 u v + v^{2}\right)}, \\
d_r(u, v) &=& \frac{(u-v+2|r|) \left(u^{2} + v^{2} + 4 u v -2 u|r| +2 v|r| - 2r^2\right)}{\left(u - v\right) \left(u^{2} + 4 u v + v^{2}\right)}
\]
for $v>|r|$. Equality in~\eqref{|x-t|^3<=a+bx+cx^2+d|x|^3} is attained at least {\rm(}and at most as well if $v>0${\rm)} at the two  points ${x=-u\cdot\sign(r)}$ and ${x=v\cdot\sign(r)}.$ 
\end{Lem}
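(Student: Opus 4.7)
The plan is to address the three parts in sequence.

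For part (a), I would set up~\eqref{Eq:Cond_osc_interpolation} as a $4\times 4$ linear system $M(s,t)(a,b,c,d)^{\top}=(f(s),f'(s),f(t),f'(t))^{\top}$; existence and uniqueness reduce to showing $M(s,t)$ nonsingular whenever $|s|\neq|t|$. When $st>0$, $g$ restricted to the half-line containing $s$ and $t$ is a single cubic, so the four Hermite conditions there uniquely determine $a,b,c,d$. When $st<0$, $g$ is represented by two cubics on $(-\infty,0]$ and $[0,\infty)$ that must $\cC^{2}$-match at $0$ (since $|x|^{3}\in\cC^{2}$); combining this matching with the double-zero conditions of the homogeneous problem yields an inconsistency unless $|s|=|t|$ or $a=b=c=d=0$.

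For part (b), I would use the linearity of $(a,b,c,d)$ in the data $(A,B,C,D)$ granted by (a). Since $g=f$ solves the system when $f\in\{1,x,x^{2}\}$, it remains to solve the case $f(x)=x^{3}$: for $st>0$ this gives $a_{0}=b_{0}=c_{0}=0$ and $d_{0}=\sign(s)=\sign(t)$ (as $g$ must equal $x^{3}$ on the common half-line), while for $st<0$ the stated closed forms follow from direct resolution of the $4\times 4$ system. To deduce $f\le g\Leftrightarrow d\ge 0$, apply Lemma~\ref{Lem:Osc_with_Laplace convex} to $h\coloneqq g-f$: its second derivative $h''(x)=2(c-C)+6d|x|-6Dx$ is convex iff $d\ge 0$ (since $6d|x|$ is convex iff $d\ge 0$, and the remaining terms are linear), while the osculation conditions give $h(s)=h'(s)=h(t)=h'(t)=0$. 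The equivalence $d\ge 0\Leftrightarrow D(s+t)\ge 0$ follows from $d=Dd_{0}$ and $\sign(d_{0})=\sign(s+t)$, readable off the formula for $d_{0}$. Strictness on $\R\setminus\{s,t\}$ when $D\neq 0$ and $st<0$ comes from the second half of Lemma~\ref{Lem:Osc_with_Laplace convex}: an extra zero of $h$ forces $h\equiv 0$ on $\conv\{s,t,u\}$, which crosses $0$ when $st<0$, thereby forcing both cubic pieces of $h$ to vanish identically, hence $d=D=0$.

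For part (c), I would first reduce to $r>0$ by the symmetry $x\mapsto-x$. In the case $v\le r$, both $s=v$ and $t=-u$ lie in the region $\{x\le r\}$ where $f$ coincides with the cubic $f_{-}(x)\coloneqq(r-x)^{3}$; part (b) then applies with $A=r^{3}$, $B=-3r^{2}$, $C=3r$, $D=-1$, giving the closed forms~\eqref{Eq:a_r(u,v)_if_v_le_1}--\eqref{Eq:d_r(u,v)_if_v_le_1} and, since $D(s+t)=u-v>0$, the bound $g\ge f_{-}=f$ on $(-\infty,r]$. For $x\ge r$ set $h\coloneqq g-f$; on $(r,\infty)$ we have $h'''(x)=6(d-1)\ge 0$, since $d=(u+v)^{3}/K\ge 1$ with $K\coloneqq(u-v)(u^{2}+4uv+v^{2})$ and equality only if $v=0$ (an identity I would verify by expanding $(u+v)^{3}-K=2v^{2}(3u+v)$). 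Combined with the explicit factorisation $h(x)=(2u^{2}(u+3v)/K)(x-v)^{2}(x+2uv/(u+3v))$ on $[0,r]$, which yields $h(r),h'(r),h''(r)\ge 0$, three successive integrations from $r$ give $h\ge 0$ on $[r,\infty)$.

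For the case $v>r$, (b) no longer applies directly since $f=f_{+}(x)\coloneqq(x-r)^{3}$ near $s$ while $f=f_{-}$ near $t$. The plan is to use the decomposition $f(x)=f_{-}(x)+2(x-r)_{+}^{3}$ together with linearity of the osculatory interpolation (from (a)) to write $g=g_{-}+2g^{\mathrm{tr}}$, where $g_{-}$ comes from (b) and $g^{\mathrm{tr}}$ interpolates $(x-r)_{+}^{3}$ with osculation data $\bigl((v-r)^{3},3(v-r)^{2},0,0\bigr)$ at $s,t$; solving the corresponding $4\times 4$ system yields the more complex formulas stated for $v>|r|$. The inequality $f\le g$ then reduces to $g^{\mathrm{tr}}(x)\ge(x-r)_{+}^{3}$ on $\R$, which is trivial on $(-\infty,r]$ and can be handled on $[r,\infty)$ by the same three-integrations argument as in the previous case. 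I expect the main obstacle to be this final step in the regime $v>r$, where the double zero of $g-f$ on $[r,\infty)$ now lies in the interior at $s=v$; to exclude a further real zero one must locate, using the explicit formula for $d_{r}(u,v)$, the remaining root of the cubic $g-f$ on $[r,\infty)$ and show it falls below $r$.
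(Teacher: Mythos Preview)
Your approach to parts~(a), (b), and the case $v\le|r|$ of~(c) is essentially the paper's own. The paper also computes the $4\times4$ determinant explicitly for (a), also invokes Lemma~\ref{Lem:Osc_with_Laplace convex} on $g-f$ for (b), and for (c) with $v\le|r|$ likewise applies (b) to the cubic branch of $f$ containing both interpolation nodes and then performs a direct analysis on the remaining half-line (the paper normalises to $r=-1$ rather than $r>0$, and computes $h''$, $h'$, $h$ at the boundary point rather than using your factorisation, but these are cosmetic differences). Your identity $(u+v)^3-(u-v)(u^2+4uv+v^2)=2v^2(3u+v)$ and the three-integrations argument are correct.

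For the case $v>|r|$ in~(c), the paper simply cites~\cite[Lemma~1]{Shevtsova2017}. Your decomposition $f=f_-+2(\cdot-r)_+^3$ and the resulting splitting $g=g_-+2g^{\mathrm{tr}}$ are valid, but the claimed reduction to $g^{\mathrm{tr}}\ge(\cdot-r)_+^3$ fails: this inequality is \emph{not} ``trivial on $(-\infty,r]$'', and in fact is false there. For instance with $r=1$, $u=3$, $v=2$ one computes $d'=-13/37<0$ and $g^{\mathrm{tr}}(0)=a'=-729/185<0=(0-r)_+^3$. The original inequality $g\ge f$ still holds at $x=0$ because the positive term $g_--f_-$ dominates, but your sufficient condition is simply not satisfied, so this route does not close the argument. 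Your fallback idea of locating the third root of the cubic $g-f$ on $[r,\infty)$ is reasonable but would also require separate treatment of the intervals $(-\infty,0]$ and $[0,r]$, where $g-f$ is given by two further cubics and the osculation nodes $t=-u$ and $s=v$ lie on opposite sides of the kink of~$f$; you have not indicated how to handle those pieces.
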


\begin{figure}
\includegraphics[width=0.49\textwidth]{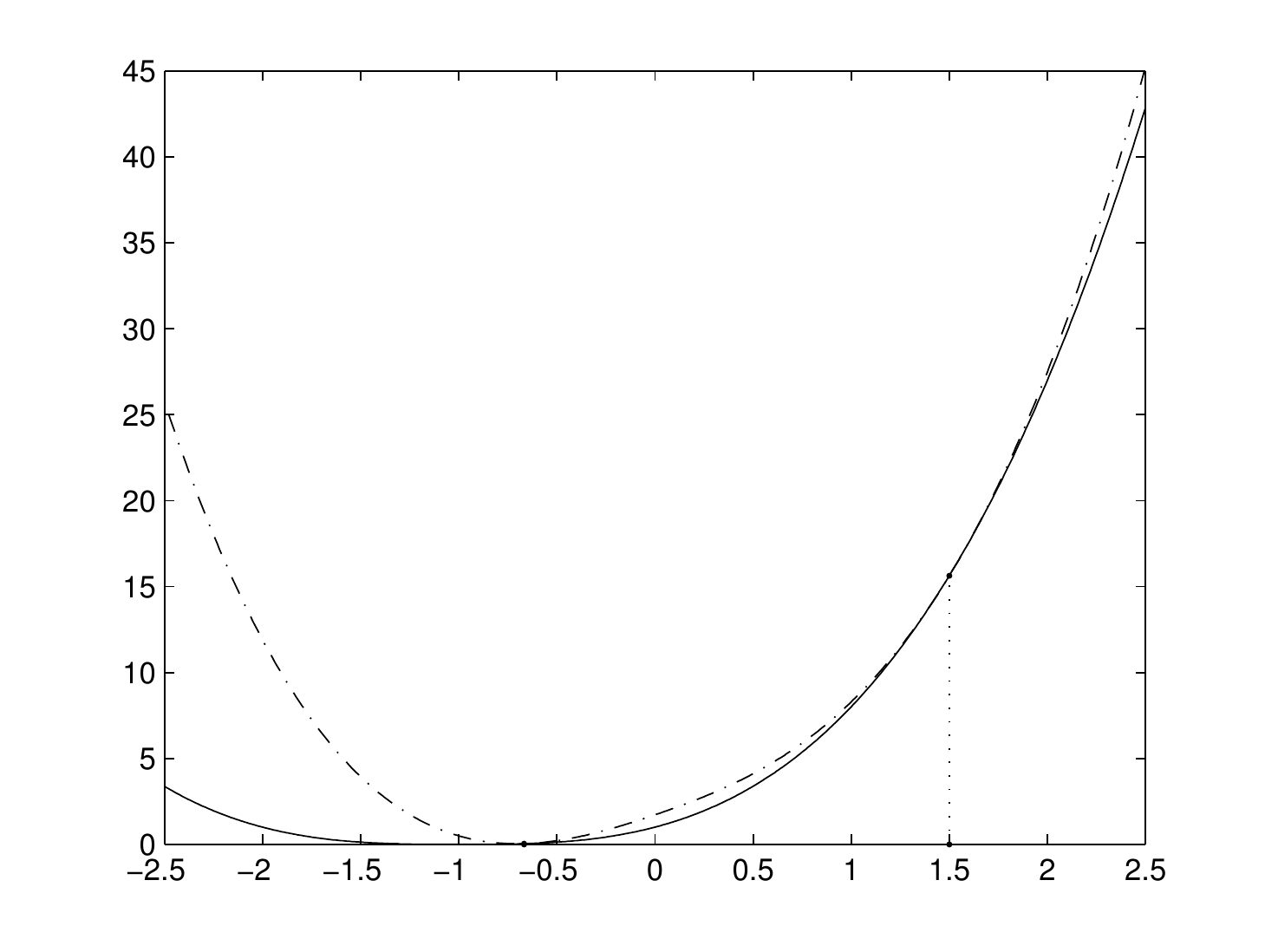}
\includegraphics[width=0.49\textwidth]{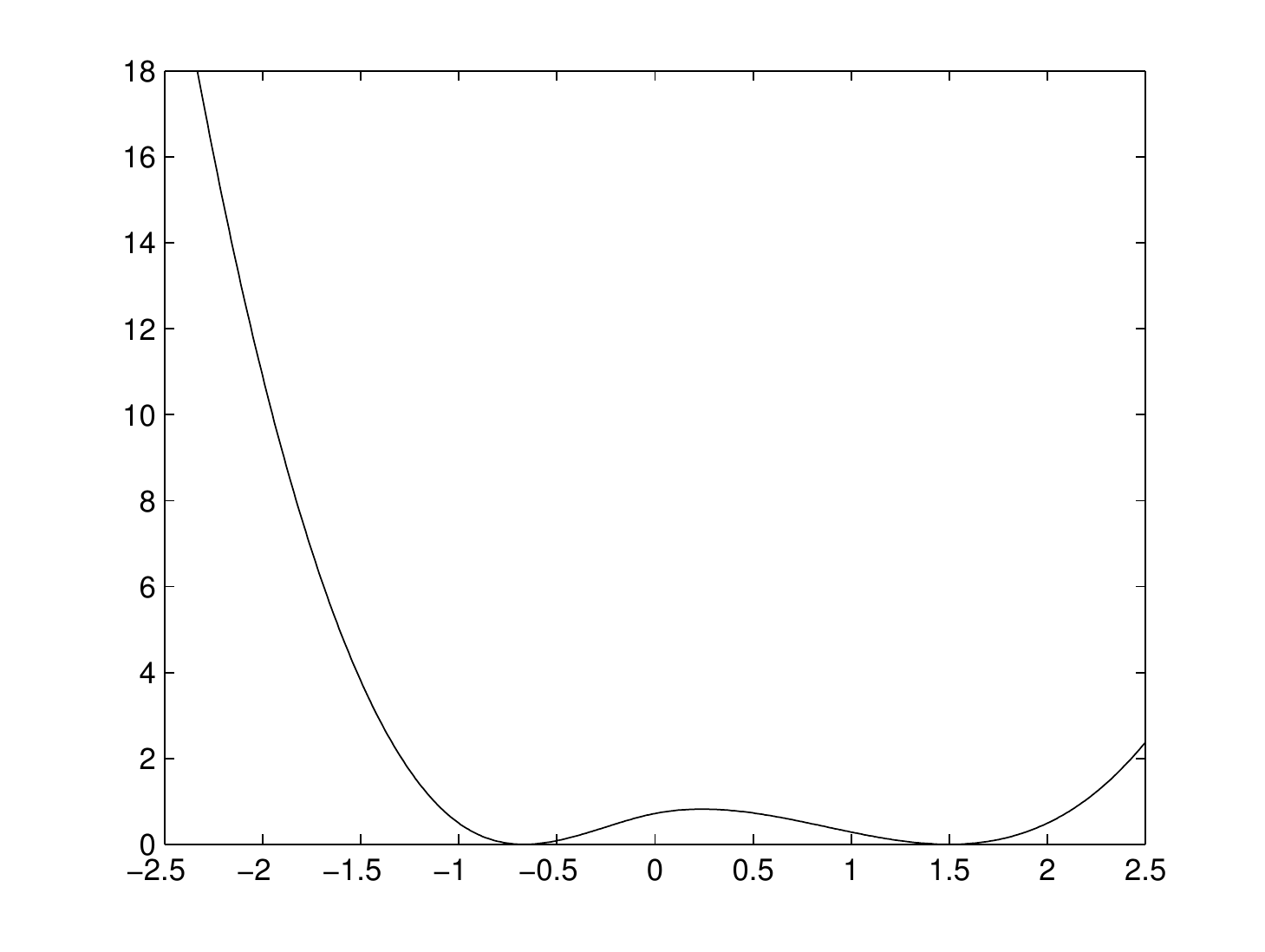}
\caption{Left: plots of the functions $f(x)=|x+1|^3$ (solid line)
and $g(x)=a+bx+cx^2+d|x|^3$ (dashdot line) from
Lemma~\ref{Lem:|x-t|^3<=a+bx+cx^2+d|x|^3}(c) with $u=3/2$, $v=2/3$.
Right: plot of the difference $g(x)-f(x)$.}
\label{Fig:|x-t|^3<=a+bx+cx^2+d|x|^3}
\end{figure}

Using monotonicity of the expectation, Lemma~\ref{Lem:|x-t|^3<=a+bx+cx^2+d|x|^3}(c) trivially yields the following

\begin{lem}                                          \label{Lem:E|X-t|^3<=E(a+bX+cX^2+d|X|^3)}
For every $r\in\R\setminus\{0\},$ $u>v\ge0$ and every $P\in\Prob_3(\R),$ we have
$$
\int|x-r|^3\dd P(x)\,\ \le\,\  a + b \int x\dd P(x) + c \int x^2\dd P(x) + d \int |x|^3\dd P(x),
$$
where the coefficients $a=a_r(u,v),b=b_r(u,v),c=c_r(u,v),$ and $d=d_r(u,v)$ 
are defined in Lemma~\ref{Lem:|x-t|^3<=a+bx+cx^2+d|x|^3}(c), with equality iff the distribution $P$ is concentrated in the two points $v\cdot\sign(r),$ $-u\cdot\sign(r)$.
\end{lem}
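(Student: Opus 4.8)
The plan is to carry out exactly the integration that the sentence preceding the lemma announces. First I would invoke Lemma~\ref{Lem:|x-t|^3<=a+bx+cx^2+d|x|^3}(c) with the nodes $s\coloneqq v\cdot\sign(r)$ and $t\coloneqq -u\cdot\sign(r)$; since $r\ne0$ and $u>v\ge0$ these are admissible (indeed $|s|=v\ne u=|t|$), and part~(c) furnishes the pointwise bound
\[
  |x-r|^3 &\le& a_r(u,v)+b_r(u,v)\,x+c_r(u,v)\,x^2+d_r(u,v)\,|x|^3 \qquad\text{ for all }x\in\R .
\]
Before integrating I would record that, because $P\in\Prob_3(\R)$, each of the integrals $\int|x-r|^3\dd P(x)$, $\int|x|^3\dd P(x)$, $\int x^2\dd P(x)$, $\int x\dd P(x)$ is finite --- for instance $|x-r|^3\le4(|x|^3+|r|^3)$ and $x^2,|x|\le1+|x|^3$ --- so that the right-hand side above integrates to a genuine finite real number and plain monotonicity of the integral turns the pointwise bound into the asserted moment inequality, with $a=a_r(u,v)$, $b=b_r(u,v)$, $c=c_r(u,v)$, $d=d_r(u,v)$.

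For the equality clause I would set $h(x)\coloneqq\big(a_r(u,v)+b_r(u,v)x+c_r(u,v)x^2+d_r(u,v)|x|^3\big)-|x-r|^3$, so that $h\ge0$ on $\R$ by part~(c) and the gap between the two sides of the asserted inequality equals $\int h\dd P$. A nonnegative integrable function has integral zero iff it vanishes $P$-almost everywhere, so equality holds iff $P$ is carried by the zero set $\{h=0\}$. By the closing assertion of Lemma~\ref{Lem:|x-t|^3<=a+bx+cx^2+d|x|^3}(c) --- namely that, for $v>0$, the pointwise inequality is \emph{strict} off $\{s,t\}$ --- this zero set is exactly the two-point set $\{v\cdot\sign(r),\,-u\cdot\sign(r)\}$, which is the claimed characterization of equality.

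The argument is essentially mechanical, and the only point that is not completely automatic is the equality case: there one needs not merely $h\ge0$ but that $h$ is strictly positive away from the two interpolation nodes, which is precisely the strict-inequality half of Lemma~\ref{Lem:|x-t|^3<=a+bx+cx^2+d|x|^3}(c) (available for $v>0$; for $v=0$ the zero set of $h$ is larger and the characterization should be read accordingly). No real obstacle remains.
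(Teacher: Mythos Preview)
Your proof is correct and is exactly the approach the paper takes: the paper's entire ``proof'' is the one-line remark preceding the lemma, namely that monotonicity of the expectation applied to the pointwise inequality of Lemma~\ref{Lem:|x-t|^3<=a+bx+cx^2+d|x|^3}(c) trivially yields the result. Your parenthetical remark about the case $v=0$ is apt and in fact more careful than the paper --- the ``only if'' direction of the equality clause as stated does require $v>0$ (since for $v=0$ the zero set of $h$ is a half-line, not just two points), and the paper's sole application of this lemma, in Step~7 of the proof of Theorem~\ref{Thm:Main}, indeed uses it only with $v>0$.
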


\begin{Rem}
Lemma~\ref{Lem:E|X-t|^3<=E(a+bX+cX^2+d|X|^3)} generalizes~\cite[Lemma~2]{Shevtsova2017}, where the stated inequality was proved only in the case of $v>|r|$.
\end{Rem}

\begin{proof}[Proof of Lemma~\ref{Lem:|x-t|^3<=a+bx+cx^2+d|x|^3}] (a) Condition~\eqref{Eq:Cond_osc_interpolation} is a system 
of linear equations for $a,b,c,d$  with the determinant
\[
 \begin{vmatrix}
  1 & s & s^2 & |s|^3 \\
  0 & 1 & 2s  & 3s|s|\\
  1 & t & t^2 & |t|^3 \\
  0 & 1 & 2t  & 3t|t| 
 \end{vmatrix}
 &=&   
 \begin{vmatrix}
  1 & 2s  & 3s|s|\\
  t-s & t^2-s^2 & |t|^3-|s|^3 \\
  1 & 2t  & 3t|t| 
 \end{vmatrix}      \\
 &=&
 \begin{vmatrix}
  t^2-s^2 - 2s(t-s) & |t|^3-|s|^3 -3(t-s)s|s|  \\
  2t-2s            & 3t|t|-3s|s|   
 \end{vmatrix}    \\
  &=& \left(t-s\right)  
\begin{vmatrix}
  t-s  & |t|^3 +2|s|^3 -3ts|s|  \\
  2    & 3t|t|-3s|s|   
 \end{vmatrix}    \\
  &=& (t-s)\left( (t-s)(3t|t|-3s|s|)  -2|t|^3-4|s|^3+6ts|s|    \right)  \\
  &=& (t-s)\left( |t|^3-|s|^3 +3ts|s| -3ts|t|    \right)  \\
  & =&
 \left(t-s\right)\left(|t|-|s|\right)\left(t^2+s^2+|ts|-3ts\right)
  \,\ \neq \,\ 0.
\]

(b) 
Lemma~\ref{Lem:Osc_with_Laplace convex} applied to $g-f$ or to $f-g$ yields the first equivalences in~\eqref{Eq:f_le_g} 
and~\eqref{Eq:f_ge_g}, even without knowing $d$ explicitly. One next easily checks in case of $A=B=C=0$ and $D=1$ 
that the stated formulae for $a,b,c,d$  solve the interpolation problem~\eqref{Eq:Cond_osc_interpolation}.
The case of arbitray $A,B,C,D$ then follows by the linearity of the interpolation operator mapping~$f$
to~$g$ according to part~(a). Using now the explicit formula for $d=Dd_0$, 
one obviously gets  the second~equivalences in \eqref{Eq:f_le_g} and~\eqref{Eq:f_ge_g}.
 
In case of $D=0$ or $st\ge0$, we have $g$ identical to $f$ at least on a half-line.
In case of $D\neq 0$ and $st<0$, the existence of any $u\in \R\setminus\{s,t\}$ with 
$f(u)=g(u)$ would imply by Lemma~\ref{Lem:Osc_with_Laplace convex} that $f=g$ holds 
in some neighbourhood of zero, which implies $D=d=0$, a contradiction to $D\neq 0$.

(c) The case $v>|r|$ is proved in~\cite[Lemma~1]{Shevtsova2017}. Let now $v\le|r|.$ By writing $f(x)=  |r|^3 \left|\frac{x}{-r} + 1\right|^3$ and considering $\frac{x}{-r}$ as the new variable,
we may assume that $r=-1$, that is, $f(x)=|x+1|^3$ for $x\in\R$, and
\la                                                    \label{Eq:v<u_etc}
 - 1 &\le& s \,\ =\,\  -v\,\ \le\,\  0\,\ \le\,\ v \,\ <\,\  t \,\ =\,\  u, \quad v\,\ \le\,\ 1.
\al
Let $\widetilde{f}(x)=(x+1)^3$ for $x\in\R$. 
Since  $ s,t \in [-1,\infty[$ and $f= \widetilde{f}$ on  $[-1,\infty[$,
our present $g$ is also the osculatory interpolation to the polynomial  $\widetilde{f}$.
Hence the present formulae for the coefficients of $g$ follow from part~(b) with $A=D=1$, $B=C=3$, 
and in view of $s+t=u-v>0$ we get from~\eqref{Eq:f_le_g} that 
$f(x) = \widetilde{f}(x)  \le g(x)$ holds for $x\in[-1,\infty[$, 
and in view of  $st=-uv\leq0$ we have 
either
equality iff $x\in\{s,t\}$, or $s=v=0$. 
So, setting 
\[
&& h(x)\, \coloneqq\,  g(x)-f(x) \, =\, %a+bx+cx^2-dx^3+(x+1)^3= 
    a+1+(b+3)x+(c+3)x^2+(1-d)x^3 \quad\text{ for }x\in\mathopen]-\infty,-1\mathclose],
\]
it is enough to prove now $h<0$ on $\mathopen]-\infty,-1\mathclose[$. 

We have
\[ 
  h'(x)\ =\ b+3+2(c+3)x+3(1-d)x^2, &&
  h''(x) \ =\  2(c+3) +6(1-d)x,
\]
and, using $u>v\ge0$ from~\eqref{Eq:v<u_etc} and also~\eqref{Eq:d_r(u,v)_if_v_le_1}, we get 
\[
  W \,\ \coloneqq\,\  (u-v)(u^2+4uv+v^2) \,\ >\,\ 0, && \quad d-1 \,\ = \,\  \frac{2v^2(3u+v)}{W} \,\ \ge  \,\ 0 
\]
and hence, for $x\in\mathopen]-\infty,-1\mathclose[$, using~\eqref{Eq:c_r(u,v)_if_v_le_1} with $r=-1$ in the central step,
and $v\in [0,1]$ from~\eqref{Eq:v<u_etc} in the last,
\[
  h''(x) &\ge &  2(c+3) + 6(d-1) 
   \,\ = \,\ \frac{12}{W}u^2\left( u+3v-2v^2  \right) \,\ >\,\ 0
\]
Thus $h'$ is strictly increasing, and hence we get, for $x\in\mathopen]-\infty,-1\mathclose[$,
$$
h'(x)\,\ < \,\  h'(-1) \,\ =\,\ b-2c-3d \,\ = \,\ 
-\frac{6u^2(1-v)(u+3v+v(u-v))}{W}\,\ \leq\,\ 0,
$$
so that $h$ is strictly decreasing, and we get, again for $x\in\mathopen]-\infty,-1\mathclose[$,
$$
h(x)\,\ >\,\ h(-1) \,\ = \,\ a-b+c+d\,\ =\,\ \frac{2u^2(v-1)^2(2uv+u+3v)}{W}\,\ \ge\,\ 0
$$
as desired.
\end{proof}

%%%%%%%%%%%%%%%%%%%%%%%%%%%%%%%%%%%%%%%%%%%%%%%%%%%%%%%%%%%%%%%%%%%%%%%%%%%%%%%%%%%%%%%%%% subsection 2.
\subsection{Sign change counting}
%%%%%%%%%%%%%%%%%%%%%%%%%%%%%%%%%%%%%%%%%%%%%%%%%%%%%%%%%%%%%%%%%%%%%%%%%%%%%%%%%%%%%%%%%%%%%%%%%%%%%%%%%%%%%%
The notation and facts of this subsection are used in the formulation and the proof of 
Theorem~\ref{Thm:CrossingPoints}, which in turn is used in Steps~6 and~7 of the proof of Theorem~\ref{Thm:Main}
in section~\ref{Sec:Main_proofs}. Lemma~\ref{Lem:Ineq_S(f)_S(f')} 
refines~\cite[Lemma~4.2]{DenuitLefevreShaked1998}.
 
For sets $A,B\subseteq \R$ and $n\in \N_0$, we put $A^n_{<}\coloneqq \{x\in A^n: x_1<x_2<\ldots<x_n\}$,
$A^n_{\le}\coloneqq \{x\in A^n: x_1\le x_2\le \ldots\le x_n\}$, and $A\le  B$ $:\Leftrightarrow$
$x\le y$ for every choice of $x\in A$ and $y\in B$, and we define $A<B$ similarly. 

Let now $D\subseteq \R$ and let $f:D\rightarrow \R$ be a function.
Then, with a notation as in~\cite[p.~20]{Karlin1968}, one calls
\[
  S^-(f) &\coloneqq& \sup\{n\in\N_0 : \exists\,x\in D_<^{n+1} \text{ with }f(x_i)f(x_{i+1})<0 
   \text{ for }i\in\{1,\ldots,n\}  \} \\  & \in & \N_0\cup\{\infty\} 
\]
the {\em (possibly infinite)  number of (inequivalent) sign changes} of $f$, and 
the restrictions of $f$ obey the rule
\la                                                                     \label{Eq:Sign_changes_and_restrictions}
  S^-(f|_{A\cup B}) &\le& S^-(f|_{A})+ S^-(f|_{B}) + 1 \quad\text{ for $A,B\subseteq D$ with $A\le B$}.
\al

Let us from now on assume for simplicity that  $D=I$ is an interval. For $n\in\N_0$ then clearly
$S^-(f)=n$ is equivalent to the following condition: There exist a $z =(z_1,\ldots,z_n)\in I^n_\le$ and nonempty
(but possibly one-point) intervals $I_0,\ldots,I_n$ with $\bigcup_{j=0}^n I_j =I$
and such that, for $j\in\{0,\ldots,n\}$, we have $f(x)f(y)\ge 0$ for $x,y\in I_j$, 
but in case of $j\ge 1$ also $\sup I_{j-1} = z_j = \inf I_j$ and $f(x)f(y)<0$ for some
$x\in  I_{j-1}$ and $y\in  I_j$.   If this condition holds, let us call every $z$ as above 
a {\em sign change tuple} of $f$, every entry $z_i$ of such a $z$ a {\em sign change} of $f$, 
and two different sign changes of $f$ {\em inequivalent} if they both occur in one sign change tuple. 
If in addition $f$ is left- or right-continuous, then obviously every such 
$z$ belongs to $I^n_<$ and the corresponding intervals $I_j$ are nondegenerate.  Let us finally  call $f\colon  I\to\R$ {\em lastly positive} if 
we have  $f\ge 0$ on~$I$ or there is an $x_0\in I$ with $f(x_0) >0 $ and $f\ge 0$ on $\mathopen]x_0,\infty\mathclose[\cap I$, 
and {\em essentially lastly positive} if 
we have  $f\ge 0$ Lebesgue-a.e.~on~$I$ or there is an $x_0\in I$ with $f \ge 0 $ Lebesgue-a.e.~on $\mathopen[x_0,\infty\mathclose[\cap I$
and not  $f=0$ Lebesgue-a.e.~on $\mathopen[x_0,\infty\mathclose[\cap I$.

We will need  the following variant of Rolle's theorem.
%%%%%%%%%%%%%%%%%%%%%%%%%%%%%%%%%%%%%%%%%%%%%%%%%%%%%%%%%%%%%%%%%%%%%%%%%%%%%%%%%%%%%%%  Lemma 2.
\begin{Lem}                                                                       \label{Lem:Rolle}
%%%%%%%%%%%%%%%%%%%%%%%%%%%%%%%%%%%%%%%%%%%%%%%%%%%%%%%%%%%%%%%%%%%%%%%%%%%%%%%%%%%%%%%%%%%%%%%%%%%
Let $I\subseteq\R$ be an interval and let $f:I\rightarrow [0,\infty[$ be absolutely continuous,
not identically zero, and vanishing in the limit at the boundary points $\inf I$ and $\sup I$.
Then there exist $\xi,\eta\in I$ with $\xi<\eta$ and $f'(\xi) > 0> f'(\eta)$. 
\end{Lem}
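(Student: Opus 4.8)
The plan is to reduce this to the classical Rolle theorem by first extracting a finite subinterval on which $f$ is small at the endpoints but attains a strictly positive maximum in the interior. First I would note that since $f$ is not identically zero, there is a point $c\in I$ with $f(c)>0$. Since $f$ vanishes in the limit at both boundary points of $I$, I can pick $a,b\in I$ with $a<c<b$ and $f(a)<f(c)$ and $f(b)<f(c)$; here one has to be slightly careful if $\inf I$ or $\sup I$ belongs to $I$, but in that case $f$ takes the value $0$ there and one simply takes $a=\inf I$ or $b=\sup I$, while if the boundary point is not in $I$ the limit condition supplies an interior point with $f$ as small as desired. In either case $[a,b]\subseteq I$ is a compact interval with $\max_{[a,b]}f>\max\{f(a),f(b)\}$.

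Next, on the compact interval $[a,b]$ the continuous function $f$ attains its maximum at some interior point $x^{*}\in(a,b)$, with $f(x^{*})>0$. Now I would apply the standard Rolle/mean value argument to the absolutely continuous function $f$ separately on $[a,x^{*}]$ and on $[x^{*},b]$. On $[a,x^{*}]$ we have $f(x^{*})-f(a)>0$, so $\int_{a}^{x^{*}}f'=f(x^{*})-f(a)>0$ forces $f'(\xi)>0$ for $\xi$ in a subset of $[a,x^{*}]$ of positive measure; in particular there is such a $\xi<x^{*}$. Symmetrically, $\int_{x^{*}}^{b}f'=f(b)-f(x^{*})<0$ yields some $\eta\in(x^{*},b)$ with $f'(\eta)<0$. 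Then $\xi<x^{*}\le\eta$, actually $\xi<\eta$, and $f'(\xi)>0>f'(\eta)$, which is exactly the claim.

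The one subtlety — and the step I expect to require the most care — is the case distinction at the boundary: the hypothesis only says $f$ vanishes \emph{in the limit} at $\inf I$ and $\sup I$, so these points need not lie in $I$, and $I$ may be unbounded. The clean way to handle all cases uniformly is: choose any $c$ with $f(c)>0$, set $\delta\coloneqq f(c)/2>0$, and use $\lim_{x\downarrow\inf I}f(x)=0$ together with $\lim_{x\uparrow\sup I}f(x)=0$ to pick $a\in I$ with $a<c$ and $f(a)<\delta$ and $b\in I$ with $b>c$ and $f(b)<\delta$; this works whether or not the infimum/supremum is attained and whether or not $I$ is bounded, because the limit statement is vacuously strengthened to ``equals $0$'' at an attained endpoint. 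After that the argument on $[a,b]$ is routine: compactness of $[a,b]$ and continuity of $f$ (which follows from absolute continuity) give the interior maximizer, and absolute continuity lets one integrate $f'$ over the two halves to locate $\xi$ and $\eta$. No hypothesis beyond those stated is needed, and no result from earlier in the paper is invoked.
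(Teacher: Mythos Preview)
Your proof is correct and takes essentially the same approach as the paper: locate an interior maximizer, then integrate $f'$ on each side to extract $\xi$ and $\eta$ with the required signs. The only difference is cosmetic---the paper directly chooses a global maximizer $x_0$ for $f$ on $I$ (whose existence follows from the boundary-vanishing and continuity), while you first restrict to a compact subinterval $[a,b]$ before maximizing; your version is slightly more explicit about why the maximizer exists.
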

\begin{proof} We choose a maximizer $x_0$ for $f$. Then $x_0$ is not a boundary point of $I$,
and we have $\int_{x}^{x^{}_0} f'(t) \dd t =f(x_0)-f(x)>0$ for some $x<x_0$ 
sufficiently close to $\inf I$, and then $f'(\xi) >0$ for some $\xi \in\mathopen]x,x_0\mathclose[$. 
Similarly, $f'(\eta) < 0$ for some $\eta\in    \mathopen]x_0,x\mathclose[$ with some  $x>x_0$ 
close to $\sup I$.  
\end{proof}

%%%%%%%%%%%%%%%%%%%%%%%%%%%%%%%%%%%%%%%%%%%%%%%%%%%%%%%%%%%%%%%%%%%%%%%%%%%%%%%%%%%%%%%%%%%%%%%%%%%%% 2.6
\begin{Lem}                                                                \label{Lem:Ineq_S(f)_S(f')}
%%%%%%%%%%%%%%%%%%%%%%%%%%%%%%%%%%%%%%%%%%%%%%%%%%%%%%%%%%%%%%%%%%%%%%%%%%%%%%%%%%%%%%%%%%%%%%%%%%%%%%%%%%%
Let $I$ be a nondegenerate interval, $a=\inf I$, $b=\sup I$, $f:I\rightarrow\R$ be absolutely continuous, and let 
$f':I\rightarrow\R$ be almost everywhere a derivative of $f$. 

\smallskip{\rm\textbf{(a)}} If  $\lim_{t\rightarrow b-}f(t)=0$ and if $f'$ is essentially lastly positive, 
then so is $-f$.

\smallskip{\rm\textbf{(b)}} We have 
\la                                                     \label{Eq:Ineq_S(f)_S(f')}
    S^-(f) &\le &  S^-(f') +1 -\left(\lim_{x\rightarrow a+} f(x)=0 \right)-\left(\lim_{x\rightarrow b-} f(x)=0 \right)
\al 
except when $f=0$ and  $S^-(f')=0$. 

More precisely, if $S^-(f') =n\in\N_0$, then also $m\coloneqq S^-(f)$ is finite, and, if $f$ is not identically zero,
with  $y\in I^m_<$ and $z\in I^n_{\le}$ denoting any sign change tuples of $f$ and $f'$ respectively, 
and with 
\la                        \label{Eq:Def_J_0_..._m}
   J &\coloneqq& \left\{ j\in\{0,\ldots,m\} : 1\le j\le m-1,
        \text{ or }j=0\text{ and }\lim_{x\rightarrow a+} f(x)=0,\right. \\
   & &\phantom{\left\{ j\in\{0,\ldots,m\} :\right.} \left. 
  \text{ or }j=m\text{ and }\lim_{x\rightarrow b-} f(x)=0  \right\}  \nonumber
\al
and $y_0\coloneqq a$ and $y_{m+1}\coloneqq b$, for every $j\in J$, there
is a $k\in\{1,\ldots,n\}$ with $z_k\in\mathopen] y_j , y_{j+1}\mathclose[$.
\end{Lem}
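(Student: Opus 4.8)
The plan is to settle part~(a) by a short monotonicity argument and part~(b) by a Rolle-type argument resting on Lemma~\ref{Lem:Rolle}. For~(a): since $f'$ is essentially lastly positive, either $f'\ge 0$ Lebesgue-a.e.\ on $I$, or there is an $x_0\in I$ with $f'\ge 0$ a.e.\ on $[x_0,\infty[\cap I$ but not $f'=0$ a.e.\ there. In the first case $f$ is nondecreasing, so $\lim_{t\to b-}f(t)=0$ forces $f\le 0$ on $I$ and hence $-f\ge 0$ on $I$. In the second case $f$ is nondecreasing on $[x_0,\infty[\cap I$ with $\int f'>0$ over that interval, so $0-f(x_0)=\lim_{t\to b-}f(t)-f(x_0)>0$; thus $f(x_0)<0$ and $f\le 0$ on $[x_0,\infty[\cap I$, so $-f\ge 0$ there with $-f(x_0)>0$, i.e.\ $-f$ is essentially lastly positive.

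For~(b) the inequality is trivial when $S^-(f')=\infty$, so assume $S^-(f')=n\in\N_0$; if in addition $f\equiv 0$ then $S^-(f)=0$ while the right-hand side equals $n-1$, so the bound holds iff $n\ge 1$, the case $n=0$ being the stated exception. Assume henceforth $f\not\equiv 0$. Since $f$ is continuous, each sign change of $f$ is a zero of $f$, and between two consecutive sign changes $f$ keeps one (nonstrict) sign while being nonzero somewhere (as $f$ changes sign at both ends); likewise on $]a,y_1[$ resp.\ $]y_m,b[$ the function $f$ keeps one sign, is nonzero somewhere, and has limit $0$ at the outer endpoint exactly in the cases recorded by ``$j=0$'' resp.\ ``$j=m$'' in the definition of $J$. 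Fixing any sign change tuple $y_1<\ldots<y_m$ of $f$, putting $y_0:=a$ and $y_{m+1}:=b$, and, for $j\in J$, replacing $f$ by $-f$ on the open gap $]y_j,y_{j+1}[$ if $f\le 0$ there, an application of Lemma~\ref{Lem:Rolle} on that gap -- legitimate since $f$ then has limit $0$ at both its endpoints $y_j,y_{j+1}$ -- yields $\xi_j<\eta_j$ in it with $f'(\xi_j)>0>f'(\eta_j)$, so that for the original $f$ the pair $(f'(\xi_j),f'(\eta_j))$ has sign pattern $(+,-)$ when $f\ge 0$ on the gap and $(-,+)$ when $f\le 0$ there.

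It remains to assemble these local data. The set $J$ consists of consecutive integers -- it contains $\{1,\ldots,m-1\}$ and possibly the further values $0$ and $m$ -- so the gaps it indexes are consecutive, whence the sign of $f$ alternates from one gap to the next; therefore the $2|J|$ nonzero values of $f'$ at the increasing points $\xi_{j_1}<\eta_{j_1}<\xi_{j_2}<\eta_{j_2}<\ldots$ (where $J=\{j_1<\ldots<j_{|J|}\}$) follow the pattern $+,-,-,+,+,-,\ldots$, which has exactly $|J|$ sign changes. Hence $S^-(f')\ge|J|$, and, the gaps being pairwise disjoint, for each $j\in J$ the sign change(s) of $f'$ just produced lie in $]y_j,y_{j+1}[$ and occupy pairwise distinct positions in a sign change tuple $z$ of $f'$; this is the ``more precisely'' assertion. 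Since $|J|=(m-1)+\big(\lim_{x\to a+}f(x)=0\big)+\big(\lim_{x\to b-}f(x)=0\big)$ for $m\ge 1$ (the case $m=0$ being immediate from $S^-(f)=0$ together with Lemma~\ref{Lem:Rolle} applied on $]a,b[$ when both limits vanish), the displayed inequality follows. Finiteness of $m$ is obtained first by the same device: applying Lemma~\ref{Lem:Rolle} only on the $m-1$ internal gaps of an arbitrary sign change tuple already gives $S^-(f')\ge m-1$, so $m\le n+1<\infty$.

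I expect the main obstacle to be the bookkeeping in the last step: checking that the locally produced opposite-sign pairs of $f'$ concatenate into one global sign change tuple of length $|J|$. This rests on two structural facts -- that consecutive gaps in $J$ carry opposite signs of $f$, so the local $(+,-)$ and $(-,+)$ patterns dovetail without cancellation at the junctions, and that a sign change of $f$ is a genuine zero at which $f$ does not merely touch $0$ from within a single gap, so that Lemma~\ref{Lem:Rolle} is legitimately applicable with vanishing boundary values. A secondary point needing care is that at an end gap Lemma~\ref{Lem:Rolle} is available only when $f$ vanishes in the limit at the outer endpoint, which is exactly what the conditions ``$j=0$'' and ``$j=m$'' in the definition of $J$ guarantee.
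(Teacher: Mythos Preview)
Your approach is the same as the paper's: apply Lemma~\ref{Lem:Rolle} on each gap $\mathopen]y_j,y_{j+1}\mathclose[$ indexed by $J$. Two points, however, need tightening.

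First, your finiteness argument is circular as written. You say ``applying Lemma~\ref{Lem:Rolle} only on the $m-1$ internal gaps of an arbitrary sign change tuple already gives $S^-(f')\ge m-1$'', but a sign change tuple of $f$ exists only once $m=S^-(f)$ is known to be finite. The paper avoids this by a different mechanism: since $S^-(f')=n$, there is a sign change tuple $z$ of $f'$ with associated intervals $I_0,\ldots,I_n$ on each of which $f'$ has one sign, hence $f$ is monotone on each $I_j$, so $S^-(f|_{I_j})\le 1$, and then~\eqref{Eq:Sign_changes_and_restrictions} gives $m\le 2n+1<\infty$. Your Rolle-based idea can be repaired (work with $k+1$ points witnessing $S^-(f)\ge k$, find zeros of $f$ between consecutive pairs via the IVT, then isolate a subinterval of constant sign around each intermediate witness point before applying Rolle), but this is more work than the monotonicity route.

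Second, the ``more precisely'' assertion is stated for \emph{every} sign change tuple $z$ of $f'$, not just for one you construct. Your global concatenation of the local patterns $+,-,-,+,\ldots$ shows $S^-(f')\ge|J|$, and you then say the produced sign changes ``occupy pairwise distinct positions in a sign change tuple $z$ of $f'$''; but this is only an existence statement. What is needed is the easy extra step: Rolle yields $\xi_j,\eta_j\in\mathopen]y_j,y_{j+1}\mathclose[$ with $f'(\xi_j)f'(\eta_j)<0$, and since any sign change tuple $z$ of $f'$ partitions $I$ into intervals on each of which $f'$ keeps one sign, some $z_k$ must lie between $\xi_j$ and $\eta_j$, hence in $\mathopen]y_j,y_{j+1}\mathclose[$. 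The paper's proof is equally terse here, but it proves this local statement directly and then deduces~\eqref{Eq:Ineq_S(f)_S(f')} from the resulting injection $J\to\{1,\ldots,n\}$, bypassing your global bookkeeping entirely.
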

\begin{proof} (a) Obvious from $-f(x)=\lim_{y\rightarrow b-}(f(y)-f(x))=\lim_{y\rightarrow b-}\int_x^yf'(t)\dd t$ for $x\in I$.

(b) It suffices to prove the second claim since, under the stated conditions, 
it yields the existence of an injective function $k(\cdot): J\rightarrow \{1,\ldots,n\}$,
hence $\#J\le n$ and thus~\eqref{Eq:Ineq_S(f)_S(f')}, 
and since the remaining cases of $S^-(f')=\infty$ or $f=0$ are trivial.

So let $S^-(f') =n\in\N_0$, $f$ not identically zero, and $z\in I^n_{\le}$  a sign change tuple of $f'$.
With corresponding intervals $I_0,\ldots,I_n$ as above, we have, for each $j\in\{0,\ldots,n\}$,
either $f'\le 0$ on $I_j$ or $f'\ge 0$ on $I_j$, and hence $S^{-}(f|_{I_j})\le 1$, 
and hence $m\coloneqq S^-(f) \le 2n+1 <\infty$, by 
applying~\eqref{Eq:Sign_changes_and_restrictions} $n$ times.  
So let $y\in I^m_<$ be a sign change tuple of $f$, let $J$ be defined by~\eqref{Eq:Def_J_0_..._m},
$y_0\coloneqq a$, $y_{m+1}\coloneqq b$, and let $j\in J$. 
Applying Lemma~\ref{Lem:Rolle} to $f|_{]y_j,y_{j+1}[}$ or its negative 
yields  a $k$ as claimed.
\end{proof}

%%%%%%%%%%%%%%%%%%%%%%%%%%%%%%%%%%%%%%%%%%%%%%%%%%%%%%%%%%%%%%%%%%%%%%%%%%%%%%%%%%%%%%%%%%%%%%
\subsection{Partial sums of reciprocals of square roots}
As usual, the symbol $\zeta$ without any subscript denotes the Riemann zeta-function.
In particular, $\zeta(\frac12)$ is a negative number as indicated in~\eqref{Eq:zeta(1/2)} below,
see \texttt{oeis.org/A059750} in \cite{Sloane2010}.

%%%%%%%%%%%%%%%%%%%%%%%%%%%%%%%%%%%%%%%%%%%%%%%%%%%%%%%%%%%%%%%%%%%%%%%% Lemma 2.9
\begin{Lem}   \label{Lem:zeta(1/2)}
%%%%%%%%%%%%%%%%%%%%%%%%%%%%%%%%%%%%%%%%%%%%%%%%%%%%%%%%%%%%%%%%%%%%%%%%%%%%%%%%%
For $n\in \N=\{1,2,\ldots\}$, we have
\begin{equation}                            \label{Eq:zeta(1/2)}
 \sum_{k=1}^{n-1}\frac1{\sqrt k} -2 \sqrt{n} \,\ <  \,\ \zeta(\tfrac12)
       \,\ = \,\ -1.46035\ldots,
\end{equation}
with equality in the limit as $n\to\infty.$
\end{Lem}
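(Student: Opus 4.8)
The plan is to show that the sequence
$$ a_n \,\ \coloneqq\,\ \sum_{k=1}^{n-1} k^{-1/2} - 2\sqrt n , \qquad n\in\N , $$
is strictly increasing with $\lim_{n\to\infty} a_n = \zeta(\tfrac12)$; then $a_n < \zeta(\tfrac12)$ for every $n\in\N$, with equality in the limit, which is exactly~\eqref{Eq:zeta(1/2)}.

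First I would check strict monotonicity by a one-line computation: for $n\in\N$,
$$ a_{n+1}-a_n \,\ =\,\ \frac1{\sqrt n} - 2\bigl(\sqrt{n+1}-\sqrt n\bigr) \,\ =\,\ \frac1{\sqrt n} - \frac2{\sqrt{n+1}+\sqrt n} \,\ >\,\ \frac1{\sqrt n} - \frac2{2\sqrt n} \,\ =\,\ 0 , $$
the inequality using $\sqrt{n+1}+\sqrt n>2\sqrt n$. In exactly the same way, the companion sequence $b_n \coloneqq \sum_{k=1}^{n} k^{-1/2} - 2\sqrt n = a_n + n^{-1/2}$ satisfies $b_{n+1}-b_n = (n+1)^{-1/2} - 2(\sqrt{n+1}-\sqrt n) = (n+1)^{-1/2}-\tfrac2{\sqrt{n+1}+\sqrt n} < 0$, so $(b_n)$ is strictly decreasing; since $b_n - a_n = n^{-1/2}\to 0$, the two monotone sequences converge to a common limit $L$, and $a_n < L < b_n$ for all $n\in\N$.

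It remains to identify $L$ with $\zeta(\tfrac12)$, which is the only input not entirely elementary. This is the classical representation of the analytically continued Riemann zeta function: for $0<\Re s<1$ one has $\zeta(s) = \lim_{N\to\infty}\bigl(\sum_{k=1}^{N}k^{-s} - N^{1-s}/(1-s)\bigr)$; specializing to $s=\tfrac12$, where $N^{1-s}/(1-s)=2\sqrt N$, gives $L=\lim_{N\to\infty} b_N = \zeta(\tfrac12)$. If a self-contained derivation is preferred over citing this, the same conclusion follows by applying Euler--Maclaurin summation to $x\mapsto x^{-1/2}$ on $[1,N]$, which yields $b_N = -\tfrac32 + \tfrac12 N^{-1/2} - \tfrac12\int_1^N(\{x\}-\tfrac12)x^{-3/2}\dd x$, hence $L = -\tfrac32 - \tfrac12\int_1^\infty(\{x\}-\tfrac12)x^{-3/2}\dd x$, which one again recognizes as $\zeta(\tfrac12)$ via the same continuation formula.

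Putting the two parts together gives $a_n < L = \zeta(\tfrac12)$ for all $n\in\N$ and $a_n\uparrow\zeta(\tfrac12)$, so~\eqref{Eq:zeta(1/2)} holds with equality in the limit as $n\to\infty$; the decimal value $-1.46035\ldots$ is the one recorded in the cited reference. I do not expect any genuine obstacle here: the strict inequality, which is the real substance of the lemma in this context, drops out of the trivial telescoping estimate above, and the evaluation $L=\zeta(\tfrac12)$ is a textbook fact about the zeta function rather than anything special to this paper.
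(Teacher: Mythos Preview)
Your proof is correct and follows essentially the same approach as the paper: show that $(a_n)$ is strictly increasing and identify its limit as $\zeta(\tfrac12)$, then conclude. The only cosmetic differences are that the paper obtains $a_{n+1}-a_n>0$ via the tangent bound for the concave function $\sqrt{\cdot}$ rather than rationalizing $\sqrt{n+1}-\sqrt n$, and that it simply cites Hardy (and Wirths) for the limit rather than spelling out the continuation formula and the auxiliary sandwich with $b_n$; your additions are sound but not needed.
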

\begin{proof} Let $a_n$ denote the left hand side of the inequality in~\eqref{Eq:zeta(1/2)}.
Then $a_n-a_{n+1} = 2\sqrt{n}\left(\sqrt{1+\frac 1n} -\left(1+\frac1{2n} \right)\right)<0$
by the tangent bound at $1$  for the concave function $\sqrt{\cdot}$.  Hence the
sequence $(a_n)_{n\ge 1}$ is strictly  increasing. Since we have
$\lim_{n\rightarrow\infty} a_n = \zeta(1/2)$ by
\cite[p.~333, (13.10.7) with $s=\sigma=\frac12$]{Hardy1949}, or
see~\cite[p.~192, (4.1)]{Wirths2014}   % p. 6 in the preprint
for a more elementary proof, the inequality in~\eqref{Eq:zeta(1/2)} follows.
\end{proof}

%%%%%%%%%%%%%%%%%%%%%%%%%%%%%%%%%%%%%%%%%%%%%%%%%%%%%%%%%%%%%%%%%%%%%%%%%%%%%%%%%%%%%%%%%%%%%%  
%%%%%%%%%%%%%%%%%%%%%%%%%%%%%%%%%%%%%%%%%%%%%%%%%%%%%%%%%%%%%%%%%%%%%%%%%%%%%%%%%%%%%%%%%%%%%%%   Section 3
\section{On few-point reduction theorems}               \label{Sec:reduction-thms}
%%%%%%%%%%%%%%%%%%%%%%%%%%%%%%%%%%%%%%%%%%%%%%%%%%%%%%%%%%%%%%%%%%%%%%%%%%%%%%%%%%%%%%%%%%%%%%%
%%%%%%%%%%%%%%%%%%%%%%%%%%%%%%%%%%%%%%%%%%%%%%%%%%%%%%%%%%%%%%%%%%%%%%%%%%%%%%%%%%%%%%%%%%%%%%%
In this section, we recall some reduction theorems partially used below, with apparently  some novelty in 
part (b) of the first one. For Tyurin's Theorem~\ref{Thm:Tyurin_reduction}
we provide a proof perhaps more natural than the original one.

The term ``component'' below is meant in the usual topological sense
of ``maximal connected subset'',  here of a subset $M$ of $\R^k$.

%%%%%%%%%%%%%%%%%%%%%%%%%%%%%%%%%%%%%%%%%%%%%%%%%%%%%%%%%%%%%%%%%%%%%%%%%%%%%%%%%%%%      Theorem 3.1
\begin{Thm}[essentially Richter 1957, \cite{Richter}]              \label{Thm:Richter}
%%%%%%%%%%%%%%%%%%%%%%%%%%%%%%%%%%%%%%%%%%%%%%%%%%%%%%%%%%%%%%%%%%%%%%%%%%%%%%%%%%%%%%%%%%%%%%%
Let $P$ be a law on the measurable space $(\mathcal{X},\mathcal{A})$, let $k\in \N$, and let $f_1,\ldots,f_k$
be real-valued and $P$-integrable functions on $\mathcal{X}$.

{\rm\textbf{(a)}}
There exists a law $Q$ on $(\mathcal{X},\mathcal{A})$ concentrated in $k+1$ or fewer points
such that  $Pf_i =Qf_i$ holds for each $i\in\{1,\ldots,k\}$.

{\rm\textbf{(b)}} Assume in addition that $M\coloneqq\{ (f_1(x),\ldots, f_k(x)) : x\in \mathcal{X}\}$
has at most $k$ components. Then conclusion (a) holds with ``$k$ or fewer'' in place of ``$k+1$ or fewer''.
\end{Thm}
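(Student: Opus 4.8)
The plan is to deduce both parts from the classical description of the extreme points of a set of probability measures with finitely many linear moment constraints. First I would set up the right framework: consider the map $T\colon \mathcal{X}\to\R^k$, $x\mapsto (f_1(x),\ldots,f_k(x))$, and push $P$ forward to a law $\bar P = P\circ T^{-1}$ on $\R^k$ which is integrable for the $k$ coordinate functions (this is just $P$-integrability of the $f_i$). The quantities $Pf_i$ are exactly the coordinates of the barycentre $\mathbf b\coloneqq\int_{\R^k} y\,\dd\bar P(y)\in\R^k$. It suffices to produce a law $\bar Q$ on $\R^k$, supported on few points, with the same barycentre $\mathbf b$ and with each support point lying in $M=T(\mathcal X)$ (indeed in the support of $\bar P$); pulling back each atom through $T$ to some preimage point in $\mathcal X$ then gives the desired $Q$ on $\mathcal X$.

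For part (a) I would invoke the standard fact (Carath\'eodory's theorem applied to the convex hull of the support of $\bar P$, or equivalently the Bauer/Dubins characterization of extreme points of the moment-constrained set of measures): a point $\mathbf b$ in the convex hull of a set $M\subseteq\R^k$ is a convex combination of at most $k+1$ points of $M$. Since $\mathbf b=\int y\,\dd\bar P(y)$ lies in $\overline{\conv}\,M$, and in fact in $\conv M$ because $\bar P$ is concentrated on $M$ (a short argument: the barycentre of any probability measure on $M$ lies in $\conv M$, e.g.\ by a separation argument, since any closed halfspace containing $M$ contains $\mathbf b$), Carath\'eodory yields $\mathbf b=\sum_{j=1}^{k+1}\lambda_j y_j$ with $y_j\in M$, $\lambda_j\ge0$, $\sum\lambda_j=1$. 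Then $\bar Q\coloneqq\sum_j\lambda_j\delta_{y_j}$ works, and pulling back gives (a). The main thing to be careful about here is the measurability needed to choose preimages $x_j\in T^{-1}(\{y_j\})$ — but since each $y_j$ is an individual point of $M=T(\mathcal X)$, a preimage simply exists, no measurable selection is required.

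For part (b), under the extra hypothesis that $M$ has at most $k$ components, I would sharpen the Carath\'eodory bound to $k$. The key is a refinement of Carath\'eodory's theorem (due to Fenchel / Bunt, or provable directly): if $M\subseteq\R^k$ has at most $k$ connected components, then every point of $\conv M$ is a convex combination of at most $k$ points of $M$. The standard proof runs by contradiction: take a representation $\mathbf b=\sum_{j=1}^{k+1}\lambda_j y_j$ with all $\lambda_j>0$ and with $y_1,\ldots,y_{k+1}$ affinely dependent (which they must be, being $k+1$ points in $\R^k$) — so there are $\mu_j$, not all zero, with $\sum\mu_j y_j=0$, $\sum\mu_j=0$; the usual move decreases the number of atoms unless this fails, and the only obstruction is when no $\mu_j$ has the ``right'' sign to be cancelled, i.e.\ when the points are in ``convex position'' in a way that forces them to be spread across $\ge k+1$ components, contradicting the component hypothesis. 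I would cite Richter~\cite{Richter} (and perhaps Fenchel) for this combinatorial-topological refinement rather than reproving it in full.

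I expect the main obstacle to be precisely this last step: cleanly stating and justifying the component-counting refinement of Carath\'eodory's theorem, i.e.\ showing that $k+1$ points of $M$ needed in an irredundant representation of an interior-of-conv point must meet at least $k+1$ distinct components of $M$. The measure-theoretic wrapper (push-forward, barycentre in $\conv M$, pull-back of atoms) is routine; the geometry is where the real content of part (b) lies, and it is the part I would either take verbatim from Richter's paper or isolate as a separate purely geometric lemma about convex hulls of sets with bounded numbers of components.
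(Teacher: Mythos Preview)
Your approach is essentially identical to the paper's: show the barycentre lies in $\conv M$, apply Carath\'eodory for part~(a), and invoke the Fenchel--Bunt refinement (which the paper simply cites from Hiriart-Urruty and Lemar\'echal rather than re-deriving) for part~(b). One small point: your separation argument only shows $\mathbf b$ lies in the \emph{closed} convex hull of $M$, not $\conv M$ itself; the paper handles this by citing the multivariate Jensen inequality (Ferguson, Dudley) for the stronger statement that the integral actually lands in the convex hull.
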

\begin{proof} Let $F(x) \coloneqq (f_1(x),\ldots,f_k(x))$ for $x\in\mathcal{X}$,
so that $M$ as defined in \textbf{(b)} above is the image of the function~$F$,
and let~$C$ denote the convex hull of $M$. Then we have $y\coloneqq \int F \dd P \in C$,
by part of the multivariate Jensen inequality as in
\cite[p.~74, Lemma 3]{Ferguson}  or \cite[p.~348, Theorem 10.2.6]{Dudley}, noting that the measurability condition imposed on $C$
in the second reference is not used anywhere in the proof.

(a) By the Carath\'eodory theorem \cite[p.~29, Theorem~1.3.6]{Hiriart-Urruty_Lemarechal},
the point $y$ is a convex combination of
$k+1$ or fewer points in $M$, that is, there exist not necessarily distinct
$x_1,\ldots,x_{k+1}\in\mathcal{X}$ and $p_1,\ldots, p_{k+1}\in[0,1]$ with $\sum_{j=1}^{k+1} p_j=1$
and $y = \sum_{j=1}^{k+1} p_j F(x_j)$,
that is, $P f_i = Q f_i$ holds for $Q = \sum_{j=1}^{k+1} p_j \delta_{x_j}$ and each $i\in\{1,\ldots,k\}$.

(b) Under the additional hypothesis, the Fenchel-Bunt
refinement~\cite[p.~30, Theorem~1.3.7, see also pp.~245--246]{Hiriart-Urruty_Lemarechal}
of the Carath\'eodory theorem yields that  $y$ is a convex combination of $k$ or fewer points in $C$,
and we conclude as before.
\end{proof}

For $\mathcal{X}$ a Borel subset of $\R$, Theorem~\ref{Thm:Richter}(a)
is contained in \cite[p.~153, Satz~4]{Richter}.  For $\mathcal{X}$ an interval in $\R$
and for the special case of continuous $f_i$, in which case $M$ is connected,
Theorem~\ref{Thm:Richter}(b) is~\cite[p.~153, Satz~5]{Richter}, whereas  in our version and say in
case of $k\ge 3$, one of the functions $f_i$ could for example be an indicator of a subinterval of
$\mathcal{X}$, since then, assuming the remaining functions to be continuous,  
 $M$ would have at most three  components.
For a general measurable space $(\mathcal{X},\mathcal{A})$, Theorem~\ref{Thm:Richter}(a) is stated
in~\cite{Kemperman}, where also further references are given.

In the course of the proof of our main result below, Theorem~\ref{Thm:Richter}(a) allows us to
restrict attention to 5-point laws, which are still rather complex objects.
Using instead  Theorem~\ref{Thm:Richter}(b) would permit us
to consider only 4-point laws. However, the following generalization of a
result~\cite[p.~269, Theorem 2.1 with $n=1$]{Hoeffding1955}
of Hoeffding from 1955,
combined with Theorem~\ref{Thm:Richter}(a) and with the concavity
of the function $B$ from~\eqref{Eq:Def_B(rho)_new}, 
allows a reduction to 3-point laws, which turn out to be sufficiently tractable analytically.
Let us remark that using just Hoeffding's result would again only lead to a reduction to
4-point laws.

For the rest of this section all laws considered are finitely supported and are hence
for notational simplicity regarded as defined on the power set of the basic set $\mathcal{X}$.

%%%%%%%%%%%%%%%%%%%%%%%%%%%%%%%%%%%%%%%%%%%%%%%%%%%%%%%%%%%%%%%%%%%%%%%%%%%%%%%%%%% Theorem 3.2
\begin{Thm}[implicitly Hoeffding 1955, \cite{Hoeffding1955}]              \label{Thm:Extreme_point_representation}
Let $\mathcal{X}$ be a set, let $k\in \N$, and let $f_1,\ldots,f_k$ be real-valued functions on
$\mathcal{X}$. Then every finitely supported law $P$ on $\mathcal{X}$ is a finite convex combination
$\sum_{j=1}^n\lambda_j P_j$ of laws $P_j$ each concentrated on $k+1$ or fewer support points
of $P$ and satisfying $P_jf_i = Pf_i$ for each $i\in\{1,\ldots,k\}$.
\end{Thm}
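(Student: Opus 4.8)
The statement to prove is Theorem~\ref{Thm:Extreme_point_representation}: every finitely supported law $P$ on $\mathcal{X}$ is a finite convex combination of laws, each supported on at most $k+1$ of the support points of $P$, and each matching $P$ on the functionals $f_1,\ldots,f_k$. I would set $\mathcal{X}_0\coloneqq \mathrm{supp}(P) = \{x_1,\ldots,x_N\}$, so that without loss of generality we work on a finite set with $N$ points, and think of laws on $\mathcal{X}_0$ as points of the simplex $\Delta_{N-1}\subseteq\R^N$. The linear constraints ``$Qf_i = Pf_i$ for $i\in\{1,\ldots,k\}$'' together with ``$\sum_j Q(\{x_j\}) = 1$'' cut out an affine subspace, and intersecting with $\Delta_{N-1}$ gives a compact convex polytope $K$ containing $P$. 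The whole theorem is then the statement that $P$ is a convex combination of vertices of $K$, together with the observation that each vertex of $K$ has at most $k+1$ nonzero coordinates.

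\textbf{Key steps.} First I would verify that $K$ is nonempty (it contains $P$), compact (closed subset of the simplex), and convex (intersection of convex sets), hence by the Minkowski--Krein--Milman theorem in finite dimensions $K$ is the convex hull of its finitely many extreme points; thus $P = \sum_{j=1}^n \lambda_j P_j$ with each $P_j$ an extreme point of $K$ and $\lambda_j\ge 0$, $\sum\lambda_j = 1$. Second, and this is the crux, I would show every extreme point $Q$ of $K$ has $\#\{j : Q(\{x_j\})>0\}\le k+1$. The standard argument: let $S\coloneqq\{j: Q(\{x_j\})>0\}$ and suppose $\#S \ge k+2$. Consider the linear map $\R^S\to\R^{k+1}$ sending a vector $(t_j)_{j\in S}$ to $\big(\sum_{j\in S} t_j f_i(x_j)\big)_{i=1}^k$ together with $\sum_{j\in S} t_j$; since the source has dimension $\ge k+2 > k+1$, this map has a nontrivial kernel, so there is $t\neq 0$ with $\sum_{j\in S}t_j f_i(x_j)=0$ for all $i$ and $\sum_{j\in S}t_j = 0$. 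Extending $t$ by zero outside $S$ and using that $Q(\{x_j\})>0$ for $j\in S$, the perturbations $Q\pm\eps t$ lie in $K$ for small $\eps>0$ (they satisfy all the linear constraints and stay in the simplex), contradicting extremality of $Q$. Hence $\#S\le k+1$, and each $P_j$ is supported on at most $k+1$ of the $x_\ell$, which are support points of $P$; and $P_jf_i = Pf_i$ by construction since $P_j\in K$.

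\textbf{Main obstacle.} There is no serious obstacle; the only care needed is the bookkeeping in the extreme-point argument — making sure the perturbation $t$ is supported inside $S$ so that $Q\pm\eps t$ has nonnegative entries for small $\eps$, and that the dimension count gives strictly more unknowns than (homogeneous) equations. One could alternatively phrase the whole thing via Carath\'eodory applied to the image of the map $x\mapsto(f_1(x),\ldots,f_k(x))$ restricted to $\mathrm{supp}(P)$, but since we want the mixing components themselves to be laws (not just the mean) supported on few points of $\mathrm{supp}(P)$, the extreme-point formulation above is the cleanest; it also makes transparent why this is ``Hoeffding 1955'' — the $n=1$ case cited is exactly this polytope-vertex statement. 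I would also remark that the case $N\le k+1$ is trivial (take $n=1$, $P_1=P$), so we may assume $N\ge k+2$ when running the dimension argument.
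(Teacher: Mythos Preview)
Your proposal is correct and follows essentially the same route as the paper: reduce to the finite support set, form the compact convex polytope $K$ of laws matching the $k$ moment constraints, invoke Minkowski's theorem to write $P$ as a finite convex combination of extreme points of $K$, and then use the linear-algebra dimension count (kernel of the map into $\R^{k+1}$) to show that any extreme point with $\ge k+2$ support points can be perturbed within $K$, contradicting extremality. The paper's argument is identical in structure and detail.
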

\begin{proof} Replacing $\mathcal{X}$
by $\{x\in\mathcal{X} : P(\{x\}) >0\}$, we may assume that $\mathcal{X}$
is finite and is the set of all support points of $P$. Then
\[
 K &\coloneqq& \{ Q\in \Prob(\mathcal{X}) : Qf_i= Pf_i\text{ for }i\in\{1,\ldots,k\}\}
\]
is a convex and compact subset of the finite-dimensional vector space of all $\R$-valued measures
on $\mathcal{X}$, with $P\in K$. Hence, by Minkowski's theorem
\cite[p.~42, Theorem~2.3.4]{Hiriart-Urruty_Lemarechal}, $P$ is a finite convex combination
$\sum_{j=1}^n\lambda_j P_j$ of extreme points $P_j$ of $K$, and then
each $P_j$ is concentrated in at most $k+1$ points:

Indeed, suppose that $Q =\sum_{x\in\mathcal{X}}q_x \delta_x \in K$ is such that its set of support points
$\mathcal{X}_0:=\{x \in \mathcal{X} : q_x >0 \}$ contains at least $k+2$ elements. Then 
\[
  \left\{ r \in \R^{\mathcal{X}_0} : \sum_{x\in \mathcal{X}_0} r_x = 0,
     \sum_{x\in \mathcal{X}_0} r_x f_i(x) = 0 \text{ for }i\in\{1,\ldots,k\}
  \right\}
\]
is a subspace of dimension at least $1$ of $\R^{\mathcal{X}_0}$, hence contains a nonzero $r$,
so that we have
\[
  Q_{\pm} &\coloneqq& Q \pm \epsilon \sum_{x\in\mathcal{X}_0} r_x\delta_x
  \,\ \in \,\   K\setminus \{Q\}
\]
for some $\epsilon>0$, and $Q = \frac12(Q_++Q_-)$. Thus $Q$ is not an extreme point of $K$.
\end{proof}

%%%%%%%%%%%%%%%%%%%%%%%%%%%%%%%%%%%%%%%%%%%%%%%%%%%%%%%%%%%%%%%%%%%%%%%%%%%%%%%%%%%%%%%%%%%%% Theorem 3.3
\begin{Thm}[Tyurin 2009, \cite{Tyurin2009arxiv,Tyurin2009DAN,Tyurin2010TVP}]            \label{Thm:Tyurin_reduction}
%%%%%%%%%%%%%%%%%%%%%%%%%%%%%%%%%%%%%%%%%%%%%%%%%%%%%%%%%%%%%%%%%%%%%%%%%%%%%%%%%%%%%%%%%%%%%%%%%%%%%%%%%%%
Let $\mathcal{X}$ be a set,  $k\in \N$,   $f_1,\ldots,f_k$ real-valued functions
on $\mathcal{X}$, $c_1,\ldots, c_k \in \R$, and
\[
  \mathcal{P} &\coloneqq& \left\{ P \in \Prob(\mathcal{X}) :  \#\mathrm{supp\,} P <\infty,
       \,   Pf_i=c_i \text{ for }i\in\{1,\ldots,k\} \right\}.
\]
Let $F: \cP \rightarrow \overline{\R}$ be quasi-convex, that is, satisfying 
$F(\lambda P + (1-\lambda)Q) \le \max\{F(P),F(Q)\}$ for $P,Q \in \cP$ and $\lambda \in[0,1]$.  Then
\[
  \sup\{F(P) :  P\in\cP  \}
    &=&  \sup\{F(P) : P\in\cP, \,     \#\mathrm{supp\,} P \le k+1  \}.
\]
\end{Thm}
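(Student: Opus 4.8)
The plan is to deduce the theorem directly from Hoeffding's extreme-point representation, Theorem~\ref{Thm:Extreme_point_representation}, together with an elementary induction encoding quasi-convexity for finite convex combinations. The inequality ``$\ge$'' is trivial, since the supremum on the right is taken over a subset of $\cP$, so the whole work is in proving ``$\le$''.

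For ``$\le$'', I would fix an arbitrary $P\in\cP$ and apply Theorem~\ref{Thm:Extreme_point_representation} with the same $\mathcal{X}$, $k$, and $f_1,\ldots,f_k$: since $P$ is finitely supported, we obtain a representation $P=\sum_{j=1}^n\lambda_j P_j$ with $\lambda_j\ge 0$, $\sum_{j}\lambda_j=1$, where each $P_j$ is concentrated on $k+1$ or fewer of the support points of $P$ and satisfies $P_jf_i=Pf_i=c_i$ for $i\in\{1,\ldots,k\}$. In particular each $P_j$ is finitely supported with $\#\mathrm{supp\,}P_j\le k+1$, hence $P_j\in\cP$. Next I would record the standard fact that a quasi-convex $F$ satisfies $F\big(\sum_{j=1}^n\lambda_j P_j\big)\le\max_{1\le j\le n}F(P_j)$ for any finite convex combination of points of $\cP$: this follows by induction on $n$ from the two-point defining inequality, writing $\sum_{j=1}^n\lambda_j P_j=\lambda_1 P_1+(1-\lambda_1)\big(\sum_{j=2}^n\tfrac{\lambda_j}{1-\lambda_1}P_j\big)$ when $\lambda_1<1$ (the cases $n=1$ or $\lambda_1=1$ being trivial), and noting that the inner convex combination again lies in $\cP$ because $\cP$ is convex, the defining conditions ``finite support'' and ``$Pf_i=c_i$'' being preserved under convex combinations. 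Combining, for every $P\in\cP$ we get $F(P)\le\max_{j}F(P_j)\le\sup\{F(Q):Q\in\cP,\ \#\mathrm{supp\,}Q\le k+1\}$, and taking the supremum over $P\in\cP$ yields ``$\le$''.

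There is no real obstacle here; the only points requiring a word of care are the convexity of $\cP$ (needed so that the partial convex combinations in the induction stay in $\cP$, where quasi-convexity is assumed), which is immediate, and the fact that $F$ may take values in $\Rbar$, which is harmless since the maximum of finitely many extended reals is well defined and the inequalities above remain valid in $\Rbar$.
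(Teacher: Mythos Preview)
Your proof is correct and follows essentially the same approach as the paper's: apply Hoeffding's representation Theorem~\ref{Thm:Extreme_point_representation} and then extend the quasi-convexity inequality to finite convex combinations by induction. The paper compresses this into a single sentence, whereas you have helpfully spelled out the induction and the (immediate) convexity of $\cP$.
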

\begin{proof} Applying the representation $P = \sum_{j=1}^n\lambda_j P_j$ from Theorem~\ref{Thm:Extreme_point_representation}, 
and the quasi-convexity condition on $F$ extended by induction, immediately yields the claim.
\end{proof}

Let us finally mention \cite{Winkler,Pinelis_2006}  as starting points for some 
more sophisticated results related to this section.

%%%%%%%%%%%%%%%%%%%%%%%%%%%%%%%%%%%%%%%%%%%%%%%%%%%%%%%%%%%%%%%%%%%%%%%%%%%%%%%%%%%%%%%%%%%%%%%%%%
%%%%%%%%%%%%%%%%%%%%%%%%%%%%%%%%%%%%%%%%%%%%%%%%%%%%%%%%%%%%%%%%%%%%%%%%%%%%%%%%%%%%%%%%%%%%%%%%%  Section 4
\section{Auxiliary results for Zolotarev's $\zeta$-metrics}             \label{Sec:zeta-metrics}
%%%%%%%%%%%%%%%%%%%%%%%%%%%%%%%%%%%%%%%%%%%%%%%%%%%%%%%%%%%%%%%%%%%%%%%%%%%%%%%%%%%%%%%%%%%%%%%%
%%%%%%%%%%%%%%%%%%%%%%%%%%%%%%%%%%%%%%%%%%%%%%%%%%%%%%%%%%%%%%%%%%%%%%%%%%%%%%%%%%%%%%%%%%%%%%%%

%%%%%%%%%%%%%%%%%%%%%%%%%%%%%%%%%%%%%%%%%%%%%%%%%%%%%%%%%%%%%%%%%%%%%%%%%%%%%%%%%%%%%%%%%%%%%%%%%%%%%%%%%%%%%%%%%%%
\begin{proof}[Proof of Theorem~\ref{Thm:First_facts_on_zeta_distances}]
%%%%%%%%%%%%%%%%%%%%%%%%%%%%%%%%%%%%%%%%%%%%%%%%%%%%%%%%%%%%%%%%%%%%%%%%%%%%%%%%%%%%%%%%%%%%%%%%%%%%%%%%%%%%%%%%%%%%
(a) An obvious Hahn-Banach argument, as in Step~2 of the proof of Theorem~\ref{Thm:Main} in section~\ref{Sec:Main_proofs}
below.

\smallskip(b) Definiteness of $\zeta_s$, that is, the implication $\zeta_s(P,Q)=0 \Rightarrow P=Q$,
is of course very well-known, for example as a consequence of the uniqueness theorem for characteristic functions.
The remaining claims are obvious.

\smallskip(d)   Relation~\eqref{Eq:zeta_s_in_Prob_s} follows from Lemma~\ref{Lem:cF_s_properties}
using dominated convergence. Inequality~\eqref{Eq:Pf-Qf_vs_zeta(P,Q)} follows from~\eqref{Eq:zeta_s_in_Prob_s} 
using the linearity of expectations.

\smallskip(c) If $\zeta_s(P,Q)<\infty$, then we apply Lemma~\ref{Lem:cF_s_properties} 
to $f\coloneqq  \left(\prod_{j=0}^{m-1}(s-j)\right)^{-1} |\cdot|^s\in\cF_s$ 
to get $\infty > \zeta_s(P,Q)\ge |Pf_n -Qf_n|\rightarrow |Pf-Qf|$ 
using dominated convergence for $Qf_n$,  dominated convergence for $Pf_n$ in case of $Pf<\infty$,
and Fatou's Lemma for $Pf_n$ in case of $Pf=\infty$,
and we conclude that $Pf<\infty$, that is %$\nu_s(P)<\infty$
$P\in\Prob_s(\R)$; and for $j\in\{1,\ldots,m\}$ and $n\in\N$ then~\eqref{Eq:zeta_s_in_Prob_s}  
from part (d) applies to the monomial  $n(\cdot)^j\in\cF_s$, 
and letting $n\rightarrow\infty$ yields $\mu_j(P)=\mu_j(Q)$. 
If the second condition in~\eqref{Eq:Equivalences_zeta_s_finite} holds,
then the third follows easily using~\eqref{Eq:Taylor-formula}, compare \cite[pp.~102--103]{Senatov1998}.
Finally, the third condition in~\eqref{Eq:Equivalences_zeta_s_finite}
implies the first, in view of $\nu_s(P,Q)\le \nu_s(P)+\nu_s(Q)$.   
The remaining claims follow obviously.
\end{proof}

Let us  next  recall  two further well-known properties of $\zeta_s$, 
with $s\in \mathopen]0,\infty\mathclose[$ arbitrary, needed below. 
The first is its {\em regularity}
\la                                          \label{Eq:regularity_of_zeta}
   \zeta_s(P\ast R , Q\ast R) &\le & \zeta_s(P,Q)\quad\text{ for }P,Q,R\in\Prob(\R)    
\al
proved e.g.~in~\cite[p.~101]{Senatov1998}, which, given Theorem~\ref{Thm:First_facts_on_zeta_distances}(b), 
is equivalent to its {\em semiadditivity} 
\la                                      \label{Eq:semiadditivity_of_zeta}
   \zeta_s\big(\bigconv_{i=1}^n P_i , \bigconv_{i=1}^n Q_i \big) &\le& \sum_{i=1}^n\zeta_s(P_i,Q_i)
  \quad\text{ for } n\in \N \text{ and } P_i,Q_i \in\Prob(\R), 
\al
compare~\cite[p.~48]{Senatov1998}.  
To formulate the second, we use here, as well as later in some proofs, the obvious random variable notation 
$\zeta_s(X,Y) \coloneqq \zeta_s(P,Q)$ if $X,Y$ are $\R$-valued r.v.'s with $X\sim P$ and $Y\sim Q$.
Then we have the {\em homogeneity}
\la                                                    \label{Eq:homogeneity_of_zeta}
  \zeta_s(aX,aY)  &=&  a^s\zeta_s(X,Y)\quad\text{ for $a\in[0,\infty[$ and $\R$-valued r.v.'s $X$ and $Y$,}
\al 
the obvious proof of which being given in~\cite[p.~102]{Senatov1998}.

The following Lemma, which is presented in \cite[pp.~108-112]{Senatov1998} without explicit constants, 
allows us in the proof of Theorem~\ref{Thm:Main(noniid)},  
in a case where $aX\sim P$ and $aY\sim Q$ with  small~$a$, 
to use the homogeneity~\eqref{Eq:homogeneity_of_zeta} with a better exponent
than possible by just using~\eqref{Eq:regularity_of_zeta}.
We recall that $\mathrm{N}_\sigma$ denotes the centred normal law on $\R$ with variance $\sigma^2$.  
%%%%%%%%%%%%%%%%%%%%%%%%%%%%%%%%%%%%%%%%%%%%%%%%%%%%%%%%%%%%%%%%%%%%%%%%%%%%%%%%%%%%%%%%% Lemma 4.3
\begin{Lem}\label{LemZeta_s<=Zeta_s+l}
%%%%%%%%%%%%%%%%%%%%%%%%%%%%%%%%%%%%%%%%%%%%%%%%%%%%%%%%%%%%%%%%%%%%%%%%%%%%%%%%%%%%%%%%%%%%%%%%%%
Let $P,Q\in\Prob(\R)$ and  $s,t,\sigma\in\mathopen]0,\infty\mathclose[$.
Then we have 
\la                                                                              \label{Eq:zet_s_zeta_s+t}
 \zeta_s(P\ast\mathrm{N}_\sigma ,Q\ast\mathrm{N}_\sigma) &\le& C_{s,t} \frac{\zeta_{s+t}(P,Q)}{\sigma^t}
\al
with the finite constant $C_{s,t}$ defined as follows: Writing
\[
  s = \ell +\alpha,\quad t=m+\beta\quad \text{ with $\ell,m\in\N_0$ and $\alpha,\beta \in \mathopen]0,1 \mathclose]$}
\] 
and letting $\varphi$ denote the standard normal density, we put
\[
D_k &\coloneqq& \int|\varphi^{(k)}(x)|\dd x, \qquad 
D_{k,\alpha} \,\ \coloneqq \,\  \int |x|^\alpha\, |\varphi^{(k)}(x)|\dd x   
  \qquad\text{ for } k\in\N_0,    \\
C_{s,t} & \coloneqq&
\left\{
  \begin{array}{ll}
D_m^{\frac{1-\alpha-\beta}{1-\alpha}}\cdot D_{m+1,\alpha}^{\frac{\beta}{1-\alpha}} &\text{ if } \alpha+\beta\le1,\\[4mm]
D_{m+1}^{\frac{\alpha+\beta-1}\alpha}\cdot (2 D^{}_{m+1,\alpha})^{\frac{1-\beta}\alpha} &\text{ if } \alpha+\beta>1.
  \end{array}
\right.  
\]
In particular, if $t\in\N$, hence $m=t-1,$ $\beta=1,$ and $\alpha+\beta>1,$ then
$ C_{s,t} = D_{m+1} = D_t = \int|\varphi^{(t)}(x)|\dd x$, and the first few of these constants
can be explicitly computed, for example
\[
 C_{s,1} &=&  \int|\varphi'(x)|\dd x\,\ =\,\ \frac{2}{\sqrt{2\pi}}\, ,
 \qquad C_{s,2}  \,\ = \,\  \int|\varphi''(x)|\dd x\,\ =\,\ \frac{4}{\sqrt{2\pi\mathrm{e}}}\,.
\]
\end{Lem}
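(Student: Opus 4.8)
The target is the smoothing inequality~\eqref{Eq:zet_s_zeta_s+t}. The idea is to estimate $|R\ast P\, f - R\ast Q\, f|$ for $R = \mathrm{N}_\sigma$ and an arbitrary $f\in\cF_s^\infty$ by rewriting $R\ast P\, f - R\ast Q\, f = P\widetilde f - Q\widetilde f$, where $\widetilde f = f\ast \check{R}$ is the convolution of $f$ with (the reflection of) the normal density of $\mathrm{N}_\sigma$; then to bound $|P\widetilde f - Q\widetilde f|$ by $\|\widetilde f^{(\ell+m)}\|_{\mathrm L,\beta'}\,\zeta_{s+t}(P,Q)$ via~\eqref{Eq:Pf-Qf_vs_zeta(P,Q)} (Theorem~\ref{Thm:First_facts_on_zeta_distances}(d)), for the appropriate order $s+t = (\ell+m)+(\alpha+\beta-(\text{integer carry}))$. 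The whole problem thus reduces to controlling a suitable H\"older seminorm of $\widetilde f^{(\ell+m)}$ in terms of $\|f^{(\ell)}\|_{\mathrm L,\alpha}\le 1$.

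\textbf{Key steps.} First I would note that differentiation can be moved onto the Gaussian factor: for $k$ derivatives in excess of $\ell$, $\widetilde f^{(\ell+k)} = f^{(\ell)}\ast (\text{density of }\mathrm{N}_\sigma)^{(k)}$, and by Gaussian scaling $(\text{density of }\mathrm N_\sigma)^{(k)}(x) = \sigma^{-1-k}\varphi^{(k)}(x/\sigma)$. Hence $\int |(\text{density of }\mathrm N_\sigma)^{(k)}| = \sigma^{-k}D_k$ and $\int |x|^\alpha |(\text{density of }\mathrm N_\sigma)^{(k)}(x)|\dd x = \sigma^{\alpha-k}D_{k,\alpha}$, which is where the constants $D_k, D_{k,\alpha}$ and the power $\sigma^{-t}$ enter. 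Then I would split into the two cases. If $\alpha+\beta\le 1$, so $s+t = (\ell+m)+(\alpha+\beta)$ with $\alpha+\beta\in\mathopen]0,1]$, I write $\widetilde f^{(\ell+m)} = f^{(\ell)}\ast (\text{Gaussian density})^{(m)}$ and estimate its $(\alpha+\beta)$-H\"older constant by splitting the kernel mass: for $|h|$ small use the smoothness of $f^{(\ell)}$ (contributing a factor involving $D_{m+1,\alpha}$ from one more derivative and the $\alpha$-weight), for $|h|$ large use boundedness-type bounds (contributing $D_m$); optimizing the split over the threshold gives the stated exponents $\tfrac{1-\alpha-\beta}{1-\alpha}$ and $\tfrac{\beta}{1-\alpha}$ and the product form of $C_{s,t}$. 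If $\alpha+\beta>1$, then $s+t = (\ell+m+1)+(\alpha+\beta-1)$ and one more derivative lands on the Gaussian: I estimate the $(\alpha+\beta-1)$-H\"older constant of $\widetilde f^{(\ell+m+1)} = f^{(\ell)}\ast(\text{Gaussian density})^{(m+1)}$, again splitting the integral at a threshold and balancing a $D_{m+1}$-term against a $D_{m+1,\alpha}$-term (the factor $2$ in $2D_{m+1,\alpha}$ coming from a triangle-inequality step $|\varphi^{(m+1)}(x-h)|+|\varphi^{(m+1)}(x)|$-type bound), yielding the exponents $\tfrac{\alpha+\beta-1}{\alpha}$ and $\tfrac{1-\beta}{\alpha}$. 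Finiteness of all $D_k, D_{k,\alpha}$ is clear since $\varphi^{(k)}$ is a polynomial times $\varphi$. For the explicit values $C_{s,1}, C_{s,2}$: here $t\in\N$ forces $m=t-1$, $\beta=1$, $\alpha+\beta>1$, and the second-case formula collapses (the exponent $\tfrac{1-\beta}{\alpha}=0$) to $C_{s,t}=D_{m+1}=D_t=\int|\varphi^{(t)}|$; then $\int|\varphi'| = 2\varphi(0) = 2/\sqrt{2\pi}$ since $\varphi'$ changes sign once at $0$, and $\int|\varphi''| = 4\varphi(1) = 4/\sqrt{2\pi\mathrm e}$ since $\varphi''$ changes sign at $\pm1$ with $\varphi'$ vanishing at $0,\pm\infty$.

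\textbf{Main obstacle.} The routine part is the convolution/scaling bookkeeping; the genuinely delicate point is the case distinction $\alpha+\beta\lessgtr 1$ and getting exactly the claimed product-of-powers constants rather than merely \emph{some} finite constant. This requires setting up the H\"older-seminorm estimate as an optimization: bound the increment of $f^{(\ell)}\ast(\text{kernel})^{(m)}$ over a step $h$ by $\min$- or interpolation-type reasoning, introduce a free cutoff radius, and choose it to equalize the two resulting contributions; the exponents $\tfrac{1-\alpha-\beta}{1-\alpha}$ etc.\ are precisely the weights produced by that optimization. One must also be careful that $f\in\cF_s^\infty$ (bounded) makes all the convolutions and their derivatives well-defined and the interchange of $P,Q$-integration with the convolution legitimate, and that the final $\sup$ over $f\in\cF_s^\infty$ together with Theorem~\ref{Thm:First_facts_on_zeta_distances}(d) applied on the right-hand side (where $P,Q$ need not be in $\Prob_{s+t}$, but $\zeta_{s+t}(P,Q)=\infty$ makes the inequality trivial otherwise) delivers~\eqref{Eq:zet_s_zeta_s+t} in the stated form.
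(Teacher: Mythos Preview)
Your plan is correct and essentially matches the paper's proof: define $g=f\ast\varphi_\sigma$, move the extra $m$ or $m+1$ derivatives onto the Gaussian factor, and show that $\sigma^t g/C_{s,t}\in\cF_{s+t}^\infty$ by controlling the appropriate H\"older seminorm of $g^{(n)}$ in the two cases $n=\ell+m$ (if $\alpha+\beta\le1$) and $n=\ell+m+1$ (if $\alpha+\beta>1$). The only cosmetic difference is in the interpolation step: the paper obtains, for each case, two bounds valid for \emph{all} increments $h$ --- one of the form $|h|^\alpha\cdot D_{\bullet}/\sigma^{\bullet}$ from the $\alpha$-H\"older continuity of $f^{(\ell)}$, the other either $|h|\cdot\|g^{(n+1)}\|_\infty$ (first case) or the constant $2D_{m+1,\alpha}/\sigma^{m+1-\alpha}$ (second case) --- and then simply takes their geometric mean with the stated exponents $\tfrac{\beta}{1-\alpha},\tfrac{1-\alpha-\beta}{1-\alpha}$ respectively $\tfrac{\alpha+\beta-1}{\alpha},\tfrac{1-\beta}{\alpha}$; your ``optimize over a threshold in $|h|$'' description is an equivalent way to arrive at exactly the same product-of-powers constant, so nothing is lost or gained either way.
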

\begin{proof} We shall follow the outline of the reasoning employed 
in~\cite[Lemma 2.10.1]{Senatov1998}.             \label{PageRef:Senatov1998Citation}  
Let $\varphi^{}_\sigma(x)\coloneqq \sigma^{-1}\varphi(x/\sigma)$ for $x\in \R$.
Given any $f\in\cF^\infty_s$, and writing 
\la                                                        \label{Eq:Def_g_h_from_f}
 g(x)\,\ \coloneqq \,\ \int f(x+z)\varphi_\sigma(z)\dd z &\text{ and }&
  h(x) \,\ \coloneqq \,\ \frac{\sigma^t g(x)}{C_{s,t}} \quad \text{ for } x\in \R,
\al
it is sufficient to prove that $h\in \cF^\infty_{s+t}$, for then we would get
\[
 |(P\ast\mathrm{N}_\sigma)f-(Q\ast\mathrm{N}_\sigma)f| &=& |Pg-Qg|
  \,\ =\,\ \frac{C_{s,t}}{\sigma^t}|Ph-Qh|  \,\ \le \,\  \text{R.H.S.\eqref{Eq:zet_s_zeta_s+t}}
\]
as desired. So let $f\in\cF^\infty_s$ and let $g$ and $h$ be defined through~\eqref{Eq:Def_g_h_from_f}.
Then $h$ is obviously bounded, and, with 
\[
  n \,\coloneqq \,  \lceil s+t-1\rceil \, =\, \begin{Bmatrix} \ell+m \\ \ell+m+1 \end{Bmatrix}
 \text{ if } \alpha+\beta  \,\begin{Bmatrix} \le \\ > \end{Bmatrix}\, 1 
 &\text{ and }&  \gamma \, \coloneqq \, s+t-n \, \in\, \mathopen]0,1\mathclose], 
\]
it remains to prove that  we have
\la                                                  \label{g_n_incr-bound_to_prove}
  \left|g^{(n)}(x) - g^{(n)}(y) \right| &\le& \frac{C_{s,t}}{\sigma^t} |x-y|^\gamma \quad\text{ for }x,y\in\R.
\al

If $k\in\N_0$ with $k\ge\ell$, then we obtain, for $x,y\in\R$,  
\la                                                                 \label{Eq:g^(ell)_under_integral}
 g^{(\ell)}(x) &=& \int f^{(\ell)}(x+z)\varphi_\sigma(z)\dd z \,\ =\,\ \int f^{(\ell)}(z)\varphi^{}_\sigma(x-z)\dd z, \\
 g^{(k)}(x)   &=&   \int f^{(\ell)}(z)\varphi^{(k-\ell)}_\sigma(x-z)\dd z  
         \,\ =\,\ \int f^{(\ell)}(x-z)\varphi^{(k-\ell)}_\sigma(z)\dd z,                       \label{Eq:g_nth_deriva} \\
 \quad |g^{(k)}(x)\! -\!g^{(k)}(y)| &\le& \int\! \left| f^{(\ell)}(x\!-\!z)-f^{(\ell)}(y\!-\!z)\right| \left|\varphi^{(k-\ell)}_\sigma(z)\right| \dd z 
  \,\ \le \,\  |x\!-\!y|^\alpha \frac{D_{k-\ell}}{\sigma^{k-\ell}}  \label{g_n_first_incr-bound} 
\al
where, to justify differentiation under the integral, we may in~\eqref{Eq:g^(ell)_under_integral} 
apply the dominated convergence theorem successively using polynomial bounds
on the derivatives $f',\ldots,f^{(\ell)}$, compare~\eqref{Eq:Taylor-formula} and the ensuing line,
and we may treat~\eqref{Eq:g_nth_deriva} similarly, or remember it as a well-known special 
case of the differentiability of Laplace transforms, see for example~\cite[Example]{Mattner2001};
at the last step in~\eqref{g_n_first_incr-bound} we used $f\in\cF_s^{}$ and the change of 
variables $z\mapsto \sigma z$.  Specializing~\eqref{Eq:g_nth_deriva} to $k\coloneqq \ell+m+1$ 
and using at the first step below  $\int \varphi^{(m+1)}_\sigma(z)\dd z =0$ yields
\la                                                                         \label{Eq:g_derivative_bound}
 |g^{(\ell+m+1)}(x)| &=& \left| \int \left(f^{(\ell)}(x-z) - f^{(\ell)}(x)\right)\varphi^{(m+1)}_\sigma(z)\dd z \right| \\
  & \le & \int |z|^\alpha \left|\varphi^{(m+1)}_\sigma(z)\right| \dd z  \,\ = \,\ \frac{D_{m+1,\alpha}}{\sigma^{m+1-\alpha}}   \quad\text{  for }x\in\R.
   \nonumber
\al
 
Let us now first assume that we have $\alpha+\beta\le 1$, and hence $n=\ell+m$ and $\gamma=\alpha+\beta$. 
Then, using~\eqref{Eq:g_derivative_bound} at the second step below, we get 
\[
   \text{L.H.S.\eqref{g_n_incr-bound_to_prove}} &\le&  \|g^{(n+1)}\|^{}_\infty \cdot|x-y| 
  \,\ \le \,\ \frac{D_{m+1,\alpha}}{\sigma^{m+1-\alpha}}|x-y| \quad\text{ for }x,y\in\R,
\]
and taking a geometric mean of this bound and the one from~\eqref{g_n_first_incr-bound} with $k\coloneqq n$, 
with the exponents $u\coloneqq \beta/(1-\alpha)\in\mathopen]0,1\mathclose]$ and $1-u$, yields~\eqref{g_n_incr-bound_to_prove} in the present case.

Let us finally assume that we have $\alpha+\beta>1$, and hence $n=\ell+m+1$ and $\gamma=\alpha+\beta-1$. 
Then, applying below~\eqref{Eq:g_derivative_bound} to $x$ and to $y$, we get   
\[
  \text{L.H.S.\eqref{g_n_incr-bound_to_prove}} &\le& \frac{2 D_{m+1,\alpha}}{\sigma^{m+1-\alpha}}
   \quad\text{ for }x,y\in\R,
\]
and taking a geometric mean of this bound and the one from~\eqref{g_n_first_incr-bound} with $k\coloneqq n$,
with the exponents $v\coloneqq 
(1-\beta)/\alpha\in [0,1[$ and $1-v$,  yields~\eqref{g_n_incr-bound_to_prove} again.
\end{proof}

In Steps~6 and~7  of our proof of Theorem~\ref{Thm:Main}, we will use  Theorem~\ref{Thm:CrossingPoints}
stated below, which collects or refines results known from~\cite{Zolotarev1986},
\cite{DenuitLefevreShaked1998}, and~\cite{BoutsikasVaggelatou2002}. 
In particular, Theorem~\ref{Thm:CrossingPoints}(b) contains~\cite[Theorems~3.3 and~4.3]{DenuitLefevreShaked1998} 
and \cite[p.~353, first part of Theorem~2]{BoutsikasVaggelatou2002}, and adds a converse to the latter,
while Theorem~\ref{Thm:CrossingPoints}(c,d) seems to  be new. 

Let us first recall the definition of the $s$-convex order of laws on $\R$ in accordance 
with~\cite[p.~590]{DenuitLefevreShaked1998},\label{PageRef:DenuitLefevreShaked1998citation} \cite[p.\,351]{BoutsikasVaggelatou2002}, 
\cite[p.~39, Definition~1.6.2~a)]{MuellerStoyan2002}, and~\cite[p.~139]{ShakedShanthikumar2007}, 
but being here somewhat more explicit with respect to the appropriate integrability assumptions:
If  $s\in \N$,  then 
\la      \label{Eq:P_le_s-cx_Q}
   P &\le^{}_{s\text{-cx}}   & Q   
\al
is defined to mean that $P,Q\in\Prob_{s-1}(\R)$ and that $Pf \le Qf$ holds for every $s$-convex 
function $f:\R\rightarrow\R$ such that $Pf$ and $Qf$ are well-defined (possibly infinite).
Thus $\le^{}_{1\text{-cx}}$ is just the usual stochastic order $\le^{}_{\text{st}}$ on $\Prob(\R)$, 
$\le^{}_{2\text{-cx}}$ is the usual convex order   $\le^{}_{\text{cx}}$ on $\Prob_1(\R)$, 
and $\le^{}_{3\text{-cx}}$ is what we use below. By considering the $s$-convex function $\pm(\cdot)^k$ 
with $k\in \{1,\ldots,s-1\}$, it is clear that~\eqref{Eq:P_le_s-cx_Q} necessitates
\la                                                  \label{Eq:First_s-1_moments_equal}
  \mu_j(P) &=& \mu_j(Q) \,\, \in \,\ \R  \quad\text{ for }j\in \{1,\ldots,s-1\}.
\al

For $x\in\R$ and $\alpha\in[0,\infty[$, we agree to the standard notation 
$x_-^\alpha \coloneqq (x_-)^\alpha$ and  $x_+^\alpha\coloneqq (x_+)^\alpha$
if $\alpha>0$,  and $x_-^0\coloneqq (x\le 0)$ and  $x_+^0\coloneqq (x\ge 0)$, which is not in general 
the same as  $(x_-)^0$ and $(x_+)^0$ due to $0^0\coloneqq 1$. 
 For a law $P \in\Prob(\R)$, let $F$ and $\overline{F}$ denote its ordinary and ``upper'' 
distribution functions, %  is here understood to be left-continuous, 
that is, $F(x) \coloneqq P(\mathopen]-\infty,x\mathclose])$ and $\overline{F}(x) \coloneqq P([x,\infty[)$ for $x\in\R$, and we then define
$F_k(t)$ and $\overline{F}_k(t)$ for  $k\in\N$ and  $t\in\R$ inductively by $F_1\coloneqq F$, 
$\overline{F}_1\coloneqq \overline{F}$,  
\la                           \label{Eq:Fbar_k_recursion}
 F_{k+1}(t) \,\ \coloneqq \,\ \int_{-\infty}^t F_k(x)\dd x,
  && \overline{F}_{k+1}(t) \,\ \coloneqq \,\ \int_t^\infty \overline{F}_{k}(x) \dd x,
\al
and hence get, as follows by inserting the right hand sides from~\eqref{Eq:Fbar_k_integral} into  
the integrals in~\eqref{Eq:Fbar_k_recursion}  and using Fubini, 
\la                                        \label{Eq:Fbar_k_integral}
 F_{k}(t) \,\ = \,\ \int\frac{(x-t)_-^{k-1}}{(k-1)!}\dd P(x), 
  &&  \overline{F}_{k}(t) \,\ = \,\ \int\frac{(x-t)_+^{k-1}}{(k-1)!}\dd P(x).
\al 
By~\eqref{Eq:Fbar_k_integral},  the functions $F_k$ and  $\overline{F}_k$ are  finite-valued
in particular if $P\in\Prob_{k-1}(\R)$, and then~\eqref{Eq:Fbar_k_recursion}
with $k-1$ in place of $k$ yields 
\la                                                          \label{Eq:Fbar_k_limits}
  \lim_{t\rightarrow-\infty} F_{k}(t)=0, &&   \lim_{t\rightarrow\infty} \overline{F}_{k}(t)=0. 
\al

In Theorem~\ref{Thm:CrossingPoints}(a,d) below, symmetry of $P-Q$ is to be understood in the usual sense of 
$(P-Q)(B)=(P-Q)(-B)$ for every Borel set $B\subseteq\R$.
 
%%%%%%%%%%%%%%%%%%%%%%%%%%%%%%%%%%%%%%%%%%%%%%%%%%%%%%%%%%%%%%%%%%%%%%%%%%%%%%%%%%%%%%%%%%%%%   Theorem 4.1
\begin{Thm}[$\zeta$-distances, $s$-convex orderings, cut conditions]              \label{Thm:CrossingPoints}
%%%%%%%%%%%%%%%%%%%%%%%%%%%%%%%%%%%%%%%%%%%%%%%%%%%%%%%%%%%%%%%%%%%%%%%%%%%%%%%%%%%%%%%%%%%%%%%%%%%%%%%%%%%%
Let $s\in \N$ and let $P,Q\in\Prob_{s-1}(\R)$ satisfy the moment condition~\eqref{Eq:First_s-1_moments_equal}. 
Let further  $F,\overline{F},G,\overline{G}$ denote the respective ordinary and complementary distribution 
functions of $P,Q$ and, with $F_k, \overline{F}_k , G_k, \overline{G}_k$ as in~\eqref{Eq:Fbar_k_recursion}  
and~\eqref{Eq:Fbar_k_integral}, let $H_k \coloneqq G_k - F_k$ and $\overline{H}_k \coloneqq \overline{G}_k - \overline{F}_k$
for $k\in\{1,\ldots,s\}$. 

\smallskip{\rm\textbf{(a)}} For $k\in\{1,\ldots,s\}$ and $t\in\R$, we have 
\la                                                      
   (-1)^{k-1}H_k(t) + \overline{H}_k(t+) &=& 0,      \label{Eq:H_k_vs_overline{H}_k}      \\
    (-1)^{k-1}H_k(t-) + \overline{H}_k(t) &=& 0,       \label{Eq:H_k(t-)_vs_overline{H}_k(t)}
\al
and, if $P-Q$ is symmetric, then also
\la                                                               \label{Eq:overline{H}_k_odd_or_even}
   \overline{H}_k(-t) &=& (-1)^k  \overline{H}_k(t+)\,;   
\al
here the one-sided limit signs,  namely ``$+$'' in the argument of $\overline{H}_k$ 
in \eqref{Eq:H_k_vs_overline{H}_k} and~\eqref{Eq:overline{H}_k_odd_or_even},  
and~``$-$'' in the argument of ${H}_k$ in~\eqref{Eq:H_k(t-)_vs_overline{H}_k(t)}, can be omitted if $k\ge 2$.

Let  $I$ denote the smallest  interval satisfying $P(I)=Q(I)=1$.
Then, for each $k\in\{1,\ldots,s\}$, we have $\overline{H}_k=0$ on $\R\setminus I $ and
\la                                              \label{Eq:H_k-limits}
 \lim_{t\rightarrow-\infty}\overline{H}_k(t) &=& \lim_{t\rightarrow\infty}\overline{H}_k(t)\,\ =\,\ 0.
\al
If in addition $P,Q\in \Prob_{s}(\R)$, then  we have
\la                                              \label{Eq:zeta_as_integral}
    \zeta_s(P,Q) &=& \int |\overline{H}_s(x)|\dd x,
\al                                    
and a function $f\in\cF_s$ satisfies  
\la                                                \label{Eq:zeta_s_attained} 
    \zeta_s(P,Q) &=& Qf -Pf 
\al
iff its Lebesgue-a.e.~existing derivative of order $s$  satisfies 
\la                                                \label{Eq:Char_f_attains_zeta_s}        
  f^{(s)}(x) &=& \begin{Bmatrix} -1\\ 1 \end{Bmatrix} 
  \,\ \text{ if }\,\  \overline{H}_s(x)\,\ \begin{Bmatrix} < \\ > \end{Bmatrix} \,\ 0,   
  \quad\text{ for Lebesgue-a.e.~$x\in I$}.
\al

\smallskip{\rm\textbf{(b)}} For $k\in\{1,\ldots,s\}$, let $(B_k)$ denote the condition ``$\overline{H}_k$ has 
at most $s-k$ sign changes and is lastly positive''. Then we have 
the implications $(B_1) \Rightarrow (B_2) \Rightarrow \ldots \Rightarrow (B_s)
\Leftrightarrow \overline{H}_s \ge 0 \Leftrightarrow P \le^{}_{s-\mathrm{cx}}Q$.  
If in addition $P,Q\in\Prob_s(\R)$,  then $P \le^{}_{s-\mathrm{cx}}Q$ is further equivalent to   
$\zeta_s(P,Q) = \frac1{s!}(\mu_s(Q)-\mu_s(P))$, that is, to \eqref{Eq:zeta_s_attained}
holding for  the function $f\in\cF_s$ given by 
\la            \label{Eq:Cube_function}
   f(x) &\coloneqq& \tfrac1{s!} x^s \quad\text{ for }x\in\R.
\al

\smallskip{\rm\textbf{(c)}} For $k\in\{1,\ldots,s\}$, let $(C_k)$ denote the condition 
``$\overline{H}_k$ has exactly $s-k+1$ sign changes and is lastly positive''. Then we have 
the implications  $(C_1) \Rightarrow (C_2) \Rightarrow \ldots \Rightarrow (C_s)$.
If in addition $P,Q\in\Prob_s(\R)$,  then $(C_s)$ is further equivalent to~\eqref{Eq:zeta_s_attained}
holding,  with some sign change point $x_0$ of $\overline{H}_s$, for  the function $f\in\cF_s$ given by 
\la                                                   \label{Eq:|x-x_0|^}
  f(x) &\coloneqq& \tfrac1{s!}|x-x_0|^s \quad \text{ for }x\in\R,
\al
and this remains true if ``some'' is replaced by ``some and every''. 
Further, if $(C_k)$ holds for some $k\in\{1,\ldots,s-1\}$, then each sign change point of  $\overline{H}_s$
belongs to the interior of the convex hull of the entries of every sign change tuple of  $\overline{H}_k$.

\smallskip{\rm\textbf{(d)}} Assume that we have $P,Q\in\Prob_s(\R)$, $P-Q$ symmetric, and $\overline{H}_s$ with 
exactly one sign change. Then $s$ is odd, and~\eqref{Eq:zeta_s_attained} holds with 
$f(x) \coloneqq|x|^s/s!$ for $x\in\R$.
\end{Thm}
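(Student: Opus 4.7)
The plan is to first exploit the symmetry identity~\eqref{Eq:overline{H}_k_odd_or_even} to force $s$ to be odd, then to locate the unique sign change of $\overline{H}_s$ at $x_0 = 0$, and finally to reduce the claim to part~(c).

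Concretely, I apply the relation $\overline{H}_s(-t) = (-1)^s\,\overline{H}_s(t+)$ from part~(a). For $s \ge 2$ the function $\overline{H}_s$ is continuous (being an iterated integral of distribution functions, cf.~\eqref{Eq:Fbar_k_recursion}), so the one-sided limit may be dropped and $\overline{H}_s$ is essentially even when $s$ is even and essentially odd when $s$ is odd; for $s = 1$ the same dichotomy is to be read at the level of one-sided limits. If $s$ were even, the essential evenness would force any sign change of $\overline{H}_s$ at a point $t_0 \ne 0$ to be accompanied by a mirror sign change at $-t_0$, while a sign change at $0$ itself is ruled out, since the two relevant one-sided limits at $0$ would then coincide. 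Either alternative contradicts the hypothesis of a single sign change, so $s$ must be odd.

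For $s$ odd, the same mirror argument applied to an essentially odd $\overline{H}_s$ shows that any sign change at $t_0 \ne 0$ would produce a partner at $-t_0$ and, since oddness makes $\overline{H}_s(0-)$ and $\overline{H}_s(0+)$ have opposite signs, also a forced sign change at $0$ itself — giving at least three sign changes and again contradicting the hypothesis. Hence the unique sign change point of $\overline{H}_s$ must be $x_0 = 0$.

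Finally, possibly after interchanging the labels of $P$ and $Q$ (which preserves the symmetry of $P - Q$ and preserves $\zeta_s(P,Q)$ by Theorem~\ref{Thm:First_facts_on_zeta_distances}(b), while flipping the sign of $\overline{H}_s$), I may assume $\overline{H}_s$ is lastly positive. Condition $(C_s)$ of part~(c) then holds with sign change point $x_0 = 0$, and the characterization in part~(c) directly yields the desired identity $\zeta_s(P,Q) = Qf - Pf$ with $f(x) = |x - x_0|^s/s! = |x|^s/s!$. The main obstacle is the careful treatment of the $s = 1$ case, where $\overline{H}_1$ need not be continuous so that the parity/location argument must be carried out via one-sided limits; beyond this bookkeeping, the proof is essentially a clean reduction to part~(c).
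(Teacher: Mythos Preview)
Your approach is essentially correct and reaches the same conclusions, but by a different route than the paper. The paper first shows that $0$ is a sign change point of $\overline{H}_s$ via a direct case analysis (if $0$ were not a sign change point, then one of three natural conditions on the sign pattern of $\overline{H}_s$ on $\mathopen]-\infty,0\mathclose[$ and $\mathopen]0,\infty\mathclose[$ fails, each case yielding $S^-(\overline{H}_s)\neq 1$ via the symmetry relation~\eqref{Eq:overline{H}_k_odd_or_even}); it then applies part~(c) with $x_0=0$, and only afterwards deduces that $s$ is odd, by observing that for even $s$ the attaining $f=|\cdot|^s/s!$ coincides with $(\cdot)^s/s!$, so part~(b) would force $\overline{H}_s\ge 0$, contradicting the assumed sign change. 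You instead front-load the parity: you use~\eqref{Eq:overline{H}_k_odd_or_even} directly to see that $\overline{H}_s$ is essentially even for $s$ even, hence its sign changes come in mirror pairs and cannot number exactly one; then, knowing $s$ odd and $\overline{H}_s$ essentially odd, you locate the sign change at $0$. Your argument for the oddness of $s$ is more elementary in that it avoids the detour through the $s$-convex ordering equivalence of part~(b).

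Two small points deserve care. First, your sentence ``the unique sign change point of $\overline{H}_s$ must be $x_0=0$'' overstates slightly: an odd $\overline{H}_s$ could in principle vanish identically on some $\mathopen]-t_0,t_0\mathclose[$, in which case every point of $[-t_0,t_0]$ serves as a sign change point; what you actually need, and what your argument does give once this edge case is handled, is the paper's weaker claim that $0$ is \emph{a} sign change point (part~(c) then applies since it works for ``some and every'' such point). Second, your device of possibly swapping $P$ and $Q$ to ensure $\overline{H}_s$ is lastly positive yields, after unswapping, only $\zeta_s(P,Q)=|Qf-Pf|$ rather than the signed identity~\eqref{Eq:zeta_s_attained}. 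The paper's proof has the same lacuna: it asserts $(C_s)$ holds without verifying the ``lastly positive'' clause. In the paper's applications this is harmless because the sign is fixed externally (e.g.\ via $(C_1)$ in Example~\ref{Example:zeta_3(Q,N)_zeta_4(Q,N)}), so your treatment is on equal footing here.
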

\begin{proof}
(a) For every $t\in\R$,  \eqref{Eq:Fbar_k_integral} yields that 
\[
  (-1)^{k-1}F_k(t) + \overline{F}_k(t+) & =&  (-1)^{k-1}F_k(t-) + \overline{F}_k(t)  
  \,\ =\,\ \int \frac{(x-t)^{k-1}}{(k-1)!} \dd P(x)
\]
is a function of $\mu_1(P), \ldots,\mu_{k-1}(P)$, and $(-1)^{k-1}G_k(t) + \overline{G}_k(t+)$
is the same function of $\mu_1(Q), \ldots,\mu_{k-1}(Q)$; hence~\eqref{Eq:First_s-1_moments_equal}
yields~\eqref{Eq:H_k_vs_overline{H}_k} and~\eqref{Eq:H_k(t-)_vs_overline{H}_k(t)}. 
If now $P-Q$ is assumed to be symmetric, then, using this at the second step below, 
and using~\eqref{Eq:Fbar_k_integral} applied to $Q$ and to $P$ at the first and fourth steps,
and~\eqref{Eq:H_k_vs_overline{H}_k} at the fifth, 
we get~\eqref{Eq:overline{H}_k_odd_or_even} through
\[
 \overline{H}_k(-t)&=& \int\frac{(x+t)_+^{k-1}}{(k-1)!}\dd(Q-P)(x)
  \,\ =\,\ \int\frac{(-x+t)_+^{k-1}}{(k-1)!}\dd(Q-P)(x) \\
  &=& \int\frac{(x-t)_-^{k-1}}{(k-1)!}\dd(Q-P)(x) \,\ =\,\ H_k(t) \,\ =\,\ (-1)^k\overline{H}_k(t+).
\]

Back in the general case, since $(\cdot-t)_+^{k-1}$ is $(P+Q)$-a.e.~equal to 
a polynomial of degree $\le$ $k-1$ if $t\in \R\setminus I$, 
namely  $(P+Q)$-a.e.~$(\cdot-t)_+^{k-1}= (\cdot-t)_{}^{k-1}$ if $\{t\}<I$ and  $(\cdot-t)_+^{k-1}= 0$ if $\{t\}>I$,
we get $\overline{H}_k=0$ on $\R\setminus I$.
Claim \eqref{Eq:H_k-limits} follows using \eqref{Eq:Fbar_k_limits} and \eqref{Eq:H_k_vs_overline{H}_k}.

Assume now $P,Q\in\Prob_s(\R)$.  If $f\in\cF_s$, then the representation $f(x)=\sum_{j=0}^{s-1} \frac{f^{(j)}(0)}{j!}x^j+
\int_0^x\frac{(x - y )^{s-1}}{(s-1)!}f^{(s)}(y)\dd y
 = \sum_{j=0}^{s-1} \frac{f^{(j)}(0)}{j!}x^j+ \int_\R \big((0\le y<x)-(x\le y<0)   \big)
\frac{(x - y )^{s-1}}{(s-1)!}f^{(s)}(y)\dd y$
and a Fubini calculation, valid due to $\|f^{(s)}\|^{}_\infty \le 1$  and the moment assumption 
just introduced, and using~\eqref{Eq:H_k_vs_overline{H}_k} with $k=s$, yield the  formula 
\la                                         \label{Eq:Qf-Pf_via_H_s}
 Qf -Pf  &=& \int f^{(s)}(x) \overline{H}_s(x)\dd x.
\al
By applying~\eqref{Eq:Qf-Pf_via_H_s} to $f\in \cF_s^\infty$ and using $\|f^{(s)}\|^{}_\infty \le 1$ we get 
``$\le$'' in~\eqref{Eq:zeta_as_integral}. By applying~\eqref{Eq:Qf-Pf_via_H_s} to a function $f\in \cF_s$  
with $f^{(s)}(x)= \sign( \overline{H}_s(x))$ for Lebesgue-a.e.~$x$, and using Theorem~\ref{Thm:First_facts_on_zeta_distances}(d),
we get ``$\ge$'' in~\eqref{Eq:zeta_as_integral}. Finally, \eqref{Eq:zeta_as_integral}  and~\eqref{Eq:Qf-Pf_via_H_s} yield
the claim involving~\eqref{Eq:Char_f_attains_zeta_s}.

(b) Using~\eqref{Eq:H_k-limits}, the implications  $(B_1) \Rightarrow (B_2) \Rightarrow \ldots \Rightarrow (B_s)$ 
follow from Lemma~\ref{Lem:Ineq_S(f)_S(f')} up to the statement involving~\eqref{Eq:Ineq_S(f)_S(f')}, 
since~\eqref{Eq:Fbar_k_recursion} yields $\overline{H}_{k+1}'(t) = - \overline{H}_{k}(t)$ for 
$k\in \{1,\ldots,s-1\}$ and $t\in\R$, except for at most countably many $t$ in case of $k=1$. 
The equivalence  $(B_s)\Leftrightarrow \overline{H}_s\ge 0$ is trivial, 
and the equivalence $\overline{H}_s \ge 0 \Leftrightarrow P \le^{}_{s-\mathrm{cx}}Q$
is \cite[Theorem 3.2]{DenuitLefevreShaked1998}, using \eqref{Eq:Fbar_k_integral}.
Since~\eqref{Eq:Char_f_attains_zeta_s} holds for $f$ from~\eqref{Eq:Cube_function} 
iff $\overline{H}_s\ge 0$, using the left-continuity of~$\overline{H}_s$ and also $\overline{H}_s= 0$
on $\R\setminus I$ for the ``only if'' part, 
the final equivalence follows from part~(a). 

(c) Let $k\in\{1,\ldots,s-1\}$ and assume $(C_k)$. Then, as in the proof of part~(b),
we deduce that $\overline{H}_{k+1}$ has at most $s-k$ sign changes and is lastly positive.
If $\overline{H}_{k+1}$ even had at most $s-k-2 =(s-1)-(k+1)\in\N_0$ sign changes, then $k+1\le s-1$, 
and hence part~(b) applied  with $s-1$ in place of $s$ would yield $P \le^{}_{(s-1)-\mathrm{cx}}Q$
and hence $\zeta_{s-1}(P,Q)=\frac{1}{(s-1)!}(\mu_{s-1}(Q)-\mu_{s-1}(P))=0$
and thus $P=Q$ by Theorem~\ref{Thm:First_facts_on_zeta_distances}(b), in contradiction to $(C_k)$.
If $\overline{H}_{k+1}$ had exactly $s-k-1$ sign changes, then, on the one hand, part~(b)
as it stands would yield $\overline H_s\ge0$, but on the other hand, 
by~$(C_k)$, there would exist a $t_0\in\R$ such that the left-continuous function $(-1)^{s-k+1}\overline{H}_k$ 
would be  $\ge 0$ on $\mathopen]-\infty,t_0\mathclose]$ and actually $>0$ on some nondegenerate subinterval  $]t_1,t_0]$,
so that, in view of $\overline H_k(t+)=(-1)^kH_k(t)$ by~\eqref{Eq:H_k_vs_overline{H}_k}, the expression  
$(-1)^{s+1} H_k(t) =(-1)^{s-k+1}\overline H_k(t+)$ would 
be $\ge 0$ for $t\in\mathopen]-\infty,t_0\mathclose[$  and $>0$ for $t\in\mathopen[t_1,t_0\mathclose[$, and hence
$\overline{H}_s(t_0) = (-1)^{s}H_s(t_0-) <0$ by~\eqref{Eq:H_k(t-)_vs_overline{H}_k(t)}
and the recursion \eqref{Eq:Fbar_k_recursion},  a contradiction. Thus indeed $(C_{k+1})$ holds.

Let $x_0\in\R$ and $f$ be as in \eqref{Eq:|x-x_0|^}. Then $f^{(s)}(x)=\sign(x-x_0)$ for $x\in\R\setminus\{x_0\}$,
and hence~\eqref{Eq:Char_f_attains_zeta_s} holds iff $(x_0)$ is a sign change tuple for $\overline{H}_s$
and $\overline{H}_s$ is lastly positive. Hence the stated equivalence involving ``some'' and ``some and every''
follows using part~(a). 
  
The final claim of part (c) follows using the ``More precisely'' statement of Lemma~\ref{Lem:Ineq_S(f)_S(f')}.

(d) Suppose that $0$ were no sign change point of $\overline{H}_s\eqqcolon h$.
Then at least one of the following three conditions would be violated: 
(i)~$h(x)h(y)\ge 0$ for $x,y\in\mathopen]-\infty,0\mathclose[$,
(ii)~$h(x)h(y)\ge 0$ for $x,y\in\mathopen]0,\infty\mathclose[$,
(iii)~$h(x)h(y)<0$ for some $x<0<y$. If (i) or (ii) were false, that is, $h(x)h(y)<0$ for some $x,y\in I$
with $I=\mathopen]-\infty,0\mathclose[$ or  $I=\mathopen]0,\infty\mathclose[$, 
then~\eqref{Eq:overline{H}_k_odd_or_even} would yield $h(-x+)h(-y+) <0$,
and hence $h(u)h(v)<0$ for some $u,v\in -I$, leading to $S^{-}(h)\ge 2$, a contradiction. 
If~(i) and~(ii) were true but (iii) not, then $S^{-}(h)=0$, again  a contradiction. 

Thus $0$ is a sign change point of $\overline{H}_s$, and hence part~(c) yields, since condition $(C_s)$ is fulfilled,
that~\eqref{Eq:zeta_s_attained} holds with $f$ from~\eqref{Eq:|x-x_0|^} with $x_0=0$.  

Hence, if $s$ were even, then~\eqref{Eq:zeta_s_attained} would hold with $f$ from~\eqref{Eq:Cube_function}, 
but then by part~(b) we would have $(B_s)$, that is, $\overline{H}_s$ would have no sign changes, a contradiction.
Therefore $s$ is odd.
\end{proof}
 
From  the following example, which in particular computes $\epsilon_1$ from~\eqref{Eq:Def_epsilon_n},   
the results~\eqref{Eq:zeta_3(Q,N)} and~\eqref{Eq:zeta_4(Q,N)}  are used in the proofs of Theorems~\ref{Thm:Main(noniid)} 
and~\ref{Thm:epsilon_n} in section~\ref{Sec:zeta3-between-N-B} below. 
       
%%%%%%%%%%%%%%%%%%%%%%%%%%%%%%%%%%%%%%%%%%%%%%%%%%%%%%%%%%%%%%%%%%%%%%%%%%%%%%%% Example 4.2
\begin{Example}                                             \label{Example:zeta_3(Q,N)_zeta_4(Q,N)}
%%%%%%%%%%%%%%%%%%%%%%%%%%%%%%%%%%%%%%%%%%%%%%%%%%%%%%%%%%%%%%%%%%%%%%%%%%%%%%%%%%%%%%%%%%%%%%%%%
{\em Let $Q\coloneqq \frac12(\delta_{-1}+\delta_1)$. Then we have 
\la
  \epsilon_1 \,\ =\,\ \zeta_3(Q,\mathrm{N}) &=& \frac16\bigg( \frac{4}{\sqrt{2\pi}} %2\sqrt{\frac2\pi}
      -1\bigg) 
       \,\ < \,\ 0.0993,   \label{Eq:zeta_3(Q,N)}\\
   \zeta_4(Q,\mathrm{N}) &=& \frac1{12} \,\ <\,\ 0.0834, \label{Eq:zeta_4(Q,N)} \\
   \zeta_s(Q,\mathrm{N}) &=& \infty \quad\text{ for } s\in\mathopen]4,\infty\mathclose[. \label{Eq:zeta_>4(Q,N)}
\al}%                
\begin{proof} Claim~\eqref{Eq:zeta_>4(Q,N)} follows from Theorem~\ref{Thm:First_facts_on_zeta_distances}(c)
with $m\ge 4$, since $\mu_4(Q)=1\neq 3 =\mu_4(\mathrm{N})$. 

For proving \eqref{Eq:zeta_3(Q,N)} and~\eqref{Eq:zeta_4(Q,N)} using Theorem~\ref{Thm:CrossingPoints}, let us 
change here the notation and put for the rest of this proof
\[
    P &\coloneqq& \tfrac12(\delta_{-1}+\delta_1),\qquad Q\coloneqq \mathrm{N}.
\] 
Then, using from now on the notation of Theorem~\ref{Thm:CrossingPoints} with these $P,Q$, 
and first with $s\in\{1,2,3,4\}$ arbitrary, we have~\eqref{Eq:First_s-1_moments_equal}, 
and the function $\overline{H}_1=\overline{G}-\overline{F}$
obviously has the unique sign change tuple $(-1,0,1)$ and hence exactly three sign changes, and is lastly positive.

If now $s=4$, then assumption $(B_1)$ of  Theorem~\ref{Thm:CrossingPoints}(b) is fulfilled,
and, with $f(x)\coloneqq x^4/4!$ from~\eqref{Eq:Cube_function}, we  accordingly get
\[
  \zeta_4(P,Q) &=& Qf-Pf \,\ = \,\  \frac1{4!}(3-1) \,\ =\,\ \frac1{12}\,.
\] 

If, finally, $s=3$, then assumption $(C_1)$ of  Theorem~\ref{Thm:CrossingPoints}(c) is fulfilled,
hence so is $(C_3)$, and,  by symmetry of $P$ and of $Q$,   Theorem~\ref{Thm:CrossingPoints}(d) now yields  
\[
  \zeta_3(P,Q) &=& \frac1{3!} \left( Q|\cdot|^3 - P|\cdot|^3\right) \,\ = \,\  
   \frac16\bigg( \frac{4}{\sqrt{2\pi}} %2\sqrt{\frac2\pi}
    -1\bigg).
\]
\end{proof}
\end{Example}

%%%%%%%%%%%%%%%%%%%%%%%%%%%%%%%%%%%%%%%%%%%%%%%%%%%%%%%%%%%%%%%%%%%%%%%%%%%%%%%%%%%%%%%%%%%%%%
%%%%%%%%%%%%%%%%%%%%%%%%%%%%%%%%%%%%%%%%%%%%%%%%%%%%%%%%%%%%%%%%%%%%%%%%%%%%%%%%%%%%%%%%%%%%%%
\section{Proof of the main result}                \label{Sec:Main_proofs}
%%%%%%%%%%%%%%%%%%%%%%%%%%%%%%%%%%%%%%%%%%%%%%%%%%%%%%%%%%%%%%%%%%%%%%%%%%%%%%%%%%%%%%%%%%%%%%
%%%%%%%%%%%%%%%%%%%%%%%%%%%%%%%%%%%%%%%%%%%%%%%%%%%%%%%%%%%%%%%%%%%%%%%%%%%%%%%%%%%%%%%%%%%%%%
\begin{proof}[Proof of Theorem~\ref{Thm:Main}]

We will use random variable notation whenever this appears to be more convenient. So, in addition to the 
assumptions of Theorem~\ref{Thm:Main}, let $X_i\sim P_i$ and $Y_i\sim Q_i$ be $2n$ independent random variables
on some probability space with expectation operator $\E$. Without loss of generality, we assume the $P_i$ to be 
centred, that is,  $\E X_i =0$ for each~$i$.

\medskip
\textit{Step 1.} Equality in \eqref{Eq:Main_inequality_non-i.i.d.} occurs under the stated
conditions. Indeed, we then have  $ \widetilde{\bigconv_{i=1}^n Q_i}\, f =0$ by symmetry,
and thus
\[
 \text{L.H.S.\eqref{Eq:Main_inequality_non-i.i.d.}}
  &=& \left| \E\, c\left( \frac1{\sigma}\sum_{i=1}^n X_i \right)^3  \right|
  \,\ = \,\   \frac1{6\sigma^3}  \left| \sum_{i=1}^n  \E X_i^3 \right|     6 |c| \\
  &=&  \frac1{6\sigma^3} \sum_{i=1}^n  \sigma_i^3
    \left| \E  \left(\frac{X_i}{\sigma_i} \right)^3  \right|  \| f''\|^{}_{\mathrm{L}}
 \,\ = \,\ \text{R.H.S.\eqref{Eq:Main_inequality_non-i.i.d.}}
\]
by  using in the third step above the additivity of the third centred moment
for independent random variables, that is,  \eqref{Eq:Cumulants_additive}  with $\ell=3$, and in last step the equality statement in Example~\ref{Example:Thm_6_of_Irina_2014}, that is, a rather easy part of~\cite[Theorem~6]{Shevtsova2014JMAA}.

\medskip
\textit{Step 2.} We may assume that the Banach space $E$ is the real line $\R$,
with the norm being the usual modulus.   Indeed, assume Theorem~\ref{Thm:Main}
to be true in this special case. Then, for the given general $f$,
the Hahn-Banach theorem \cite[Theorem 5.20]{Rudin}
yields an $\R$-linear functional $\ell : E \rightarrow \R $   of norm $1$
satisfying  the first of the following equalities
\[
 \text{L.H.S.\eqref{Eq:Main_inequality_non-i.i.d.}}
  &=& \ell\left(\widetilde{\bigconv_{i=1}^n P_i}\ f-\widetilde{\bigconv_{i=1}^n Q_i}\ f\right)
  \,\ =\,\ \widetilde{\bigconv_{i=1}^n P_i} \ \ell\!\circ\! f
           -\widetilde{\bigconv_{i=1}^n Q_i}\ \ell\!\circ\! f,
\]
and thus an application of inequality~\eqref{Eq:Main_inequality_non-i.i.d.} to
$\ell\!\circ\! f$
in place of $f$ and using
$\| (\ell\!\circ\! f)'' \|^{}_{\mathrm{L}}
=  \|  \ell\!\circ\! f'' \|^{}_{\mathrm{L}} \le \|f'' \|^{}_{\mathrm{L}}$
yields inequality~\eqref{Eq:Main_inequality_non-i.i.d.} as stated (for example, in the particular case of $E=\C$ we may put 
$\ell(z):=\Re(cz),$ where $\Re$ stands for the real part and $c=c_f\in\C$ is such that $|c|=1$ and
$c\cdot\big(\widetilde{\bigconv_{i=1}^n P_i}\ f-\widetilde{\bigconv_{i=1}^n Q_i}\ f\big)$ is real and $\geq0$).

\medskip
\textit{Step 3.} It is enough to prove inequality~\eqref{Eq:Main_ineq_rewritten},
since we have $| P f -Qf | \le  \|f''\|^{}_{\mathrm{L}}\ \zeta_3(P,Q)$ for $P,Q \in\Prob_3(\R)$ 
and $f\in \cC^{2,1}(\R,\R)$  by~\eqref{Eq:Pf-Qf_vs_zeta(P,Q)} with $s\coloneqq3$, and in view of Steps~1 and~2.

\medskip
\textit{Step 4.}  It is enough to prove inequality~\eqref{Eq:Main_ineq_rewritten} in case
of $n=1$, since assuming this special case to be true yields the penultimate step below in
\[
 \text{L.H.S.\eqref{Eq:Main_ineq_rewritten}}
   &=&  \zeta_3\left(\frac1\sigma \sum_{i=1}^n X_i \, , \, \frac1\sigma \sum_{i=1}^n Y_i  \right)
  \,\ = \,\  \frac1{\sigma^3} \zeta_3\left(\sum_{i=1}^n X_i \, , \, \sum_{i=1}^n Y_i  \right)
  \,\ \le  \,\ \frac1{\sigma^3}  \sum_{i=1}^n  \zeta_3 ( X_i  ,  Y_i)\\
 & = &    \frac1{\sigma^3}  \sum_{i=1}^n  \sigma_i^3\zeta_3 ( \widetilde{X_i}  ,  \widetilde{Y_i} )
      \,\ \le \,\ \frac1{\sigma^3}  \sum_{i=1}^n \sigma_i^3  \frac{\beta_i A(\rho_i)  }{6 \sigma_i^3}
   \,\ =\,\ \text{R.H.S.\eqref{Eq:Main_ineq_rewritten}}, 
\]
where we have used the homogeneity~\eqref{Eq:homogeneity_of_zeta} at the second and fourth steps,  
and the semiadditivity~\eqref{Eq:semiadditivity_of_zeta}  at the third.

\medskip
\textit{Step 5.} Let us write for the rest of this proof
\la                                                                   \label{Eq:Def_Q_symm_two-point}
  Q &\coloneqq& \tfrac12(\delta_{-1}+\delta_1).
\al
By Step 4, it remains to prove that we have
\la                    \label{Eq:zeta_3(P,Q)-...le_0}
  \zeta_3(P,Q)  - \frac{B(\rho(P))}6 &\le& 0 
\al
for $P\in \widetilde{\cP_3}$ or, equivalently in view of the alternative representation~\eqref{Eq:zeta_s_in_Prob_s}
of $\zeta_3$, that
\la                                   \label{Eq:Pf-Qf-B(rho(p))/6_le_0}
  Pf -Qf  - \frac{B(\rho(P))}6 &\le& 0
\al
holds for $P\in\widetilde{\cP_3}$ and  $f\in\cC^{2,1}(\R,\R)$ with $\|f''\|_{\mathrm{L}}\le1$. Let
$f_1(x)\coloneqq x$, $f_2(x)\coloneqq x^2$, and $f_3(x) \coloneqq |x|^3$ for $x\in\R$.
Given now $P\in\widetilde{\cP_3}$ and $f\in\cC^{2,1}(\R,\R)$ with $\|f''\|_{\mathrm{L}}\le1$, we can apply Theorem~\ref{Thm:Richter}(a)
to $P$ and to the functions $f_1,f_2,f_3$, and $f_4 \coloneqq f$ to conclude,
since the left hand side of~\eqref{Eq:Pf-Qf-B(rho(p))/6_le_0} is a function of $Pf_3$ and $Pf_4$,
that  it is enough to prove~\eqref{Eq:Pf-Qf-B(rho(p))/6_le_0} under the additional assumption
that $P$ has at most $5$ support points. (Using instead of Theorem~\ref{Thm:Richter}(a) the a bit deeper
Theorem~\ref{Thm:Richter}(b), which applies by the continuity of the functions $f_i$ and the connectedness
of $\R$, we could reduce ``$5$'' above to ``$4$'', but this does not appear to help in what follows.)
Hence it is enough to prove~\eqref{Eq:zeta_3(P,Q)-...le_0} for $P\in\cP$ where
\[
  \cP &\coloneqq& \{ P \in \Prob(\R) : \#\mathrm{supp\,}P<\infty, Pf_1 =0, Pf_2 =1\}.
\]
Let $F(P)$ be the left hand side of~\eqref{Eq:zeta_3(P,Q)-...le_0}  for $P\in\cP$.
Then $F$ is a convex $\R$-valued functional on $\cP$, since $P\mapsto \rho(P) = P f_3$ is linear on $\cP$,
$B$ is concave by Lemma~\ref{Lem:A_and_B}, and $P\mapsto \zeta_3(P,Q)$ is convex since it is the supremum
of the affine functionals $P\mapsto Pf-Qf$ with $f\in\cC^{2,1}(\R,\R)$.
Hence Tyurin's Theorem~\ref{Thm:Tyurin_reduction}, with $k\coloneqq2$, shows that it is enough
to prove~\eqref{Eq:zeta_3(P,Q)-...le_0}  for $P$ standardized and having at most three support points.
So, for the remaining two steps, let 
\la                                                                 \label{Eq:Def_P_three-point}
 P &=& p\delta_\alpha + q\delta_\beta + (1-p-q)\delta_\gamma
\al
with some $\alpha\le \beta \le \gamma$, $p,q > 0$, $p+q<1$, $p\alpha+q\beta+(1-p-q)\gamma=0$, and $p\alpha^2+q\beta^2+(1-p-q)\gamma^2=1$. 
Let us further apply the notation $\overline{H}_k$ of Theorem~\ref{Thm:CrossingPoints} 
with $s\coloneqq 3$  to the present $P$ from \eqref{Eq:Def_P_three-point} and $Q$ 
from~\eqref{Eq:Def_Q_symm_two-point}. 
Then $\overline{H}_1$ has at most $5-2=3$ sign changes, since with $S\coloneqq \{\alpha,-1,\beta,1,\gamma\}$, 
only the elements of $S\setminus\{\min S,\max S\}$ can be sign changes.

\medskip
\textit{Step 6.} Assume in this step that $\overline{H}_1$ has at most two sign changes. Then, 
since $\overline{H}_1$ or $-\overline{H}_1$ is lastly positive, Theorem~\ref{Thm:CrossingPoints}(b) 
applied to $(P,Q)$ or to $(Q,P)$ yields the first equality in 
\[
  \zeta_3(P,Q) &=& \left|\int \frac{x^3}{6} \dd(P-Q)(x) \right|
    \,\ = \,\  \frac{1}{6}\left|\int x^3\dd P(x)  \right| \,\ \le\,\ \frac{B(\varrho(P))}{6}, 
\]
where the final inequality comes from~\cite[Theorem~6]{Shevtsova2014JMAA}, that is,
from~\eqref{Eq:Thm_6_of_Irina_2014} of Example~\ref{Example:Thm_6_of_Irina_2014}.

\smallskip
\textit{Step 7.} Assume finally that $\overline{H}_1$ has exactly three sign changes.
Then we have $\alpha<-1<\beta<1<\gamma$,  and the (unique) sign change tuple of $\overline{H}_1$ is $(-1,\beta,1)$, 
with the interior of the convex hull of its coordinates being $\mathopen]-1,1\mathclose[$.
Hence Theorem~\ref{Thm:CrossingPoints}(c), with $s=3$ and with the condition $(C_1)$ being fulfilled, 
yields the existence of an $r\in \mathopen]-1,1\mathclose[$ satisfying 
\la                                                               \label{Eq:zeta_3(P,Q)_explicit}
  \zeta_3(P,Q) &=& \frac16\left( \int|x-r|^3\dd P(x)-  \int|x-r|^3\dd Q(x)       \right).
\al 
If $r=0$, then $\text{R.H.S.\eqref{Eq:zeta_3(P,Q)_explicit}} = \frac16(\rho(P) -1) \le  \frac16B(\varrho(P))$, 
using Lemma~\ref{Lem:A_and_B}. 

So let now $r\neq 0$. Then there is a (unique) two-point law $P'\in \widetilde{\cP_3}$ 
with $\varrho(P') = \varrho(P)$ and concentrated in points $v\cdot\sgn(r)$ and $ -u\cdot\sgn(r)$ 
with certain $u>v>0$, compare the distribution of $X_\varrho$ in subsection~\ref{Subs:Intro} above.
Lemma~\ref{Lem:E|X-t|^3<=E(a+bX+cX^2+d|X|^3)} yields
\[
  \int |x-r|^3\dd P(x) &<& a_r(u,v)+c_r(u,v) + d_r(u,v)\varrho(P) \,\ =\,\   \int|x-r|^3\dd P'(x)
\]
using also standardizedness of $P,P'$ and $\varrho(P') = \varrho(P)$.
Hence, using also~\eqref{Eq:zeta_3(P,Q)_explicit} in the first step below, we get 
\[
 \zeta_3(P,Q) &<& \int\frac16|x-r|^3\dd(P'-Q)(x) \,\ \le \,\ \zeta_3(P',Q).
\]
Finally,  Step~6 applied to $P'$ in place of $P$, which is legitimate since the  
$\overline{H}_1$ corresponding to the two-point law $P'$ has at most two sign changes, yields 
$\zeta_3(P',Q) \le\frac16 B(\varrho(P')) = \frac16B(\varrho(P))$.
\end{proof}

%%%%%%%%%%%%%%%%%%%%%%%%%%%%%%%%%%%%%%%%%%%%%%%%%%%%%%%%%%%%%%%%%%%%%%%%%%%%%%%%%%%%%%%%%%    section 6
\section{Proofs involving $\zeta_3$-distances
between normal and convolutions of symmetric two-point laws}              \label{Sec:zeta3-between-N-B}
%%%%%%%%%%%%%%%%%%%%%%%%%%%%%%%%%%%%%%%%%%%%%%%%%%%%%%%%%%%%%%%%%%%%%%%%%%%%%%%%%%%%%%%%%%%%%%%%%%%%%%%
\begin{proof}[Proof of Theorem~\ref{Thm:Main(noniid)}]        
Inequality~\eqref{Eq:Main_normal(noniid)} follows from~\eqref{Eq:Main_ineq_rewritten} 
in Theorem~\ref{Rem:Main_thm_rewritten} by using the triangle inequality for $\zeta_3$ 
recalled in Theorem~\ref{Thm:First_facts_on_zeta_distances}(b). 
For the remaining claim, we assume without loss of generality that 
\la                 \label{Eq:sum_sigma_i^2=1}
  \sum_{i=1}^n\sigma_i^2&=&1.
\al
Let $Y,Y_1,\ldots,Y_n,Z,Z_1,\ldots,Z_n$ be independent r.v.'s with $Y\sim\frac12(\delta_{-1}+\delta_{1})$,
$Y_i\sim Q_i$ and hence $Y_i\sim \sigma_i Y$, $Z\sim \mathrm{N}$, and $Z_i\sim \mathrm{N}_{\sigma_i}$ 
and hence $Z_i\sim\sigma_i Z$, for $i\in\{1,\ldots,n\}$.
Let further  $T_k\coloneqq Z_1+\ldots+Z_k+Y_{k+1}+\ldots+Y_n$ for $k\in\{0,\ldots,n\}$. 
Then, using~\eqref{Eq:sum_sigma_i^2=1}, we get $T_0 \sim  \widetilde{\bigconv_{i=1}^nQ_i} =  \bigconv_{i=1}^nQ_i$ and 
$T_n \sim\mathrm{N}$ and hence, writing in this proof $\epsilon_n$ for a quantity more general  than 
the one introduced in~\eqref{Eq:Def_epsilon_n},  we get 
\[
 \eps_n &\coloneqq & \zeta_3\bigg( \widetilde{\bigconv_{i=1}^nQ_i},\,\mathrm{N} \bigg) 
  \,\ = \,\ \zeta_3(T_0,T_n) \,\ \le\,\  \zeta_3(T_0,T_1)+\sum_{k=1}^{n-1}\zeta_3(T_k,T_{k+1})
\]
by using the triangle inequality at the last step.

The regularity~\eqref{Eq:regularity_of_zeta} and the homogeneity~\eqref{Eq:homogeneity_of_zeta}
of $\zeta_3$ yield
$$
\zeta_3(T_0,T_1)\,\ \le\,\ \zeta_3(Y_1,Z_1)\,\ =\,\ \sigma_1^3\zeta_3(Y,Z). 
$$
Noting that the r.v.\ $Z_1+\ldots+Z_k$ occurring in $T_k$ and in  $T_{k+1}$ 
has the centred normal distribution with variance $\sum_{i=1}^k\sigma_i^2$   
and applying Lemma~\ref{LemZeta_s<=Zeta_s+l} with $s=3$ and $t=1,$ we get
$$
\zeta_3(T_k,T_{k+1})\, \le\,   \sqrt{\frac2\pi}\cdot\frac{\zeta_4(Y_{k+1},Z_{k+1})}{\sqrt{\sum_{i=1}^k\sigma_i^2}}  
 \, =\,  \sqrt{\frac2\pi}\cdot \frac{\zeta_4(Y,Z)\sigma_{k+1}^4}{\sqrt{\sum_{i=1}^k\sigma_i^2}}
 \quad \text{ for } k\in\{1,\ldots,n\!-\!1\},
$$
so that 
$$
 \eps_n\,\ \le\,\  \zeta_3(Y,Z)\sigma_1^3+ \sqrt{\frac2\pi}\zeta_4(Y,Z) \sum_{k=1}^{n-1}\frac{\sigma_{k+1}^4}{\sqrt{\sum_{i=1}^k\sigma_i^2}}.
$$
Using now the assumptions $\sigma_1\ge\sigma_2\ge\ldots\ge\sigma_n$ and~\eqref{Eq:sum_sigma_i^2=1}, we  have $\sigma_1^2+\ldots+\sigma_k^2\ge k/n$ and also $\sigma_1^2+\ldots+\sigma_k^2\geq k\sigma_{k+1}^2$, which yields 
\la                                          \label{Eq:eps_n_general_upper_bound}
 \eps_n
&\le& \zeta_3(Y,Z)\sigma_1^3+ \sqrt{\frac2\pi}\zeta_4(Y,Z) \sum_{k=1}^{n-1}\frac{\sigma_{k+1}^3\min\{1,\sqrt{n}\sigma_{k+1}\}}{\sqrt{k}}.
\al
Inserting now the values for  $\zeta_3(Y,Z)$ and $\zeta_4(Y,Z$) from \eqref{Eq:zeta_3(Q,N)} 
and~\eqref{Eq:zeta_4(Q,N)}  in Example~\ref{Example:zeta_3(Q,N)_zeta_4(Q,N)} yields the claim.
\end{proof}

%%%%%%%%%%%%%%%%%%%%%%%%%%%%%%%%%%%%%%%%%%%%%%%%%%%%%%%%%%%%%%%%%%%%%%%%%%%%%%%%%%%%%%%%%%%%%%%%%%%%%%%%%%
\begin{proof}[Proof of Theorem~\ref{Thm:epsilon_n}] \label{Page:proof-of-Thm:epsilon_n}
%%%%%%%%%%%%%%%%%%%%%%%%%%%%%%%%%%%%%%%%%%%%%%%%%%%%%%%%%%%%%%%%%%%%%%%%%%%%%%%%%%%%%%%%%%%%%%%%%%%%%%%%%%%
For the upper bound we observe that formula~\eqref{Eq:eps_n_general_upper_bound},
specialized to the homoscedastic case $\sigma_1=\ldots=\sigma_n=1/\sqrt{n}$,  yields 
\[
\eps_n &=& \zeta_3\left(\widetilde{\mathrm{B}_{n,\frac12}} , \mathrm{N}\right) \,\ \le\,\ \frac{\zeta_3(Y,Z)}{n^{3/2}}+ \sqrt{\frac2\pi}
\cdot\frac{\zeta_4(Y,Z)}{n^{3/2}}\sum_{k=1}^{n-1}\frac{1}{\sqrt{k}},
\]
which can further be simplified  by use of Lemma~\ref{Lem:zeta(1/2)} to give 
\[
  \eps_n & < & 2\sqrt{\frac2\pi} \cdot\frac{\zeta_4(Y,Z)}{n} + \frac{\zeta_3(Y,Z) + \zeta(\frac12)\sqrt{\frac2\pi}\zeta_4(Y,Z)}{n^{3/2}},
\]
and now the claimed upper bound for $\epsilon_n$ follows if we substitute the explicit values   
of $\zeta_3(Y,Z)$ and $\zeta_4(Y,Z)$ as in the preceding proof. 

For the lower bound, let us recall for  $n,k\in\N_0$ the $k$th Krawtchouk polynomial $P_k^n$ associated
to the symmetric binomial law $\mathrm{B}_{n,\frac12}$ as defined 
in~\cite[pp.~130, 151--154, the case of $q=2$ and hence $\gamma=1$]{MacWilliams_Sloane}
and also, with the unnecessary restriction $k\le n$,
in~\cite[section 6.2 on p.~298, the special case of $p=\frac12$
and hence $\gamma =1$]{DiaconisZabell}, that is,
\[
 P^n_k(x) &\coloneqq& \sum_{j=0}^k(-1)^j\binom{x}{j}\binom{n-x}{k-j}
 \quad\text{ for } x\in\R,
\]
so that we have in particular
\[
 &&P_0^n(x) \,\ = \,\ 1,\qquad P^n_1(x) \,\ = \,\ -2\left(x - \tfrac{n}2\right)
\]
and the recursion
\[
 (k+1)P^n_{k+1}(x)  &=& (n-2x)P_k^n(x) - (n-k+1)P^n_{k-1}(x)
\quad\text{ for }k\in\{1,\ldots,n-1\}
\]
and hence further
\[
  P_2^n(x) &=& 2 \left(\left(x-\tfrac{n}2\right)^2-\tfrac{n}4\right)\,,
     \\
 P_3^n(x) & =& -\tfrac43\left(x-\tfrac{n}{2} \right)^3
                +\left( n -\tfrac23\right)\left(x-\tfrac{n}2 \right).
\]
If now $n,k\in \N$, then, from the cited sources, we have for   $a\in\N_0$
\la                      \label{Eq:Closed_summation}
 \sum_{x=0}^a P^n_k(x)\mathrm{b}_{n,\frac12}(x)
  &=& \tfrac{n-a}k  P^{n-1}_{k-1}(a)\mathrm{b}_{n,\frac12}(a),
\al
and hence in particular
\[
  \sum_{x=0}^a P^n_1(x) \mathrm{b}_{n,\frac12}(x)
    &=& (n-a)\mathrm{b}_{n,\frac12}(a)  ,\\
 \sum_{x=0}^a   P^n_3(x)  \mathrm{b}_{n,\frac12}(a)
 &=& \tfrac23 (n-a)\left(\left( a- \tfrac{n-1}{2}\right)^2-\tfrac{n-1}4 \right)\mathrm{b}_{n,\frac12}(a)
\]
and thus
\[
\sum_{x=0}^a \left(x-\tfrac{n}2\right)\mathrm{b}_{n,\frac12}(x)
   &=& \tfrac{a-n}2 \mathrm{b}_{n,\frac12}(a),  \\
\sum_{x=0}^a \left(x-\tfrac{n}2\right)^3\mathrm{b}_{n,\frac12}(x)
  &=&   \sum_{x=0}^a \left( -\tfrac34 P_3^n(x) - \left(\tfrac3{8}n-\tfrac14\right) P_1^n(x)
    \right) \mathrm{b}_{n,\frac12}(x) \\
  &=& \tfrac{a-n}2
   \left( \left(a-\tfrac{n-1}{2}\right)^2 + \tfrac12 n -\tfrac14\right) \mathrm{b}_{n,\frac12}(a)   ,
\]
and finally
\la                                                                           \label{Eq:Cent_3rd_abs_mom_sym_bin}
 \sum_{x=0}^n\left|x-\tfrac{n}2\right|^3\mathrm{b}_{n,\frac12}(x)
   &=& -2\sum_{x=0}^{\lfloor\frac n2\rfloor}\left(x-\tfrac{n}2 \right)^3\mathrm{b}_{n,\frac12}(x)\\ \nonumber
   &=& \left\{ \begin{array}{ll}                                              
           \tfrac{1}{4}n^2\mathrm{b}_{n,\frac12}(\tfrac{n}{2}) &\text{ if $n$ is even}, \\
           \left( \tfrac{1}{4}n^2 +\tfrac1{8}n-\tfrac18 \right)
                   \mathrm{b}_{n,\frac12}(\lfloor\tfrac{n}{2}\rfloor)     &\text{ if $n$ is odd}.
        \end{array}\right.
\al
Recalling the local Edgeworth expansion for binomial laws (see, e.g.,~\cite[\S\,51, Theorem\,1]{GnedenkoKolmogorov1949})
\[
 \sqrt{\tfrac{n}4} \mathrm{b}^{}_{n,\frac12}(k)
   &=& \Phi'(z)  \left( 1 - \frac{z^4-6z^2+3}{12n} \right)    +O(n^{-2})
\]
uniformly in $z \coloneqq (k-\frac{n}2)/\sqrt{\frac{n}4}$ with $k\in\Z$, we thus get,
writing $\alpha_n \coloneqq \frac{n}{2} -  \lfloor \frac{n}{2}\rfloor$,
and using at the last step below $2\alpha_n^2 =\alpha_n$,
\[
 \epsilon^{}_n    &\ge& \left| \int \frac{|\cdot|^3}6
    \dd\left(\mathrm{N} - \widetilde{\mathrm{B}_{n,\frac12}} \right) \right|  \\
  &=& \frac16 \left| \frac{4}{\sqrt{2\pi}}   - \frac{2^3}{n^{3/2}}  \sum_{x=0}^n\left|x-\tfrac{n}2\right|^3\mathrm{b}_{n,\frac12}(x)
    \right| \\
  &=& \frac16 \left|\frac{4}{\sqrt{2\pi}}  -\frac{2^3}{n^{3/2}}  \text{R.H.S.\eqref{Eq:Cent_3rd_abs_mom_sym_bin}}   \right|  \\
 &=& \frac16\left| \frac{4}{\sqrt{2\pi}}  -\frac{2^3}{n^{3/2}}\left(\tfrac{n^2}4  +\alpha_n\tfrac{n}4 +O(1)\right)
   \sqrt{\tfrac{4}n} \left( \Phi'\left( \frac{-\alpha_n}{\sqrt{\tfrac{n}{4}}}   \right)
     \left(1 - \tfrac{ 3 + O(n^{-1})}{12n}\right) + O(n^{-2})  \right)
           \right| \\
 &=& \frac{4}{6\sqrt{2\pi}} \left| 1- \left(1+\tfrac{\alpha_n}{n}\right)
     \left( 1- \tfrac{ 2\alpha_n^2}{n}\right)\left( 1- \tfrac{1}{4n}\right)  + O(n^{-2})     \right| \\
 &=& \frac1{6\sqrt{2\pi}\,n} + O\left(\tfrac1{n^2}\right).
\]
\end{proof}

\begin{proof}[Proof of Theorem~\ref{Thm:Normal_appr}]
\label{Page:proof-of-Thm:Normal_appr}
Inequality~\eqref{Eq:Main_normal} results from  \eqref{Eq:Main_normal(noniid)} in Theorem~\ref{Thm:Main(noniid)},
already proved above, when specialized to the i.i.d.~case.
Alternatively, we may first specialize Theorem~\ref{Rem:Main_thm_rewritten} to the i.i.d.~case
and then apply the triangle inequality similarly to~\eqref{Eq:Final_display} below. 

Let now $\rho\in[1,\infty[$ and $f(x)=x^3/6$ for $x\in\R$. Then we have 
\[
 \sqrt{k}\left|\widetilde{P_\rho^{\ast k}}f-\mathrm{N}f \right|
   &=&  \sqrt{k}\left|\widetilde{P_\rho^{\ast k}}f  \right| 
   \,\ =\,\ \frac1k\left| P_\rho^{\ast k} f  \right| 
   \,\ =\,\ \frac{B(\rho)}{6} \quad\text{ for }k\in\N
\]
by using in the last step above \eqref{Eq:Cumulants_additive}  with $\ell=3$,
as we did in Step~1 of the proof of Theorem~\ref{Thm:Main}, and hence we get 
\[
 \frac{B(\rho)}{6} &\le& \varliminf_{n\rightarrow\infty} \sqrt{n}\zeta_3\left(\widetilde{P_\rho^{\ast n}} ,\mathrm{N} \right)
   \,\ \le \,\ \varlimsup_{n\rightarrow\infty} \sqrt{n}\zeta_3\left(\widetilde{P_\rho^{\ast n}} ,\mathrm{N} \right)
 \,\ \le \,\ \frac{B(\rho)}{6}
\]
using in the last step~\eqref{Eq:Main_normal} with $P=P_\rho$ and $\epsilon_n=O(n^{-1})$. 
This proves~\eqref{Eq:B-E_normal_opt_for_n_large}. 

Let finally $n\in\N$. For $P\in\cP_3$ using the triangle inequality for $\zeta_3$ in the first step below and  the i.i.d.~case of Theorem~\ref{Rem:Main_thm_rewritten} in the second we then have
\la                                                                \label{Eq:Final_display}
  \left|\zeta_3\left(\widetilde{P^{\ast n}},\mathrm{N}\right) -\eps_n\right|
  &\le&  \zeta_3\left(\widetilde{P^{\ast n}}, \widetilde{\mathrm{B}_{n,\frac12}}\right)
   \,\ \le  \,\  \tfrac16{B(\rho(P))},
\al
and~\eqref{Eq::B-E_normal_opt_for_rho_small} follows using  $\lim_{\rho\rightarrow1}B(\rho)=0$.
\end{proof}

\section*{Acknowledgements}
We thank J\"urgen M\"uller for showing us the limit result in
\cite{Hardy1949} and \cite{Wirths2014} used in the proof of Lemma~\ref{Lem:zeta(1/2)}, and Bero Roos for helpful remarks on an earlier version of the present paper.


\begin{thebibliography}{19}

\bibitem{AngstPoly2017}
\textsc{Angst, J.} and \textsc{Poly, G.} (2017).
A weak Cram\'er condition and application to Edgeworth expansions.
\textit{Electron. J. Probab.} \textbf{22} (59), 1--24.

\bibitem{Arak1981}
\textsc{Arak, T.V.} (1981).
\newblock {On the convergence rate in Kolmogorov's uniform limit theorem. I, II}. \textit{Teor.\ Veroyatn.\ Primen.}  \textbf{26}, 235--245; \textbf{26}, 449--463 (Russian). English transl. \newblock {\em Theory Probab. Appl.} \textbf{26} (2)9--239; \textbf{26} (3), 4351.

\bibitem{Arak1982}
\textsc{Arak, T.V.} (1982).
\newblock {An improvement of the lower bound for the rate of convergence in Kolmogorov's uniform limit theorem}. \textit{Teor.\ Veroyatn.\ Primen.}  \textbf{27}, 767--772  (Russian). English transl. 
\newblock {\em Theory Probab. Appl.} \textbf{27}, 826--832.

\bibitem{Arak_Zaitsev}
\textsc{Arak, T.V.} and \textsc{Zaitsev, A.Yu.} (1986). Uniform Limit Theorems for Sums of Independent Random Variables. \textit{Tr.\ MIAN USSR.}  \textbf{174}, 3--214  (Russian). English transl. \newblock {\em Proc. Steklov Instit. Math.} \textbf{174}, 1--222 (1988).

\bibitem{Bentkus1994}
\textsc{Bentkus, V.} (1994).
On the asymptotical behavior of the constant in the Berry--Esseen inequality.
\textit{J. Theoret. Probab.} \textbf{7}, 211--224.

\bibitem{Bhat_Ranga_Rao}
\textsc{Bhattacharya, R.N.} and \textsc{Ranga Rao, R.} (2010).
\textit{Normal Approximation and Asymptotic Expansion.}
SIAM. [In effect a third edition, extending the ones from 1986 and 1976.]

\bibitem{BorisovPanchenkoSkilyagina1998}
\textsc{Borisov, I.S., Panchenko, D.A.,} and \textsc{Skilyagina, G.I.} (1998).
On minimal smoothness conditions for asymptotic expansions of moments in the CLT.
\textit{Siberian Adv.\ Math.} \textbf{8}, 80--95.


\bibitem{BorisovSkilyagina1996}
\textsc{Borisov, I.S.} and \textsc{Skilyagina, G.I.} (1996).
On asymptotic expansion of the moments of smooth functions in the central limit theorem.
\textit{Sibirsk.\ Mat.\ Zh.}  \textbf{37}, 519--525 (Russian). English transl.  \textit{Sib.\ Math.\ J.} \textbf{37}, 447--453.


\bibitem{Boutsikas2015}
\textsc{Boutsikas, M.} (2015).
\newblock {Penultimate gamma approximation in the CLT for skewed distributions}.
\newblock {\em ESAIM: Probability and Statistics} \textbf{19}, 590--604.

\bibitem{BoutsikasVaggelatou2002}
\textsc{Boutsikas, M.V.} and \textsc{Vaggelatou, E.} (2002).
\newblock {On the distance between convex-ordered random variables, with applications}.
\newblock {\em Advances in Applied Probability} \textbf{34}, 349--374.


\bibitem{Chistyakov1996}
\textsc{Chistyakov, G.P.} (1996).
\newblock {Asymptotically proper constants in the Lyapunov theorem.} 
\textit{Zap.\ Nauchn.\ Sem.\ S.--Peterburg.\ Odtel.\ Mat.\ Inst.\ Steklov. (POMI)} \textbf{228}, 349--355
(Russian). English transl.
\textit{J. Math. Sci.}  \textbf{93}, 480--483 (1999).


\bibitem{Chistyakov2001}
\textsc{Chistyakov, G.P.} (2001, 2001, 2002).
\newblock {A new asymptotic expansion and asymptotically best constants in
  Lyapunov's theorem.~I, II, III.} \textit{Teor.\ Veroyatn.\ Primen.}  \textbf{46}, 326--344; \textbf{46}, 573--579; \textbf{47} (3)5--497  (Russian). English transl. 
\newblock {\em Theory Probab. Appl.} \textbf{46}, 226--242 (2002); \textbf{46}, 516--522 (2002); \textbf{47} (3)5--414 (2003).

\bibitem{Cramer_1928}
\textsc{Cram\'er, H.} (1928). On the composition of elementary errors. First paper: Mathematical deductions.
\textit{Skand.~Aktuarietidskr.} \textbf{11}, 13--74.

\bibitem{DenuitLefevreShaked1998}
\textsc{Denuit, M., Lef\`{e}vre, C.}, and \textsc{Shaked, M.} (1998).
\newblock {The $s$-convex orders among real random variables, with  applications}.
\newblock {\em Math. Inequal. Appl.} \textbf{1}, 585--613. 

\bibitem{DeVore_Lorentz1993}
\textsc{DeVore, R.A.} and \textsc{Lorentz, G.G.} (1993).
\textit{Constructive Approximation.} Springer. 

\bibitem{DiaconisZabell}
\textsc{Diaconis, P.} and \textsc{Zabell, S.} (1991).
Closed form summation for classical distributions:
Variations on a theme of De Moivre. \textit{Statistical Science} \textbf{6},
284--302.

\bibitem{DiestelUhl}
\textsc{Diestel, J.} and \textsc{Uhl, J.J.} (1977). \textit{Vector Measures.}
American Mathematical Society.

\bibitem{DinevMattner}
\textsc{Dinev, T.} and \textsc{Mattner, L.}  (2012).
The asymptotic Berry--Esseen constant for intervals. 
\textit{Teor.\ Veroyatn.\ Primen.} \textbf{57}, 381--325
(English, with Russian summary). 
English transl. \textit{Theory Probab.\ Appl.}  \textbf{57}, 323--325 (2013).

\bibitem{Dudley}
\textsc{Dudley, R.M.} (2003). \textit{Real Analysis and Probability.}
Second Edition. Cambridge University Press.

\bibitem{Esseen1945}
\textsc{Esseen, C.-G.} (1945). 
\newblock {Fourier analysis of distribution functions. A mathematical study of the Laplace--Gaussian law}.
\newblock {\em Acta Math.} \textbf{77}, 1--125.

\bibitem{Esseen1956}
\textsc{Esseen, C.-G.} (1956).
\newblock {A moment inequality with an application to the central limit  theorem}.
\newblock {\em Skand.\ Aktuarietidskr.}  \textbf{39}, 160--170.

\bibitem{Ferguson}
\textsc{Ferguson, T.S.} (1967).
\textit{Mathematical Statistics. A Decision Theoretic Approach.}
Academic Press.

\bibitem{Fisher_Tippett}
\textsc{Fisher, R.A.} and \textsc{Tippett, L.H.C.} (1928).
Limiting forms of the frequency distribution of the largest or smallest member of a sample.
\textit{Proc.\ Cambridge Philosphical Soc.} \textbf{24}, 180--190.

\bibitem{Ghosh1994}
\textsc{Ghosh, J.K.} (1994).
\textit{Higher Order Asymptotics.} NSF-CBMS Regional Conference Series in Probability and Statistics. Vol. 4. Institute of Mathematical Statistics.

\bibitem{GnedenkoKolmogorov1949}
\textsc{Gnedenko, B.V.} and \textsc{Kolmogorov, A.N.} (1968).
\textit{Limit Distributions for Sums of Independent Random Variables.} Addison--Wesley.

\bibitem{GoetzeHipp1978}
\textsc{G\"otze, F.} and \textsc{Hipp, C.} (1978).
Asymptotic expansions in the central limit theorem under moment conditions.
\textit{Z.\ Wahrscheinlichkeitstheorie verw.\ Gebiete} \textbf{42}, 67--87.


\bibitem{Hald2000}
\textsc{Hald, A.} (2000). 
The early history of cumulants and the Gram--Charlier series.
\textit{International Statistical Review} \textbf{68}, 137--153.

\bibitem{Hardy1949}
\textsc{Hardy, G.H.} (1949).
\newblock {\em Divergent Series}.
\newblock Oxford University Press, Oxford.

\bibitem{HippMattner2007}
\textsc{Hipp, C.} and \textsc{Mattner, L.} (2007).
On the normal approximation to symmetric binomial distributions.
\emph{Teor.\ Veroyatn.\ Primen.} \textbf{52}, 610--617 (English, with Russian summary). 
English transl.\ \emph{Theory Probab.\ Appl.}~\textbf{52},  516--523 (2008).

\bibitem{Hiriart-Urruty_Lemarechal}
\textsc{Hiriart-Urruty, J.-B.} and \textsc{Lemar\'echal, C.} (2001).
\textit{Fundamentals of Convex Analysis.} Springer.

\bibitem{Hoeffding1955}
\textsc{Hoeffding, W.} (1955). The extrema of the expected value of a function of independent
random variables. \textit{Ann.\ Math.\ Statist.} \textbf{26}, 268--275.

\bibitem{Hopf1926}
\textsc{Hopf, E.} (1926). \textit{\"Uber die Zusammenh\"ange zwischen gewissen h\"oheren Differenzenquotienten 
reeller Funktionen einer reellen Variablen und deren Differenzierbarkeitseigenschaften.} Norddeutsche Buchdruckerei und Verlagsanstalt A.-G., Berlin.

\bibitem{Ikeda1959}
\textsc{Ikeda, S.} (1959).
A note on the normal approximation to the sum of independent random variables.
\textit{Ann. Inst. Stat. Math.} \textbf{11}, 121--130.

\bibitem{Jiao2012}
\textsc{Jiao, Y.} (2012).
Zero bias transformation and asymptotic expansions.
\textit{Annales de l'Institut Henri Poincar\'e -- Probabilit\'es et Statistiques.} \textbf{48}, 258--281.

\bibitem{Karlin1968}
\textsc{Karlin, S.} (1968).
\textit{Total Positivity.} Stanford University Press, Stanford, California.

\bibitem{Kemperman}
\textsc{Kemperman, J.H.B.} (1968).
The general moment problem, a geometric approach.
\textit{Ann.\ Math.\ Statist.} \textbf{39}, 93--122.

\bibitem{Kuczma2009}
\textsc{Kuczma, M.} (2009).
\textit{An Introduction to the Theory of Functional Equations and Inequalities. Cauchy's Equation and Jensen's Inequality}. 2nd Ed.,  Birkh\"auser.

\bibitem{LedouxTalagrand1991}
\textsc{Ledoux, M.} and \textsc{Talagrand, M.} (1991). \textit{Probability in Banach Spaces.} First Reprint 2002, Springer.

\bibitem{MacWilliams_Sloane}
\textsc{MacWilliams, F.J.} and \textsc{Sloane, N.J.A.} (1977).
\textit{The Theory of Error-Correcting Codes.}
Third printing 1981, North-Holland.

\bibitem{Mattner1999}
\textsc{Mattner, L.} (1999).
What are cumulants?       \textit{Doc.\ Math.} \textbf{4}, 601--622.

\bibitem{Mattner2001}
\textsc{Mattner, L.} (2001).
Complex differentiation under the integral.
\textit{Nieuw Arch.\ Wiskd.~(5)}  \textbf{2}, 32--35.

\bibitem{Mattner2004}
\textsc{Mattner, L.} (2004).
Cumulants are universal homomorphisms into Hausdorff groups.
\textit{Probab.\ Theory Related Fields} \textbf{130}, 151--166.

\bibitem{Mattner_Shevtsova_1}
\textsc{Mattner, L.} and \textsc{Shevtsova, I.G.} (2017).
An optimal Berry--Esseen type inequality for expectations of smooth functions.
\textit{Dokl.\ Akad.\ Nauk}  \textbf{474}, 
535--539  (Russian).
English transl. \textit{Dokl.\ Math.}~\textbf{95}, 
250--253 (2017).


\bibitem{MuellerStoyan2002}
\textsc{M\"uller, A.} and \textsc{Stoyan, D.} (2002).
\textit{Comparison Methods for Stochastic Models and Risks.} Wiley.

\bibitem{Nagaev_Rotar}
\textsc{Nagaev, S.V.} and \textsc{Rotar', V.I.} (1973). 
On strenghtening Lyapunov type estimates (the case when the distribution of the summands is close to the 
normal distribution).
\emph{Teor.\ Veroyatn.\ Primen.} \textbf{18}, 109--121 (Russian, English abstract). 
English transl.\ \emph{Theory Probab.\ Appl.}~\textbf{18},  107--119.

\bibitem{Neiniger_Sulzbach}
\textsc{Neininger, R.} and \textsc{Sulzbach, H.} (2015). On a functional contraction method.
\textit{Ann.\ Probab.} \textbf{43}, 1777--1822.

\bibitem{Sloane2010}
\textit{The On-Line Encyclopedia of Integer Sequences}
(N. J. A.~Sloane, editor, 2010). %Published electronically at 
\texttt{https://oeis.org}

\bibitem{Paditz_1988}
\textsc{Paditz, L.} (1988). 
A non-classical error-estimate in the central limit theorem.
\textit{Math.\ Nachr.} \textbf{136}, 59--68. 

\bibitem{Paulauskas_1969}
\textsc{Paulauskas, V.I.} (1969). On a strenghtening of the theorem of Lyapunov.
\textit{Litovsk.\ Mat.\ Sb.} \textbf{9}, 323--328 (Russian).

\bibitem{Petrov_1995}
\textsc{Petrov, V.V.} (1995).
\textit{Limit Theorems of Probability Theory. Sequences of Independent Random Variables.}
Oxford University Press.

\bibitem{Pinelis_2006}
\textsc{Pinelis, I.} (2016).
On the extreme points of moments sets.
\textit{Math.\ Meth.\ Oper.\ Res.} \textbf{83}, 325--349.

\bibitem{PinkusWulbert2005}
\textsc{Pinkus, A.} and \textsc{Wulbert, D.} (2005).
Extending $n$-convex functions. \textit{Studia Math.} \textbf{171}, 125--152.

\bibitem{Popoviciu1933}
\textsc{Popoviciu, T.} (1933).
\textit{Sur quelques propri\'et\'es des fonctions d'une ou de deux variables r\'eelles}.
Th\'eses. Facult\'e des Sciences de Paris. See also Mathematica (Cluj) \textbf{8}, 1--85 (1934).


\bibitem{Prawitz1975}
\textsc{Prawitz, H.} (1975).
\newblock {On the remainder in the central limit theorem.~I: One-dimensional
  independent variables with finite absolute moments of third order}.
\newblock {\em {Scand. Actuar. J.}} \textbf{3}, 145--156.

\bibitem{Rachev1991}  
\textsc{Rachev, S.T.} (1991). 
\textit{Probability Metrics and the Stability of Stochastic Models.}
Wiley.

\bibitem{Richter}
\textsc{Richter, H.} (1957). Parameterfreie Absch\"atzung und Realisierung von
Erwartungswerten.
\textit{Bl\"atter der deutschen Gesellschaft f\"ur Versicherungsmathematik} \textbf{3}, 147--161.


\bibitem{Rotar1982}
\textsc{Rotar, V.I.} (1982). On summation of independent variables in a non-classical situation.
\textit{Russian Mathematical Surveys} \textbf{37}, 151--175.


\bibitem{Rudin}
\textsc{Rudin, W.} (1987). \textit{Real and Complex Analysis.} Third Edition, McGraw-Hill.

\bibitem{Sazonov_1972}
\textsc{Sazonov, V.V.} (1972).
On a bound of the rate of convergence in the multidimensional central limit theorem.  
In: \textit{Proceedings of the Sixth Berkeley Symposium on Mathematical Statistics and Probability,  Vol.~II}, 
University of California Press, pp.~563--581.

\bibitem{Sazonov_1981}
\textsc{Sazonov, V.V.} (1981). \textit{Normal Approximation -- Some Recent Advances.}
Lecture Notes in Math.~\textbf{879}, Springer.


\bibitem{Schulz2016}
\textsc{Schulz, J.} (2016). 
\textit{The Optimal Berry--Esseen Constant in the Binomial Case.} Dissertation, Universit\"at Trier. 
Available at \texttt{http://ubt.opus.hbz-nrw.de/volltexte/2016/1007/}

\bibitem{Schwartz1991}
\textsc{Schwartz, L.} (1991).
\textit{Analyse I. Th\'eorie des ensembles et topologie.} 
Nouveau tirage f\'evrier 1995, Hermann.

\bibitem{Senatov1998} 
\textsc{Senatov, V.V.}  (1998).
\newblock {\em Normal Approximation: New Results, Methods and Problems}.
\newblock VSP, Utrecht, Netherlands.

\bibitem{ShakedShanthikumar2007}
\textsc{Shaked, M.} and \textsc{Shanthikumar, J.G.} (2007).
\textit{Stochastic Orders.} Springer.

\bibitem{Shevtsova2010TVP}
\textsc{Shevtsova, I.G.} (2010).
On the asymptotically exact constants in the Berry-Esseen-Katz inequality. 
\textit{Teor.\ Veroyatn.\ Primen.}  \textbf{55}, 271--304  (Russian). English transl.~and revision
\newblock {\em Theory Probab. Appl.} \textbf{55}, 225--252 (2011).

\bibitem{Shevtsova2012DAN1}
\textsc{Shevtsova, I.G.} (2012).
\newblock On the accuracy of the normal approximation for sums of independent  random variables. \textit{Dokl.\ Akad.\ Nauk}  \textbf{443}, 555--560  (Russian). English transl. 
\newblock {\em Dokl. Math.} \textbf{85}, 274--278.

\bibitem{Shevtsova2012DAN2}
\textsc{Shevtsova, I.G.} (2012).
\newblock On the accuracy of the normal approximation for sums of independent  symmetric random variables. \textit{Dokl.\ Akad.\ Nauk}  \textbf{443}, 671--676  (Russian). English transl. 
\newblock {\em Dokl. Math.} \textbf{85}, 292--296.

\bibitem{Shevtsova2012Debrecen}
\textsc{Shevtsova, I.} (2012).
\newblock Moment--type estimates with asymptotically optimal structure for the
  accuracy of the normal approximation.
\newblock {\em Annales Mathematicae et Informaticae} \textbf{39}, 241--307.

\bibitem{Shevtsova2012TVP}
\textsc{Shevtsova, I.G.} (2012).
\newblock Moment--type estimates with an improved structure for the accuracy of
  the normal approximation to distributions of sums of independent symmetric
  random variables. \textit{Teor.\ Veroyatn.\ Primen.}  \textbf{57}, 499--532  (Russian). English transl. 
\newblock {\em Theory Probab. Appl.} \textbf{57}, 468--496 (2013).

\bibitem{Shevtsova2014JMAA}
\textsc{Shevtsova, I.} (2014).
\newblock {On the accuracy of the approximation of the complex exponent by the
  first terms of its Taylor expansion with applications}.
\textit{J.\ Math.\ Anal.\  Appl.} \textbf{418}, 185--210.

\bibitem{Shevtsova2016}
\textsc{Shevtsova, I.G.} (2016).
\textit{Accuracy of the Normal Approximation: Methods of Estimation and New Results}. Argamak--Media (Russian).  

\bibitem{Shevtsova2017}
\textsc{Shevtsova, I.G.} (2017). A moment inequality with application to convergence rate estimates in the global CLT for Poisson--binomial random sums. 
\textit{Teor.\ Veroyatn.\ Primen.} \textbf{62}, 345--364  (Russian).

\bibitem{Shiganov_1987}
\textsc{Shiganov, I.S.} (1987).
A note on numerical rate of convergence estimates in central limit theorem.
In: \textit{Problemy Ustoichivosti Stokhasticheskikh Modelei, Trudy Seminara}, pp. 142--149 (Russian). English transl. \textit{J.\ Soviet Math.} \textbf{47} (5),  2810--2816 (1989). 

\bibitem{Tyurin2009arxiv}
\textsc{Tyurin, I.} (2009).
\newblock {New estimates of the convergence rate in the Lyapunov theorem}.
\newblock {\em arXiv:0912.0726}.

\bibitem{Tyurin2009DAN}
\textsc{Tyurin, I.S.} (2009).
\newblock {On the accuracy of the Gaussian approximation}. \textit{Dokl.\ Akad.\ Nauk}  \textbf{429}, 312--316  (Russian). English transl. 
\newblock {\em Dokl. Math.} \textbf{80}, 840--843.

\bibitem{Tyurin2010TVP}
\textsc{Tyurin, I.S.} (2010).
\newblock {On the convergence rate in Lyapunov's theorem}. \textit{Teor.\ Veroyatn.\ Primen.}  \textbf{55}, 250--270 (Russian). English transl. 
\newblock {\em Theory Probab. Appl.} \textbf{55}, 253--270 (2011).

\bibitem{Ulyanov1976}
\textsc{Ul'yanov, V.V.} (1976). A non-uniform estimate for the speed of convergence in the central limit theorem in~$R$.
\textit{Teor.\ Veroyatn.\ Primen.}  \textbf{21}, 280--292 (Russian). English transl. 
\textit{Theory Probab. Appl.} \textbf{21}, 270--282 (1976).

\bibitem{Ulyanov1978}
\textsc{Ul'yanov, V.V.} (1978). On more precise convergence rate estimates in the central limit theorem.
\textit{Teor.\ Veroyatn.\ Primen.}  \textbf{23}, 684--688; correction note  \textbf{24}, 236 (1979)
 (Russian). English transl. 
\textit{Theory Probab. Appl.} \textbf{23}, 660--663; correction note   \textbf{24}, 237.


\bibitem{von_Bahr_1965} 
\textsc{von Bahr, B.} (1965).
On the convergence of moments in the central limit theorem.
\textit{Ann.\ Math.\ Statist.} \textbf{36}, 808--818. 

\bibitem{Winkler}
\textsc{Winkler, G.} (1988).
Extreme points of moment sets.
\textit{Math.\ Oper.\ Res.} \textbf{13}, 581--587.

\bibitem{Wirths2014}
\textsc{Wirths, K.-J.} (2015).
\newblock {Telescoping estimates for smooth series}.
\newblock {\em Canadian Math.\ Bull.} \textbf{58}, 188--195.
% Preprint from 2014 availiable at {\rm http://www.iaa.tu-bs.de/wirths/preprints/tele.pdf}.

\bibitem{Yaroslavtseva2008}
\textsc{Yaroslavtseva, L.S.} (2008).
Nonclassical error bounds for asymptotic expansions in the central limit theorem. \textit{Teor.\ Veroyatn.\ Primen.}  \textbf{53}, 390--393  (Russian). English transl. 
\textit{Theory Probab. Appl.} \textbf{53}, 365--367 (2009).

\bibitem{Zahl1963}
\textsc{Zahl, S.} (1966).
Bounds for the Central Limit Theorem error.
\textit{SIAM J.\ Appl.\ Math.} \textbf{14}, 1225--1245.

\bibitem{Zolotarev1965}
\textsc{Zolotarev, V.M.} (1965).
On the closeness of the distributions of two sums of independent random variables. \textit{Teor.\ Veroyatn.\ Primen.}  \textbf{10}, 519--526  (Russian). English transl. 
\textit{Theory Probab. Appl.} \textbf{10},  472--479 (1965).


\bibitem{Zolotarev1973}
\textsc{Zolotarev, V.M.} (1973).
Exactness of an approximation in the central limit theorem.
\textit{Proceedings of the Second Japan-USSR Symposium on Probability Theory}. In: Lect. Notes Math. \textbf{330}, 531--543.


\bibitem{Zolotarev1986}
\textsc{Zolotarev, V.M.} (1997).
\newblock {\em Modern Theory of Summation of Random Variables}.
\newblock VSP, Utrecht, The Netherlands.

\end{thebibliography}
\end{document}